\newtheorem{lemma}{Lemma}
\newtheorem{proposition}{Proposition}
\theoremstyle{definition}
\newtheorem{definition}{Definition}
\newtheorem{example}{Example}
\newcommand{\sets}{{\rm Set}}											
\newcommand{\op}{{\rm op}}											
\newcommand{\colim}{{\rm colim}}										
\newcommand{\id}{{\rm Id}}											
\newcommand{\projection}{{\pi}}										
\newcommand{\injection}{{\sigma}}										
\newcommand{\yoneda}{{\mathfrak Y}}									
\newcommand{\simplop}{{\Delta_\bullet^{\op}}}								
\newcommand{\closu}[1]{{\overline{#1}}}					
\newcommand{\comthe}{{\mathfrak R_{\rm alg}}}	
\newcommand{\ideal}{{\mathfrak A}}						
\newcommand{\igen}[1]{{(#1)}}							
\newcommand{\czero}{{\mathcal C}}										
\newcommand{\ceory}{{\mathfrak R_\infty}}								
\newcommand{\cinfty}{{C^\infty}}										
\newcommand{\allrings}{{\rm C^\infty R}}									
\newcommand{\embeddings}[1]{{\rm E(#1)}}								
\newcommand{\shpairs}{{{\rm E}(\Pre)}}								
\newcommand{\fgrings}{{\rm C^\infty R_{fg}}}								
\newcommand{\Pre}{{{\underline{\cschemes_{\rm fg}}}}}						
\newcommand{\Shea}{{\widetilde{\cschemes_{\rm fg}}}}					
\newcommand{\cschemes}{{\allrings^{\op}}}								
\newcommand{\fschemes}{{\fgrings^{\op}}}								
\newcommand{\cring}{{A}}												
\newcommand{\kernel}[1]{{{\rm Ker}(#1)}}									
\newcommand{\ctensor}{{\overline{\otimes}}}								
\newcommand{\D}{{$D$}}											
\newcommand{\nradical}[1]{{\sqrt[{\rm nil}]{#1}}}						
\newcommand{\iradical}[1]{{\sqrt[\infty]{#1}}}							
\newcommand{\Jradical}[1]{{\sqrt[{\rm J}]{#1}}}							
\newcommand{\pradical}[1]{{\sqrt[{\mathbb R\rm J}]{#1}}}					
\newcommand{\jradical}[1]{{\sqrt[{\rm j}]{#1}}}							
\newcommand{\gradical}[1]{{\sqrt[{\rm g}]{#1}}}							
\newcommand{\nil}{{\rm nil}}										
\newcommand{\redun}{{{\rm R}_{\nil}}}								
\newcommand{\redui}{{{\rm R}_\infty}}								
\newcommand{\redup}{{{\rm R}_{\rm pt}}}						 		
\newcommand{\nrings}{{\allrings_{\rm nil}}}							
\newcommand{\irings}{{\allrings_{\infty}}}								
\newcommand{\prings}{{\allrings_{\rm pt}}}							
\newcommand{\miderham}{{$\infty$- }}					
\newcommand{\infired}[1]{{#1_{\rm dR_\infty}}}					
\newcommand{\algered}[1]{{#1_{\rm dR}}}					
\newcommand{\diago}[1]{{\Delta}}										
\newcommand{\fornei}[1]{{(#1)^{\nil}}}									
\newcommand{\fornein}[2]{{\llbracket{#1}\rrbracket_{#2}^\tau}}			
\newcommand{\forneired}[2]{{\llbracket{#1}\rrbracket_{#2}^\tau}}				
\newcommand{\infinei}[1]{{(#1)^{\infty}}}									
\newcommand{\infinein}[2]{{(#1)_{#2}^\tau}}							
\newcommand{\infineired}[2]{{(#1)_{#2}^\tau}}								
\newcommand{\infinil}[1]{{\mathfrak I^{\infty}_{#1}}}			
\newcommand{\rinfinil}[2]{{\mathfrak I^{\infty}_{#1,#2}}}		
\newcommand{\fornil}[1]{{\mathfrak I^{\rm alg}_{#1}}}			
\newcommand{\rfornil}[2]{{\mathfrak I^{\rm alg}_{#1,#2}}}		
\newcommand{\charafu}[1]{{\Xi(#1)}}				
\newcommand{\zerovalue}[1]{{\mathfrak m_{#1}}}			
\newcommand{\zerojet}[1]{{\mathfrak m^\infty_{#1}}}			
\newcommand{\zerojetin}[2]{{\mathfrak m^\infty_{#1,#2}}}		
\newcommand{\zerogerm}[1]{{\mathfrak m^{\rm g}_{#1}}}		
\newcommand{\zerogermin}[2]{{\mathfrak m^{\rm g}_{#1,#2}}}	
\newcommand{\comsu}{{\mathfrak A_{\rm cs}}}				
\newcommand{\maxi}{{\mathfrak m}}						
\newcommand{\maxia}[1]{{\nradical{\mathfrak m_{#1}}}}					
\newcommand{\maxii}[1]{{\iradical{\mathfrak m_{#1}}}}						
\newcommand{\zeroset}[1]{{\mathfrak M(#1)}}				
\newcommand{\spec}{{\rm Spec}}						
\newcommand{\cspace}{{\mathcal X}}					
\newcommand{\cmanif}{{\mathcal M}}					
\newcommand{\Jet}[2]{{\llbracket{#1}\rrbracket_{#2}}}		
\newcommand{\Germ}[2]{{(#1)_{#2}}}					
\newcommand{\rzeroset}[1]{{\mathfrak M_\mathbb R(#1)}}	
\newcommand{\vset}[3][0]{{\{#3\,|\,#2(#3)=#1\}}}			
\newcommand{\rpt}{{\rm p}}							
\newcommand{\rqt}{{\rm q}}							
\newcommand{\rjet}[1]{{\rm J^\infty_{#1}}}					
\newcommand{\rgerm}[2]{{{#1}_{#2}}}						
\newcommand{\distance}[2]{{\rm{dist}(#1,#2)}}				
\newcommand{\upperrate}[4][]{{\beta^{#4}_{#1}(#3,#2)}}		
\newcommand{\ball}[2]{{B_{#1,#2}}}						
\newcommand{\decay}[3]{{#2\underset{#1}{\prec}#3}}			
\newcommand{\resh}[1]{{\underline{#1}}}								
\newcommand{\presh}{{\mathcal F}}									
\newcommand{\convo}[2]{{#1*#2}}						
\newcommand{\suppop}[1]{{{\rm supp}(#1)}}						
\newcommand{\blue}{\color{blue}}
\newcommand{\green}{\color{green}}
\newcommand{\hide}[1]{\ifbool{hidedetails}{}{{\blue #1}}}
\newcommand{\erase}[1]{\ifbool{erasevariant}{}{{\green #1}}}
\begin{document}

\title{Beyond perturbation 1: de Rham spaces}
\author{\small\parbox{.5\linewidth}{Dennis Borisov\\ University of Goettingen, Germany\\ Bunsenstr.\@ 3-4, 37073 G\"ottingen\\ dennis.borisov@gmail.com}
\parbox{.5\linewidth}{Kobi Kremnizer\\ University of Oxford, UK\\ Woodstock Rd, Oxford OX2 6GG\\ yakov.kremnitzer@maths.ox.ac.uk}}
\maketitle

\begin{abstract} It is shown that if one uses the notion of $\infty$-nilpotent elements due to Moerdijk and Reyes, instead of the usual definition of nilpotents to define reduced $\cinfty$-schemes, the resulting de Rham spaces are given as quotients by actions of germs of diagonals, instead of the formal neighbourhoods of the diagonals.

\smallskip

\noindent{\bf MSC codes:} 53B99, 53C05

\noindent{\bf Keywords:} Nilpotent elements, de Rham spaces, crystals

\end{abstract}

\tableofcontents

\section{Introduction}

As it is known (\cite{Gr68}), in algebraic geometry one can define connections, differential operators etc.\@ without ever mentioning derivatives or any kind of limiting procedure. One only needs to have the notion of infinitesimal neighbourhoods given by nilpotent elements. 

Looking at what happens when one contracts such neighbourhoods, one arrives at objects that have some prescribed behaviour along these contractions. The results of contractions are usually called de Rham spaces, and the objects that live on them are called crystals. Choosing to work with linear objects (i.e.\@ sheaves of modules) one arrives at \D-modules and linear differential operators (see e.g.\@ \cite{GR14}).

\smallskip

This technique can be applied also to differential geometry, once we use the theory of $\cinfty$-rings to take an algebraic-geometric approach (e.g.\@ \cite{MR91}). However, $\cinfty$-rings are much more than just commutative $\mathbb R$-algebras, and there is more than one way to define crystals in differential geometry because there is more than one notion of nilpotence. 

It was observed in \cite{MR86} that apart from the usual nilpotent elements, $\cinfty$-rings can have $\infty$-nilpotent ones, which are defined as follows: $a\in\cring$ is $\infty$-nilpotent, if the $\cinfty$-ring $\cring\{a^{-1}\}$ obtained by inverting $a$ is $0$. Here it is important that this inverting happens {\it in the category of $\cinfty$-rings}. For example $x\in\cinfty(\mathbb R)/(e^{-\frac{1}{x^2}})$ is $\infty$-nilpotent, but {\it not} nilpotent. Clearly every nilpotent element is also $\infty$-nilpotent.

Using the usual notion of nilpotence one gets de Rham spaces that can be described as quotients by actions of formal neighbourhoods of the diagonals. This is true in both algebraic and differential geometry. In the differential-geometric literature, however, one usually talks about jet bundles, instead of the formal neighbourhood of the diagonal and sheaves of modules over such neighbourhoods. The difference is in name only.

\smallskip

Something completely different happens when we apply de Rham space formalism to contracting $\infty$-nilpotent neighbourhoods. Instead of quotients by actions of formal neighbourhoods of diagonals we get quotients by actions of {\it germs} of diagonals. As in the formal case, these $\cinfty$-rings of germs carry a linear topology given by order of vanishing at the diagonal. Since in differential geometry there are many more orders of vanishing than just the finite ones, the infinitesimal theory we get here is much richer.

An immediate benefit of this richer theory is having many more differential operators, than just the polynomial ones. Another consequence becomes apparent when we consider the opposite procedure -- summation. Instead of the usual Taylor series, that characterize behavior of functions in relation to algebraic monomials, we need to work with functions having arbitrary vanishing properties, i.e.\@ we need to go to trans-series (e.g.\@ \cite{vanderHoeven06}) and beyond.

Consider the deformation theory one gets from this: in addition to specifying behavior of functors with respect to nilpotent extensions, we should consider also $\infty$-nilpotent extensions. Since all $\infty$-nilpotent extensions add up to germs, the deformation theory necessarily goes beyond perturbations. In turn this leads to a completely new definition of derived geometry. Differential graded manifolds or simplicial $\cinfty$-rings will not suffice anymore, since decomposition according to degree/simplicial dimension reflects decomposition according to finite orders of vanishing. 

The present paper is the first in a series examining this rich infinitesimal theory and possibly some of its applications.

\bigskip

Here is the contents of the paper:

\smallskip

In Section \ref{SixRadicals} we consider 6 different radicals of ideals in $\cinfty$-rings. Three of them are well known in commutative algebra (nilradical, Jacobson radical, and intersection of all maximal ideals having $\mathbb R$ as the residue field), while another two come from considering different Grothendieck topologies on the category of $\cinfty$-spaces. The main actor of this paper -- the $\infty$-radical -- is specific to the $\cinfty$-algebra.

In Section \ref{ThreeReductions} we prove that just as nilradical satisfies the strong functoriality property, so does the $\infty$-radical. We give proofs for some useful facts relating to algebra of these radicals, preparing the ground for de Rham spaces.

Before we can define de Rham spaces for both nilpotent and $\infty$-reductions we introduce regularity conditions in terms of injectivity with respect to the reduction functors. This is done in Section \ref{RegularityConditions}.

Then in Section \ref{DeRhamSpaces} we define the two kinds of de Rham spaces. Here we also prove the central result that these de Rham spaces can be built using certain neighbourhoods of the diagonals. The main ingredient in the proof is the strong functoriality from Section \ref{ThreeReductions}.

Section \ref{Reformulation} is dedicated to presenting these neighbourhoods of the diagonals as spectra of $\cinfty$-rings equipped with linear topologies. In the nilpotent case these spectra are just the usual formal neighbourhoods, while in the $\infty$-case they are the germs of diagonals.

De Rham groupoids appear in Section \ref{DeRhamGroupoids}. We show in particular that both in the nilpotent and the $\infty$- cases the de Rham spaces we construct are weakly equivalent (as simplicial sheaves) to the nerves of the corresponding de Rham groupoids. We show that often (e.g.\@ for all manifolds) these nerves consist of the formal neighbourhoods and respectively of the germs of diagonals in all cartesian powers.

Finally in Section \ref{DifferentialOperators} we look at the differential operators one obtains from $\infty$- de Rham groupoids. We postpone a detailed analysis to another paper, but we do show that these operators go beyond perturbation, i.e.\@ they provide infinitesimal description of deformations of the identity morphism on a manifold whose infinite jets at the identity vanish.

\bigskip

{\Large Acknowledgements}

\smallskip

Part of this work was done when the first author was visiting Max-Planck Institut f\"ur Mathematik in Bonn, several short visits to Oxford were also very helpful. Hospitality and financial support from both institutions are greatly appreciated. This work was presented in a series of talks in G\"ottingen, and the first author is grateful for the comments from C.Zhu and V.Pidstrygach.

\section{Radicals and reductions in $\cinfty$-algebra}

Let $\allrings$ be the category of $\cinfty$-rings, we denote by $\fgrings\subset\allrings$ the full subcategory of finitely generated $\cinfty$-rings. By definition (e.g.\@ \cite{MR91} \S I.1) $\allrings$ is the category of product preserving functors $\ceory\rightarrow\sets$, where $\ceory$ is the algebraic theory of $\cinfty$-rings, i.e.\@ it is the category having $\{\mathbb R^n\}_{n\in\mathbb Z_{\geq 0}}$ as objects and $\cinfty$-maps as morphisms. Objects of $\fgrings$ are quotients of $\{\cinfty(\mathbb R^n)\}_{n\geq 0}$ by ideals (\cite{MR91}, \S I.5). 

Geometric constructions are usually performed in $\fgrings$ (e.g.\@ \cite{MR91}), with infinitely generated $\cinfty$-rings recovered as ${\rm Ind}$-objects in $\fgrings$. However, sometimes it is necessary to consider infinitely generated $\cinfty$-rings directly. For example, there are many ideals in $\cinfty(\mathbb R^n)$, that are not finitely generated, consequently simplicial resolutions of $\cinfty$-rings often have components that are not finitely generated. This becomes important, for example, in \cite{QuasiCoherent}. In this paper we try to work in all of $\allrings$, and switch to $\fgrings$ only when necessary.

The category $\allrings$ is both complete and co-complete (e.g.\@ \cite{ARV11} Cor.\@ 1.22, Thm.\@ 4.5). We denote the coproduct in this category by $\ctensor$. For a $\cinfty$-ring $\cring$ we denote by $\spec(\cring)$ the corresponding object in the opposite category $\cschemes$. And given $\cspace\in\cschemes$ we write $\cinfty(\cspace)$ for the corresponding object of $\allrings$. For an arbitrary $\cspace\in\cschemes$ we denote by $\resh{\cspace}$ the representable pre-sheaf $\hom_{\cschemes}(-,\cspace)\colon\cschemes\rightarrow\sets$.

\subsection{Six radicals}\label{SixRadicals}

Let $\cring\in\allrings$ be a $\cinfty$-ring, as $\ceory$ contains the theory $\comthe$ of commutative, associative, unital $\mathbb R$-algebras, every $\ceory$-congruence on $\cring$ is given by an ideal of the underlying commutative $\mathbb R$-algebra. The converse is also true (e.g.\@ \cite{MR91}, prop.\@ I.1.2) and we can identify $\ceory$-congruences with ideals. There are several notions of radical ideals in this context, we consider $6$ of them. 

\begin{definition} Let $\ideal\leq\cring$ be an ideal\begin{itemize}
\item[1.] {\it the nilradical of $\ideal$} is $\nradical\ideal:=\{f\in\cring\,|\,\exists k\in\mathbb Z_{>0}\text{ s.t.\@ }f^k\in\ideal\}$,
\item[2.] {\it the $\infty$-radical of $\ideal$} is $\iradical\ideal:=\{f\in\cring\,|\,(\cring/\ideal)\{f^{-1}\}\cong \{0\}\}$,
\item[3.] {\it the Jacobson radical of $\ideal$} is $\Jradical{\ideal}:=\underset{\zeroset{\ideal}}\bigcap\maxi$, where $\zeroset{\ideal}$ is the set of maximal ideals containing $\ideal$,
\item[4.] {\it the $\mathbb R$-Jacobson radical of $\ideal$} is $\pradical{\ideal}:=\underset{\rzeroset{\ideal}}{\bigcap}\maxi$, where $\rzeroset{\ideal}\subseteq\zeroset{\ideal}$ consists of ideals with $\mathbb R$ as the residue field,
\item[5.] {\it the $\mathbb R$-jet radical of $\ideal$} is $\jradical{\ideal}:=\{f\in\cring\,|\,\forall\rpt\colon\cring\rightarrow\mathbb R\,\exists g\in\ideal\text{ s.t.\@ }\rjet{\rpt}(f)=\rjet{\rpt}(g)\}$,
where $\rjet{\rpt}(g)$ is the infinite jet of $g$ at $\rpt$,
\item[6.] {\it the $\mathbb R$-germ radical of $\ideal$} is $\gradical{\ideal}:=\{f\in\cring\,|\,\forall\rpt\colon\cring\rightarrow\mathbb R\,\exists g\in\ideal\text{ s.t.\@ }\rgerm{f}{\rpt}=\rgerm{g}{\rpt}\}$, where $\rgerm{g}{\rpt}$ is the germ of $g$ at $\rpt$.
\end{itemize}\end{definition}
The nilradical, Jacobson and $\mathbb R$-Jacobson radicals are borrowings from commutative algebra. The $\mathbb R$-jet and $\mathbb R$-germ radicals appear in defining various Grothendieck topologies on $\fschemes$ (e.g.\@ \cite{MR91} \S III) and natural topologies on $\cinfty$-rings (\cite{Bo15}). We will be mostly concerned with the $\infty$-radical, that was introduced in \cite{MR86} \S2. First we need to compare these radicals.

\begin{proposition}\label{FourInclusions} For any $\cring$, $\ideal$ as above there is a sequence of inclusions
	\begin{equation*}\nradical{\ideal}\subseteq\iradical{\ideal}\subseteq\Jradical{\ideal}\subseteq\pradical{\ideal},\end{equation*}
and each one of these inclusions can be strict.\end{proposition}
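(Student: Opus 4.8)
\emph{Proof sketch (plan).} The plan is to dispose of the three inclusions quickly — all of them are formal — and to put the real work into the three ``can be strict'' assertions, each of which I would settle by an explicit example. Throughout I use only the universal property of the $\cinfty$-localization $(-)\{f^{-1}\}$: any $\cinfty$-homomorphism out of a $\cinfty$-ring $B$ that sends $f$ to a unit factors through $B\{f^{-1}\}$, and there is a $\cinfty$-homomorphism $\{0\}\to R$ precisely when $R=\{0\}$.

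\emph{The inclusions.} For $\nradical{\ideal}\subseteq\iradical{\ideal}$: if $f^k\in\ideal$ then $f$ is nilpotent in $\cring/\ideal$, so in $(\cring/\ideal)\{f^{-1}\}$ its image is simultaneously nilpotent and invertible, giving $1=(f^{-1})^kf^k=0$, hence $(\cring/\ideal)\{f^{-1}\}=\{0\}$ and $f\in\iradical{\ideal}$. For $\iradical{\ideal}\subseteq\Jradical{\ideal}$: take $f\in\iradical{\ideal}$ and $\maxi\in\zeroset{\ideal}$, and suppose $f\notin\maxi$. Since $\maxi$ is a maximal ideal containing $\ideal$, the underlying commutative ring of $\cring/\maxi$ is a field, so the image of $f$ there is a unit; hence the canonical surjection $\cring/\ideal\twoheadrightarrow\cring/\maxi$ sends $f$ to a unit and so factors through $(\cring/\ideal)\{f^{-1}\}=\{0\}$, forcing $\cring/\maxi=\{0\}$ and contradicting properness of $\maxi$. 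Thus $f\in\maxi$ for every such $\maxi$, i.e.\@ $f\in\Jradical{\ideal}$. Finally $\Jradical{\ideal}\subseteq\pradical{\ideal}$ is immediate, since $\rzeroset{\ideal}\subseteq\zeroset{\ideal}$ and intersecting a family of ideals over a smaller subfamily only enlarges the result.

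\emph{Strictness.} For the first inclusion, take $\cring=\cinfty(\mathbb R)/(e^{-1/x^2})$ and $\ideal=(0)$: here $x\notin\nradical{(0)}$, because an identity $x^k=g\cdot e^{-1/x^2}$ in $\cinfty(\mathbb R)$ is impossible (the right-hand side is flat at $0$, the left-hand side is not), whereas $x\in\iradical{(0)}$, because $\cring\{x^{-1}\}$ is obtained from $\cinfty(\mathbb R)\{x^{-1}\}\cong\cinfty(\mathbb R\setminus\{0\})$ by killing $e^{-1/x^2}$, which is a unit there, so $\cring\{x^{-1}\}=\{0\}$. For the second inclusion, take $\cring$ to be the $\cinfty$-ring of germs at $0\in\mathbb R$ and $\ideal=(0)$: being a local $\cinfty$-ring, its $\Jradical{(0)}$ is the maximal ideal of germs vanishing at $0$, which is nonzero; on the other hand $\iradical{(0)}=(0)$, because for a nonzero germ $[f]$ the localization $\cring\{[f]^{-1}\}$ is the filtered colimit of the nonzero rings $\cinfty(\{f\neq0\}\cap U)$ over shrinking neighbourhoods $U\ni0$ (the restriction maps preserving $1\neq0$), since $\{f\neq0\}$ meets every neighbourhood of $0$. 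For the third inclusion, take for $\cring$ a nonzero $\cinfty$-ring with no $\mathbb R$-points — for instance $\cinfty(\mathbb R)$ modulo the ideal of compactly supported functions, which admits no $\cinfty$-homomorphism to $\mathbb R$ because such homomorphisms are point evaluations and no point evaluation kills all bump functions — with $\ideal=(0)$: then $\rzeroset{(0)}=\emptyset$, so $\pradical{(0)}=\cring$, while $\cring$, being nonzero, has a maximal ideal (every ideal being a $\cinfty$-congruence), so $\Jradical{(0)}$ is contained in a proper ideal.

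\emph{Main difficulty.} None of the inclusions is hard; the care goes into the examples. The load-bearing facts there are the standard identification $\cinfty(M)\{g^{-1}\}\cong\cinfty(\{g\neq0\})$ and the computation of filtered colimits of $\cinfty$-rings on underlying sets, together with flatness of $e^{-1/x^2}$ at the origin and Milnor's description of $\cinfty$-homomorphisms to $\mathbb R$. I expect the verification that the germ ring is $\infty$-reduced, in the second example, to be the most delicate point.
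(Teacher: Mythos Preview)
Your proof is correct and follows essentially the same route as the paper: the same contrapositive for $\iradical{\ideal}\subseteq\Jradical{\ideal}$, and the same three examples (modulo passing to the quotient and taking $\ideal=0$). The only variations are cosmetic --- for the first strictness you use the principal ideal $(e^{-1/x^2})$ where the paper uses the full ideal $\zerojet{0}$ of flat functions, and for the second you argue $\infty$-reducedness of the germ ring directly via a filtered-colimit computation where the paper simply cites Lemma~2.2 of Moerdijk--Reyes.
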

\begin{proof} The only inclusion that is not obvious is $\iradical{\ideal}\subseteq\Jradical{\ideal}$. Let $\alpha\in\cring$, and suppose there is $\maxi\in\zeroset{\ideal}$, s.t.\@ $\alpha\notin\maxi$. Then $[\alpha]\in\cring/\maxi$ is invertible, and hence $(\cring/\ideal)\{\alpha^{-1}\}$ cannot be trivial, i.e.\@ $\alpha\notin\iradical{\ideal}$.

\smallskip

In the following examples we put $\cring:=\cinfty(\mathbb R)$.\begin{itemize}

\item[1.] Let $\zerojet{0}\subset\cinfty(\mathbb R)$ consist of functions having $0$-jet at $0\in\mathbb R$. Then $\cinfty(\mathbb R)/\zerojet{0}\cong\mathbb R[[x]]$ (Borel lemma), and hence $\nradical{\zerojet{0}}=\zerojet{0}$. On the other hand the filter $\{V\subseteq\mathbb R\,|\,\exists f\in\zerojet{0},\, V=\vset{f}{\rpt}\}$ coincides with the filter of closed subsets of $\mathbb R$ containing $0$. Therefore from Lemma 2.2 in \cite{MR86} we conclude that $\iradical{\zerojet{0}}=\zerovalue{0}$, the ideal of functions vanishing at $0$. So $\nradical{\zerojet{0}}\subsetneq\iradical{\zerojet{0}}$.

\item[2.] Let $\zerogerm{0}\subset\cinfty(\mathbb R)$ consist of functions having $0$-germ at $0\in\mathbb R$. According to \cite{MR86}, page 329 the ring $\cinfty(\mathbb R)/\zerogerm{0}$ is local, and hence $\Jradical{\zerogerm{0}}=\zerovalue{0}$. On the other hand, using Lemma 2.2 from \cite{MR86} we see that $\iradical{\zerogerm{0}}=\zerogerm{0}$. So $\iradical{\zerogerm{0}}\subsetneq\Jradical{\zerogerm{0}}$.

\item[3.] Let $\comsu\subset\cinfty(\mathbb R)$ consist of functions having compact support. Clearly this is a proper ideal of $\cinfty(\mathbb R)$, and hence $\Jradical{\comsu}\neq\cinfty(\mathbb R)$. It is also clear that $\rzeroset{\comsu}=\emptyset$, i.e.\@ $\pradical{\comsu}=\cinfty(\mathbb R)$. So $\Jradical{\comsu}\subsetneq\pradical{\comsu}$.
\end{itemize}\end{proof}%

The other two radicals do not fit in this chain of inclusions. It is clear that we always have $\gradical{\ideal}\subseteq\jradical{\ideal}\subseteq\pradical{\ideal}$, and these inclusions can be strict (e.g.\@ applied to $\zerogerm{0}\subset\cinfty(\mathbb R)$). The following examples show that this is the most we can say in the general situation.

\begin{example}\label{NoFit} Consider $\igen{x^2}\subset\cinfty(\mathbb R)$. Clearly $\nradical{\igen{x^2}}=\zerovalue{0}$, while $\jradical{\igen{x^2}}=\igen{x^2}$. Thus $\jradical{\igen{x^2}}\subsetneq\nradical{\igen{x^2}}$.

Consider again the ideal $\comsu\subset\cinfty(\mathbb R)$ of functions with compact support. It is easy to see that $\gradical{\comsu}=\cinfty(\mathbb R)$, while as before $\Jradical{\comsu}\neq\cinfty(\mathbb R)$. Thus $\Jradical{\comsu}\subsetneq\gradical{\comsu}$.\end{example}
In summary we have the following relationship between the six radicals where each inclusion can be strict:
	\begin{equation}\nradical{\ideal}\subseteq\iradical{\ideal}\subseteq\Jradical{\ideal}\subseteq\pradical{\ideal}
	\supseteq\jradical{\ideal}\supseteq\gradical{\ideal}.\end{equation}
We finish this section with a useful characterization of \miderham\@radicals. Lemma 2.2 in \cite{MR86} gives an alternative to the definition: for any (finite or infinite) set $S$ $f\in\iradical{\ideal}\leq\cinfty(\mathbb R^S)$, iff $\exists g\in\ideal$, s.t.\@ $f$ and $g$ have the same sets of zeroes in $\mathbb R^S$. In addition to this reformulation we would like to have a generalization of the nilpotence condition. For any $\cinfty$-ring $\cring$ consider the $\cinfty$-ring $\cring\ctensor\cinfty(\mathbb R)$ obtained by freely adjoining a variable $x$. Substituting elements of $\cring$ instead of $x$ we have a morphism of $\cinfty$-rings
	\begin{equation*}\xymatrix{\cring\times(\cring\ctensor\cinfty(\mathbb R))\ar[r] & \cring,}\end{equation*}
which we denote by $(g,f)\mapsto f(g)$ for $f\in\cring\ctensor\cinfty(\mathbb R)$, $g\in\cring$. For example, choosing $f:=x^n\in\cring\ctensor\cinfty(\mathbb R)$ for some $n\in\mathbb N$, we have $f(g)=g^n$. In terms of spectra the operation $(g,f)\mapsto f(g)$ is the composition $\spec(\cring)\overset{\Gamma_g}\longrightarrow\spec(\cring)\times\mathbb R\overset{f}\longrightarrow\mathbb R$,
where $\Gamma_g$ is the graph of $g$.

\begin{definition} Let $\cring\in\allrings$, an element $g\in\cring$ is {\it $\infty$-nilpotent}, if there is $f\in\cring\ctensor\cinfty(\mathbb R)$, s.t.\@ $f(g)=0$ and $f$ becomes invertible in $\cring\ctensor\cinfty(\mathbb R\setminus\{0\})$.\end{definition}
This notion has an obvious extension to non-trivial ideals: let $\ideal\leq\cring$ be any ideal, {\it $g\in\cring$ is $\infty$-nilpotent relative to $\ideal$}, if there is $f\in\cring\ctensor\cinfty(\mathbb R)$, s.t.\@ $f(g)\in\ideal$ and $f$ becomes invertible in $(\cring/\ideal)\ctensor\cinfty(\mathbb R\setminus\{0\})$. However, it brings nothing new: let $\overline{\cring}:=\cring/\ideal$, denote the projection $\phi\colon\cring\rightarrow\overline{\cring}$, we have a commutative diagram in $\cschemes$:
	\begin{equation*}\xymatrix{\spec(\overline{\cring})\ar[rr]^{\Gamma_{\phi(g)}}\ar[d]_{\Phi} && 
	\spec(\overline{\cring})\times\mathbb R\ar[rr]^{\quad f\circ(\Phi\times\id_{\mathbb R})}\ar[d]_{\Phi\times\id_{\mathbb R}} && \mathbb R\ar@{-}[d]^=\\
	\spec(\cring)\ar[rr]_{\Gamma_{g}} && \spec(\cring)\times\mathbb R\ar[rr]_{\quad f} && \mathbb R,}\end{equation*}
where $\Phi$ corresponds to $\phi$. Then $g$ is $\infty$-nilpotent relative to $\ideal$, if $f\circ\Gamma_g\circ\Phi=0$. Since $\phi$ is surjective and $\cinfty(\mathbb R)$ is free, it is clear that this notion is equivalent to $\infty$-nilpotence of $\phi(g)$ in $\overline{\cring}$.

One can say that for such $g$ the ``degree of nilpotence'' is {\it at most} $f$, meaning the rate of vanishing of $f$ at $\spec(\cring)$. Notice that this ``degree of nilpotence'' is not only possibly infinite, but can also vary on $\spec(\cring)$.

\begin{proposition}\label{InfiniteNilpotence} Let $\cring\in\allrings$, and let $\ideal\leq\cring$ be any ideal. Then
	\begin{equation*}g\in\iradical{\ideal}\Longleftrightarrow\text{the class of }g\text{ in }\cring/\ideal\text{ is }\infty\text{-nilpotent}.\end{equation*}
\end{proposition}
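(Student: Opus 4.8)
The plan is to reduce immediately to the case of the zero ideal and then prove the equivalence by pinning down the $\cinfty$-localization $\cring\{g^{-1}\}$. For the reduction: both sides of the asserted equivalence depend only on the $\cinfty$-ring $\cring/\ideal$ and on the class $[g]$ of $g$ in it. Indeed $(\cring/\ideal)\{g^{-1}\}$ is by definition the $\cinfty$-localization of $\cring/\ideal$ at $[g]$, so $g\in\iradical{\ideal}$ if and only if $[g]$ lies in the $\infty$-radical of the zero ideal of $\cring/\ideal$; and the right-hand side is already a statement about $[g]$. Hence it suffices to prove, for an arbitrary $\cinfty$-ring $\cring$ and $g\in\cring$, that $\cring\{g^{-1}\}\cong\{0\}$ if and only if $g$ is $\infty$-nilpotent.

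The crux is an explicit identification of $\cring\{g^{-1}\}$. Let $B:=\cring\ctensor\cinfty(\mathbb R)$ be the $\cinfty$-ring obtained by freely adjoining a variable $x$. I would use three standard facts about $\cinfty$-rings: (i) $\cinfty(\mathbb R\setminus\{0\})=\cinfty(\mathbb R)\{x^{-1}\}$, so that $B\{x^{-1}\}\cong\cring\ctensor\cinfty(\mathbb R\setminus\{0\})$ ($\cinfty$-localization commutes with the coproduct $\ctensor$); (ii) the substitution homomorphism $B\rightarrow\cring$, $f\mapsto f(g)$, is the quotient of $B$ by the ideal $(x-g)$ -- geometrically, the graph $\Gamma_g$ identifies $\spec(\cring)$ with $\spec(B/(x-g))$; and (iii) $\cinfty$-localization commutes with quotients. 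Localizing the isomorphism $B/(x-g)\cong\cring$ at the image of $x$, namely $g$, then gives $\cring\{g^{-1}\}\cong B\{x^{-1}\}/(x-g)\cong\big(\cring\ctensor\cinfty(\mathbb R\setminus\{0\})\big)/(x-g)$. Since a quotient $R/(a)$ is trivial exactly when $a$ is invertible in $R$, this yields the reformulation: $\cring\{g^{-1}\}\cong\{0\}$ if and only if $x-g$ is invertible in $\cring\ctensor\cinfty(\mathbb R\setminus\{0\})$.

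Both implications then follow quickly. For $(\Leftarrow)$: if $f\in B$ witnesses that $g$ is $\infty$-nilpotent, then $f(g)=0$ forces $f\in(x-g)$, say $f=(x-g)h$, and invertibility of $f$ in $B\{x^{-1}\}=\cring\ctensor\cinfty(\mathbb R\setminus\{0\})$ forces invertibility of the factor $x-g$, whence $\cring\{g^{-1}\}\cong\{0\}$. For $(\Rightarrow)$: if $\cring\{g^{-1}\}\cong\{0\}$, then $x-g$ is invertible in $\cring\ctensor\cinfty(\mathbb R\setminus\{0\})$; taking $f:=x-g\in\cring\ctensor\cinfty(\mathbb R)$ we have $f(g)=g-g=0$, so $f$ is the required witness. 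In other words, $x-g$ plays the role of a universal $\infty$-nilpotence datum for $g$.

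I do not expect a serious obstacle. The only non-formal ingredient is the $\cinfty$-localization dictionary, and within it the point meriting care is (ii): that inverting $g$ in $\cring$ coincides, along the graph $\{x=g\}$, with inverting the free coordinate $x$. I would establish this through the universal property of $\cinfty$-localization rather than any point-set argument. I also note that the triviality criterion used above is purely ring-theoretic ($R/(a)\cong\{0\}$ iff $a$ is a unit), so no ``a nowhere-vanishing function is invertible'' principle enters; once the identification of $\cring\{g^{-1}\}$ is in hand, the proposition is essentially a tautology with $x-g$ as the distinguished witness.
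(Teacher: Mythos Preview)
Your proof is correct and follows essentially the same route as the paper: reduce to $\ideal=0$, identify $\cring\{g^{-1}\}$ with $(\cring\ctensor\cinfty(\mathbb R))\{x^{-1}\}/(x-g)$ via the graph of $g$, and translate triviality of this quotient into invertibility in $\cring\ctensor\cinfty(\mathbb R\setminus\{0\})$. The only cosmetic difference is that where the paper invokes the Moerdijk--Reyes criterion (``some $f\in(g-x)$ becomes invertible after inverting $x$''), you use the elementary fact that $R/(a)=0$ iff $a$ is a unit and thereby single out $f=x-g$ as the canonical witness.
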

\begin{proof} First we bring everything to $\cring/\ideal$ using the following simple lemma.
\begin{lemma}\label{SurjectiveFunctoriality} Let $\phi\colon\cring_1\rightarrow\cring_2$ be a surjective morphism in $\allrings$, and let $\ideal\leq\cring_2$ be an ideal. Then $\phi^{-1}(\iradical{\ideal})=\iradical{\phi^{-1}(\ideal)}$.\end{lemma}
\begin{proof} For any $a\in\cring_1$ consider the commutative triangle of $\cinfty$-morphisms
	\begin{equation*}\xymatrix{& \cring_1\ar[ld]\ar[rd] &\\ (\cring_1/\phi^{-1}(\ideal))\{a^{-1}\}\ar[rr] && (\cring_2/\ideal)\{(\phi(a)^{-1}\}.}\end{equation*}
Both arrows out of $\cring_1$ are initial among those $\cinfty$-morphisms that kill $\phi^{-1}(\ideal)$ and invert $a$. Therefore the horizontal arrow must be an isomorphism. Hence $a\in\iradical{\phi^{-1}(\ideal)}\Leftrightarrow\phi(a)\in\iradical{\ideal}\Leftrightarrow a\in\phi^{-1}(\iradical{\ideal})$.\end{proof}

\noindent Now we can assume $\ideal=0$. Let $x$ be a generator of $\cinfty(\mathbb R)$, $\forall g\in\cring$ we have
	\begin{equation*}\cring\{g^{-1}\}\cong((\cring\ctensor\cinfty(\mathbb R))/\igen{g-x})\{x^{-1}\}.\end{equation*}
Therefore $g\in\iradical{0}\Leftrightarrow x\in\iradical{\igen{g-x}}$. The latter is equivalent to existence of some $f\in\igen{g-x}$, that becomes invertible in $(\cring\ctensor\cinfty(\mathbb R))\{x^{-1}\}$ (\cite{MR86} page 329). Composing $\spec(\cring)\overset{\Gamma_g}\longrightarrow\spec(\cring)\times\mathbb R\overset{f}\longrightarrow\mathbb R$ we get $f(g)=0$.
Conversely, from $f(g)=0$ we get $f\in\igen{g-x}$, as $f(g)=0$ means $f$ has to be in the ideal of the graph of $g$, which is $\igen{g-x}$.\end{proof}%

\subsection{Three reductions}\label{ThreeReductions}

To define reductions we need to investigate functoriality properties of the radicals. We start with the following elementary lemma.

\begin{lemma}\label{WeakFunctoriality} Let $\phi\colon\cring_1\rightarrow\cring_2$ be a morphism in $\allrings$, and let $\ideal_1\leq\cring_1$, $\ideal_2\leq\cring_2$ be any ideals, s.t.\@ $\phi(\ideal_1)\subseteq\ideal_2$. Then
	\begin{equation*}\nradical{\ideal_1}\subseteq\phi^{-1}(\nradical{\ideal_2}),\quad\iradical{\ideal_1}\subseteq\phi^{-1}(\iradical{\ideal_2}),\quad
	\pradical{\ideal_1}\subseteq\phi^{-1}(\pradical{\ideal_2}).\end{equation*} 
\end{lemma}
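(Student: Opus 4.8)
The plan is to handle the three inclusions separately, since each radical has a different flavour, but in all three cases the strategy is the same: take $f\in(\text{radical of }\ideal_1)$, and verify directly that $\phi(f)$ satisfies the defining property of the corresponding radical of $\ideal_2$, using only the hypothesis $\phi(\ideal_1)\subseteq\ideal_2$ and the fact that $\phi$ is a $\cinfty$-ring homomorphism.

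For the nilradical this is immediate: if $f^k\in\ideal_1$ for some $k>0$, then $\phi(f)^k=\phi(f^k)\in\phi(\ideal_1)\subseteq\ideal_2$, so $\phi(f)\in\nradical{\ideal_2}$, i.e.\@ $f\in\phi^{-1}(\nradical{\ideal_2})$. For the $\infty$-radical I would use Proposition \ref{InfiniteNilpotence} together with the surjective-quotient reduction already available from Lemma \ref{SurjectiveFunctoriality}: composing $\phi$ with the quotient maps, the hypothesis $\phi(\ideal_1)\subseteq\ideal_2$ gives an induced $\cinfty$-morphism $\bar\phi\colon\cring_1/\ideal_1\rightarrow\cring_2/\ideal_2$, and the universal property of $\cring\mapsto\cring\{g^{-1}\}$ in $\allrings$ shows that if $(\cring_1/\ideal_1)\{f^{-1}\}\cong 0$ then applying $-\{\,\cdot\,^{-1}\}$ functorially forces $(\cring_2/\ideal_2)\{\phi(f)^{-1}\}\cong 0$ as well (the zero $\cinfty$-ring maps only to the zero $\cinfty$-ring). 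Concretely: a witness $f'\in\cring_1\ctensor\cinfty(\mathbb R)$ with $f'(f)\in\ideal_1$ and $f'$ invertible in $(\cring_1/\ideal_1)\ctensor\cinfty(\mathbb R\setminus\{0\})$ pushes forward along $\phi\ctensor\id$ to a witness for $\phi(f)$ relative to $\ideal_2$, since $\phi\ctensor\id$ sends $f'(f)$ to $(\phi\ctensor\id)(f')(\phi(f))$, this lands in $\ideal_2$, and invertibility is preserved by any ring map. For the $\mathbb R$-Jacobson radical: if $\maxi\in\rzeroset{\ideal_2}$ is a maximal ideal of $\cring_2$ with residue field $\mathbb R$ and containing $\ideal_2$, then $\phi^{-1}(\maxi)$ is a proper ideal of $\cring_1$ with $\cring_1/\phi^{-1}(\maxi)\hookrightarrow\cring_2/\maxi=\mathbb R$; being a subring of $\mathbb R$ containing $\mathbb R$ (the image of scalars), it equals $\mathbb R$, so $\phi^{-1}(\maxi)$ is maximal with residue field $\mathbb R$, and it contains $\phi^{-1}(\ideal_2)\supseteq\ideal_1$, hence $\phi^{-1}(\maxi)\in\rzeroset{\ideal_1}$. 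Therefore any $f\in\pradical{\ideal_1}$ lies in $\phi^{-1}(\maxi)$, i.e.\@ $\phi(f)\in\maxi$; intersecting over all such $\maxi$ gives $\phi(f)\in\pradical{\ideal_2}$.

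The step I expect to need the most care is the $\infty$-radical inclusion, because it rests on the universal property of $\cinfty$-localization in $\allrings$ rather than on an elementary algebraic manipulation, and one must be careful that $\cring\ctensor\cinfty(\mathbb R)$ (free adjunction of a variable) and $\cring\{g^{-1}\}$ behave functorially in $\cring$ — which they do, being left adjoints / defined by universal properties — so the argument is clean once phrased correctly. The $\mathbb R$-Jacobson step has one subtle point, namely checking that the residue field stays exactly $\mathbb R$ (not a larger field) under pullback, which is why I restrict attention to $\rzeroset{}$ rather than $\zeroset{}$; this is exactly the place where the analogous claim for $\Jradical{}$ would fail, and that is consistent with $\Jradical{}$ being absent from the statement.
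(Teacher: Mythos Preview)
Your proof is correct, and the overall strategy---verify each inclusion separately by pushing the defining condition of the radical along $\phi$---is the same as the paper's. The tactical choices differ in small but pleasant ways. For the nilradical you argue directly from the definition ($f^k\in\ideal_1\Rightarrow\phi(f)^k\in\ideal_2$), whereas the paper invokes the prime-ideal characterization and pulls back primes; your version is more elementary. For the $\infty$-radical you work straight from the definition via functoriality of $\cinfty$-localization together with the fact that the zero ring admits morphisms only to itself; the paper instead quotes the characterization from \cite{MR86} p.~329 ($\alpha\in\iradical{\ideal_1}$ iff some $\beta\in\ideal_1$ becomes invertible in $\cring_1\{\alpha^{-1}\}$) and pushes $\beta$ forward. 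Your route avoids the external reference, at the cost of the one-line observation about the zero ring; your alternative ``concretely'' argument via Proposition~\ref{InfiniteNilpotence} also works but is heavier than needed. For the $\mathbb R$-Jacobson radical the two arguments are essentially identical: both reduce to the observation that any $\cinfty$-morphism into $\mathbb R$ is surjective (equivalently, every $\cinfty$-ring contains $\mathbb R$ as the constants), so $\phi^{-1}(\maxi)$ is again maximal with residue field $\mathbb R$.
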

\begin{proof} The nilradical of $\ideal$ is the intersection of prime ideals containing $\ideal$, and pre-images of prime-ideals are again prime ideals.

Maximal ideals with residue field $\mathbb R$ are kernels of surjective maps onto $\mathbb R$, and any $\cinfty$-morphism to $\mathbb R$ is surjective (elements of $\mathbb R$ are the structure constants of the theory of $\cinfty$-rings). Therefore pre-images of maximal ideals with residue field $\mathbb R$ are again such ideals.

According to \cite{MR86} page 329, $\alpha\in\iradical{\ideal_1}$,  iff $\exists\beta\in\ideal_1$, s.t.\@ $\beta$ becomes invertible in $\cring_1\{\alpha^{-1}\}$. Then $\phi(\beta)\in\ideal_2$, and we conclude that $\phi(\alpha)\in\iradical{\ideal_2}$.\end{proof}

In fact, for the nilradical and \miderham\@radical a much stronger result is true.

\begin{proposition}\label{StrongFunctoriality} Let $\phi\colon\cring_1\rightarrow\cring_2$ be a morphism in $\allrings$. Then
	\begin{equation*}\phi^{-1}(\nradical{0})=\nradical{\kernel{\phi}},\quad\phi^{-1}(\iradical{0})=\iradical{\kernel{\phi}}.\end{equation*}
\end{proposition}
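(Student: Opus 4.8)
The plan is to reduce the statement to the two already-available facts: Lemma \ref{WeakFunctoriality} (which gives one inclusion in each case after a suitable choice of ideals) and Proposition \ref{InfiniteNilpotence} together with Lemma \ref{SurjectiveFunctoriality} for the $\infty$-radical. For the nilradical, note that $\phi$ factors as a surjection $\cring_1\twoheadrightarrow\cring_1/\kernel\phi$ followed by an injection $\cring_1/\kernel\phi\hookrightarrow\cring_2$; since $\nradical{0}$ in $\cring_2$ pulls back along an injective ring map to the nilradical of its image, and since $\nradical{0}$ commutes with surjections ($\nradical{0}$ of a quotient is the image of the preimage of $\nradical{}$ of the bigger ring when the ideal is contained in it — more simply, $f^k=0$ in $\cring_1/\kernel\phi$ iff $f^k\in\kernel\phi$), one gets $\phi^{-1}(\nradical{0})=\nradical{\kernel\phi}$. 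This is the classical commutative-algebra fact and I would dispatch it in a line or two.

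The substance is the $\infty$-radical. First I would apply Lemma \ref{SurjectiveFunctoriality} to the canonical surjection $\cring_1\twoheadrightarrow\cring_1/\kernel\phi$, which already gives $\iradical{\kernel\phi}$ as the preimage of $\iradical{0}\leq\cring_1/\kernel\phi$. So it suffices to treat the case of an \emph{injective} morphism $\psi\colon\cring_1\hookrightarrow\cring_2$ and show $\psi^{-1}(\iradical{0})=\iradical{0}$, i.e.\ $\psi(a)$ is $\infty$-nilpotent in $\cring_2$ iff $a$ is $\infty$-nilpotent in $\cring_1$. The ``if'' direction is contained in Lemma \ref{WeakFunctoriality} (with $\ideal_1=\ideal_2=0$). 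For the ``only if'' direction I would use the characterization from Proposition \ref{InfiniteNilpotence}: $a\in\iradical{0}$ iff there is $f\in\cring_1\ctensor\cinfty(\mathbb R)$ with $f(a)=0$ and $f$ invertible in $\cring_1\ctensor\cinfty(\mathbb R\setminus\{0\})$, equivalently (via the proof of that proposition) iff $x\in\iradical{\igen{a-x}}\leq\cring_1\ctensor\cinfty(\mathbb R)$, equivalently iff there is $\beta\in\igen{a-x}$ becoming invertible in $(\cring_1\ctensor\cinfty(\mathbb R))\{x^{-1}\}$ (\cite{MR86} p.\ 329).

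Now the crux: given $\psi(a)\in\iradical{0}\leq\cring_2$, we have some $g\in\cring_2\ctensor\cinfty(\mathbb R)$ with $g(\psi(a))=0$ and $g$ invertible in $\cring_2\ctensor\cinfty(\mathbb R\setminus\{0\})$; we must produce such an element already defined over $\cring_1$. The key point is that $\cring\ctensor\cinfty(\mathbb R)$ is the \emph{free} $\cinfty$-ring on $\cring$ together with one extra generator $x$, so $\cring_2\ctensor\cinfty(\mathbb R)$ is generated as a $\cinfty$-ring by (the image of) $\cring_2$ and $x$; but $g$, being a single element, is expressible by a $\cinfty$-function applied to finitely many elements $\psi(a_1),\dots,\psi(a_n),x$ — and crucially the subring it lives in only needs $\psi$-images. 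Concretely, using that the condition ``$g(\psi(a))=0$ and $g$ invertible away from the diagonal $x=\psi(a)$'' is the statement ``$x\in\iradical{\igen{\psi(a)-x}}$'', I would invoke Lemma \ref{SurjectiveFunctoriality}-style initiality: the $\cinfty$-ring $(\cring_i\ctensor\cinfty(\mathbb R))/\igen{a-x}$ is canonically isomorphic to $\cring_i$ (substituting $a$ for $x$), compatibly with $\psi$; hence $\psi^{-1}(\iradical{0}\leq\cring_2)$ corresponds, under these isomorphisms, to the pullback along $\psi\ctensor\id\colon\cring_1\ctensor\cinfty(\mathbb R)\to\cring_2\ctensor\cinfty(\mathbb R)$ of $\iradical{\igen{\psi(a)-x}}$, and the latter map, being injective ($\cinfty(\mathbb R)$ is free, hence flat in the relevant sense, so tensoring an injection stays injective), reduces the claim to the same statement for the injective map $\psi\ctensor\id$. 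To close the loop I would then observe that $x\in\iradical{\igen{\psi(a)-x}}$ together with the fact (the real content of the Moerdijk--Reyes Lemma 2.2 / zero-set characterization recalled after Proposition \ref{FourInclusions}) that $\infty$-nilpotence is detected on zero-sets in $\mathbb R^S$, descends along the injection because zero-sets of images determine zero-sets upstairs via the induced map on spectra. \textbf{The main obstacle} is precisely this descent-along-an-injection step: unlike the surjective case (Lemma \ref{SurjectiveFunctoriality}), there is no universal-property shortcut, and one must genuinely use the geometric characterization of the $\infty$-radical (same zero-sets) plus the fact that an injective $\cinfty$-ring map induces a dominant map of spectra in the appropriate sense, so that if $f$ and some $\beta\in\ideal$ have the same zero-set downstairs their preimages do upstairs. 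Making this precise — identifying the right notion of ``dominant'' and checking the zero-set pullback behaves well for possibly infinitely generated rings — is where the care is needed; everything else is bookkeeping with the free adjunction $\cring\mapsto\cring\ctensor\cinfty(\mathbb R)$ and the already-proven lemmas.
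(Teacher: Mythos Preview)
Your nilradical argument and the first reduction (factor $\phi$ as a surjection followed by an injection, then invoke Lemma~\ref{SurjectiveFunctoriality} for the surjective part) are fine and match the paper. The gap is in your treatment of the injective case for the $\infty$-radical.

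You reduce the question for an injection $\psi\colon\cring_1\hookrightarrow\cring_2$ to the same question for $\psi\ctensor\id\colon\cring_1\ctensor\cinfty(\mathbb R)\hookrightarrow\cring_2\ctensor\cinfty(\mathbb R)$; that is circular. You then appeal to ``an injective $\cinfty$-ring map induces a dominant map of spectra,'' but this is exactly the statement that needs proof, and for arbitrary $\cring_1\hookrightarrow\cring_2$ there is no obvious reason the induced map on $\mathbb R$-points (or any reasonable spectrum) should let you pull back the zero-set witness required by the Moerdijk--Reyes characterization. The flatness claim for $\ctensor\cinfty(\mathbb R)$ is also not justified and, in any case, does not resolve the issue.

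The paper does something different and more concrete: rather than working with an abstract injection, it chooses generators so that both rings are presented as quotients of free rings with $I\subseteq J$, giving a map $\psi\colon\cinfty(\mathbb R^I)\hookrightarrow\cinfty(\mathbb R^J)$ induced by the \emph{projection} $\Psi\colon\mathbb R^J\to\mathbb R^I$. Now the zero-set characterization applies directly on both sides, and the inclusion $\psi^{-1}(\iradical{\ideal})\subseteq\iradical{\psi^{-1}(\ideal)}$ becomes the following analytic problem: given $f\in\ideal$ with $\{f=0\}=\Psi^{-1}(V)$, produce $h\in\cinfty(\mathbb R^m)$ with $\{h=0\}=V$ and $h\circ\Psi$ divisible by $f$. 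This is Lemma~\ref{RelativeOrders}, and its proof (Appendix~\ref{ProofRelativeOrders}) is genuine analysis: one must construct $h$ vanishing on $V$ \emph{faster} than any prescribed rate coming from $f$, which the paper does via a du~Bois-Reymond--type argument about orders of infinity. That analytic step is the real content of the proposition, and it is entirely absent from your proposal; the phrase ``making this precise \ldots\ is where the care is needed'' is pointing at the whole proof.
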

\begin{proof} For the nilradical the statement is clear: $\phi(a)^k=0\Leftrightarrow a^k\in\kernel{\phi}$.

\smallskip

The case of $\infty$-radical. Let $\{a_i\}_{i\in I}$ be a set of generators of $\cring_1$ as a $\cinfty$-ring, and let $\{b_j\}_{j\in J}$ be a set of generators of $\cring_2$ obtained by enlarging $\{\phi(a_i)\}_{i\in I}$. Let $\psi\colon\cinfty(\mathbb R^I)\hookrightarrow\cinfty(\mathbb R^{J})$ be the $\cinfty$-morphism corresponding to $\Psi\colon\mathbb R^{J}\rightarrow\mathbb R^I$ given by $I\subseteq J$.\footnote{We say $f\in\cinfty(\mathbb R^n)$, if $f$ factors into a projection $\mathbb R^I\rightarrow\mathbb R^k$ and a smooth $\mathbb R^k\rightarrow\mathbb R$.}  Clearly $\phi$, $\psi$ make up a commutative diagram with the projections $\pi_1\colon\cinfty(\mathbb R^I)\rightarrow\cring_1$, $\pi_2\colon\cinfty(\mathbb R^J)\rightarrow\cring_2$. 

We claim it is enough to prove the proposition for $\psi$. Indeed, from Lemma \ref{WeakFunctoriality} we know that $\iradical{\kernel{\phi}}\leq\phi^{-1}(\iradical{0})$. This inclusion is an equality, iff $\pi_1^{-1}(\iradical{\kernel{\phi}})=\pi_1^{-1}(\phi^{-1}(\iradical{0}))$, but $\pi_1^{-1}(\phi^{-1}(\iradical{0}))=\psi^{-1}(\pi_2^{-1}(\iradical{0}))$, and from Lemma \ref{SurjectiveFunctoriality} we know that $\pi_1^{-1}(\iradical{\kernel{\phi}})=\iradical{\pi^{-1}(\kernel{\phi})}=\iradical{\psi^{-1}(\pi_2^{-1}(0))}$ and $\psi^{-1}(\pi_2^{-1}(\iradical{0}))=\psi^{-1}(\iradical{\pi_2^{-1}(0)})$.

 So it is enough to prove that $\forall\ideal\leq\cinfty(\mathbb R^{J})$ $\iradical{\psi^{-1}(\ideal)}=\psi^{-1}(\iradical{\ideal})$. From Lemma \ref{WeakFunctoriality} we already have one direction, it remains to show that	
	\begin{equation}\label{IdealsInAffine}\iradical{\psi^{-1}(\ideal)}\supseteq\psi^{-1}(\iradical{\ideal}).\end{equation}
For an $a\in\cinfty(\mathbb R^I)$ being in $\psi^{-1}(\iradical{\ideal})$ is equivalent to existence of $f\in\ideal$, s.t.\@ $\vset{\psi(a)}{\rpt}=\vset{f}{\rpt}\subseteq\mathbb R^J$ (\cite{MR86} Lemma 2.2). We can choose {\it finite} subsets $I'\subseteq I$, $J'\subseteq J$, s.t.\@ $I'\subseteq J'$ and $a\in\cinfty(\mathbb R^{I'})$, $f\in\cinfty(\mathbb R^{J'})$. We denote $m:=|I'|$, $m+n:=|J'|$, $\ideal':=\cinfty(\mathbb R^{m+n})\cap\ideal$.

The statement $a\in\iradical{\psi^{-1}(\ideal)}$ is equivalent to existence of $h\in\psi^{-1}(\ideal)$, s.t.\@ $\vset{a}{\rpt}=\vset{h}{\rpt}$. The following lemma implies then (\ref{IdealsInAffine}).

\begin{lemma}\label{RelativeOrders} Let $\Psi\colon\mathbb R^{m+n}\rightarrow\mathbb R^m$ be the projection. Let $V\subseteq\mathbb R^m$ be a closed subset. Let $f\in\cinfty(\mathbb R^{m+n})$ be such that $\vset{f}{\rpt}=\Psi^{-1}(V)$. There are $g\in\cinfty(\mathbb R^{m+n})$, $h\in\cinfty(\mathbb R^m)$, s.t.\@ $\psi(h)=f g$, and $\vset{g}{\rpt}=\Psi^{-1}(V)$.\end{lemma}
One proves this lemma by finding an $h\in\cinfty(\mathbb R^m)$, s.t.\@ $\vset{h}{\rpt}=V$ and $h\circ\Psi$ vanishes on $\Psi^{-1}(V)$ faster than $f$. This is easy given the classical theory of infinite orders (\cite{Ha1910}). Details are in Appendix \ref{ProofRelativeOrders}.\end{proof}%

A similar statement for the $\mathbb R$-Jacobson radical is wrong in general. Consider $\cinfty(\mathbb R)/\zerogerm{0}\rightarrow\cinfty(\mathbb R\setminus\{0\})/(\zerogerm{0})$, where $\zerogerm{0}\leq\cinfty(\mathbb R)$ consists of functions having $0$ germ at $0\in\mathbb R$. This morphism is injective because a smooth function in a punctured neighbourhood can be extended to the puncture in at most one way, yet $\pradical{0}$ in the domain is $\zerovalue{0}/\zerogerm{0}$, the ideal of functions that vanish at $0\in\mathbb R$, while in the codomain it is the entire ring.

\smallskip

The following lemma answers the natural question of whether by composing different radicals we get anything new. 

\begin{lemma}\label{Iteration} Let $\cring\in\allrings$, and let $\ideal\leq\cring$ be any ideal. Then
	\begin{equation*}\nradical{\nradical{\ideal}}=\nradical{\ideal},\quad\iradical{\iradical{\ideal}}=\iradical{\ideal},\quad
	\pradical{\pradical{\ideal}}=\pradical{\ideal},\end{equation*}
	\begin{equation*}\iradical{\nradical{\ideal}}=\iradical{\ideal},\quad\pradical{\iradical{\ideal}}=\pradical{\ideal},\quad
	\pradical{\nradical{\ideal}}=\pradical{\ideal},\end{equation*}
	\begin{equation*}\nradical{\iradical{\ideal}}=\iradical{\ideal},\quad\nradical{\pradical{\ideal}}=\pradical{\ideal},\quad
	\iradical{\pradical{\ideal}}=\pradical{\ideal}.\end{equation*}
\end{lemma}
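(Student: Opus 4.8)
The strategy is to isolate the only non-formal ingredient, the idempotency of the $\infty$-radical, and derive everything else mechanically. First I note that each of the three radicals is \emph{monotone} ($\ideal_1\subseteq\ideal_2$ implies $\nradical{\ideal_1}\subseteq\nradical{\ideal_2}$, and likewise for $\iradical{-}$ and $\pradical{-}$) and \emph{extensive} ($\ideal\subseteq\nradical{\ideal}\subseteq\iradical{\ideal}\subseteq\pradical{\ideal}$). Monotonicity is immediate from the definitions — for $\iradical{-}$ one uses that a quotient of the zero ring is zero and that $\cinfty$-localization is functorial, and for $\pradical{-}$ that a larger ideal lies in fewer maximal ideals — and extensivity is the trivial inclusion $\ideal\subseteq\nradical{\ideal}$ together with Proposition~\ref{FourInclusions}. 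Using only these two properties plus the chain of Proposition~\ref{FourInclusions}, I would reduce all nine identities to the three idempotency statements $\nradical{\nradical{\ideal}}=\nradical{\ideal}$, $\iradical{\iradical{\ideal}}=\iradical{\ideal}$, $\pradical{\pradical{\ideal}}=\pradical{\ideal}$. Concretely, if $r,r'$ are two of the radicals with $r(\ideal)\subseteq r'(\ideal)$ for all $\ideal$ (by Proposition~\ref{FourInclusions} this covers the pairs $(\nradical{-},\iradical{-})$, $(\nradical{-},\pradical{-})$, $(\iradical{-},\pradical{-})$), then $r'(r(\ideal))=r'(\ideal)$ — ``$\supseteq$'' from $\ideal\subseteq r(\ideal)$ and monotonicity of $r'$, ``$\subseteq$'' from $r(\ideal)\subseteq r'(\ideal)$, monotonicity and idempotency of $r'$ — and $r(r'(\ideal))=r'(\ideal)$ — ``$\supseteq$'' is extensivity of $r$, ``$\subseteq$'' follows by applying the chain of Proposition~\ref{FourInclusions} to the ideal $r'(\ideal)$ and then idempotency of $r'$. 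These two patterns produce exactly the six ``mixed'' equalities.

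For the idempotency of $\nradical{-}$ one observes that $f^{k}\in\nradical{\ideal}$ forces $f^{km}\in\ideal$ for some $m$; equivalently $\nradical{\ideal}$ is an intersection of prime ideals, hence its own nilradical. For $\pradical{-}$ I would first note $\rzeroset{\pradical{\ideal}}=\rzeroset{\ideal}$: ``$\subseteq$'' because $\ideal\subseteq\pradical{\ideal}$, and ``$\supseteq$'' because each $\maxi\in\rzeroset{\ideal}$ appears in the intersection defining $\pradical{\ideal}$ and therefore contains it. Then $\pradical{\pradical{\ideal}}=\bigcap_{\maxi\in\rzeroset{\pradical{\ideal}}}\maxi=\bigcap_{\maxi\in\rzeroset{\ideal}}\maxi=\pradical{\ideal}$.

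The one step requiring genuine $\cinfty$-input, and the place where I expect the main difficulty to sit, is $\iradical{\iradical{\ideal}}=\iradical{\ideal}$. The plan is to use the reformulation from \cite{MR86} (page~329) already invoked above: $\alpha\in\iradical{\ideal}$ iff some $\beta\in\ideal$ becomes invertible in $\cring\{\alpha^{-1}\}$. Let $g\in\iradical{\iradical{\ideal}}$; pick $\beta\in\iradical{\ideal}$ invertible in $\cring\{g^{-1}\}$, and then $\gamma\in\ideal$ invertible in $\cring\{\beta^{-1}\}$. Because $\beta$ is invertible in $\cring\{g^{-1}\}$, the universal property of the $\cinfty$-localization $\cring\{\beta^{-1}\}$ yields a morphism $\cring\{\beta^{-1}\}\rightarrow\cring\{g^{-1}\}$ of $\cinfty$-rings under $\cring$; it carries the unit $\gamma$ to a unit, so $\gamma\in\ideal$ becomes invertible in $\cring\{g^{-1}\}$, whence $g\in\iradical{\ideal}$. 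The reverse inclusion is extensivity. (If this is unsatisfactory, an alternative is to reduce to a free $\cinfty$-ring $\cinfty(\mathbb R^{S})$ via Lemma~\ref{SurjectiveFunctoriality} and use the zero-set description of $\iradical{-}$ from \cite{MR86}, Lemma~2.2, where idempotency is simply transitivity of ``having the same set of zeroes''.)

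Assembling these: the three idempotencies fed into the two patterns of the first paragraph give all nine equalities. The only non-formal ingredient is the $\infty$-radical idempotency, resting on the \cite{MR86} localization criterion and the universal property of $\cinfty$-localizations; everything else is routine bookkeeping with monotone, extensive, idempotent closure operators linearly ordered by Proposition~\ref{FourInclusions}.
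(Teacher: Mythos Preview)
Your proof is correct, and the overall reduction to the three idempotencies via monotonicity, extensivity and the chain of Proposition~\ref{FourInclusions} is exactly what the paper does (the paper compresses your two ``patterns'' into one sentence invoking Lemma~\ref{WeakFunctoriality} and Proposition~\ref{FourInclusions}).

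The one genuine difference is how you handle $\iradical{\iradical{\ideal}}=\iradical{\ideal}$. The paper goes through its own Proposition~\ref{InfiniteNilpotence}: it adjoins a free variable, represents elements of $\iradical{-}$ via functions $f\in\cring\ctensor\cinfty(\mathbb R)$ becoming invertible over $\mathbb R\setminus\{0\}$, and then argues that the composite $g(f)$ again has the required invertibility by manipulating ideals in $\cring\ctensor\cinfty(\mathbb R^2)$. Your route is more direct: you stay with the Moerdijk--Reyes localization criterion (\cite{MR86}, p.~329) and observe that invertibility of $\beta$ in $\cring\{g^{-1}\}$ yields, by the universal property of $\cinfty$-localization, a $\cinfty$-morphism $\cring\{\beta^{-1}\}\to\cring\{g^{-1}\}$ under $\cring$, which transports the unit $\gamma$ to a unit. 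This avoids the auxiliary variable and the substitution calculus entirely. The paper's argument buys a concrete ``composition of orders of vanishing'' picture consistent with its $\infty$-nilpotence formalism; yours buys brevity and uses nothing beyond what the paper has already cited in Lemma~\ref{WeakFunctoriality}. Your parenthetical alternative (reduce to $\cinfty(\mathbb R^S)$ via Lemma~\ref{SurjectiveFunctoriality} and use the zero-set description) would also work and is closer in spirit to how the paper handles several later arguments.
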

\begin{proof} In the first row the only non-trivial statement is $\iradical{\iradical{\ideal}}=\iradical{\ideal}$. According to Proposition \ref{InfiniteNilpotence}, if $a\in\iradical{\iradical{\ideal}}$, there is $f\in\cring\ctensor\cinfty(\mathbb R)$, s.t.\@ $f$ becomes invertible in $\cring\ctensor\cinfty(\mathbb R\setminus\{0\})$ and $f(a)\in\iradical{\ideal}$. In turn this implies existence of $g\in\cring\ctensor\cinfty(\mathbb R)$ with the same invertibility properties, and s.t.\@ $g(f(a))\in\ideal$. Consider $f$ and $g$ as elements of $\cring\ctensor\cinfty(\mathbb R^2)$, via the two different inclusions $\cring\ctensor\cinfty(\mathbb R)\rightrightarrows\cring\ctensor\cinfty(\mathbb R^2)$. Then we see that $g(f(a))=(g(f))(a)$. Indeed, let $x,y$ be generators of $\cinfty(\mathbb R^2)$, then $g(f(a))$ is the class of $g$ modulo $\igen{a-x}+\igen{f-y}$, which is the same as $(g(f))(a))$ (reversing the order of division). Inverting $x$ implies inverting $f$, and then modulo $\igen{f-y}$ it means inverting $y$, which implies inverting $g$, thus $g(f)$ becomes invertible in $\cring\ctensor\cinfty(\mathbb R\setminus\{0\})$ and we conclude that $a\in\iradical{\ideal}$.

Each radical preserves the inclusion relation between ideals (Lemma \ref{WeakFunctoriality}), hence the other rows follow from the first because of Proposition \ref{FourInclusions}.\end{proof}%

\smallskip

Lemma \ref{WeakFunctoriality} implies that $\cring\mapsto\cring/\nradical{0}$, $\cring\mapsto\cring/\iradical{0}$, $\cring\mapsto\cring/\pradical{0}$ canonically extend to functors $\redun,\redui,\redup\colon\allrings\longrightarrow\allrings$.
\begin{definition} We will call these functors {\it the nilpotent, \miderham and point reductions}. If nilpotent, \miderham or point radical of $0$ is again $0$, the $\cinfty$-ring will be called respectively {\it reduced}, {\it \miderham\@reduced} and {\it point reduced}.\footnote{Finitely generated point reduced $\cinfty$-rings are easily identified with the point-determined rings from \cite{MR91}. However, Example \ref{NoFit} shows that reduced and \miderham\@reduced $\cinfty$-rings do not have similar descriptions in \cite{MR91}.} \end{definition}

The corresponding full subcategories of $\allrings$ consisting of reduced $\cinfty$-rings will be denoted by $\nrings$, $\irings$, $\prings$. It is clear (Proposition \ref{FourInclusions}) that we have a sequence of adjunctions
	\begin{equation*}\xymatrix{\allrings\ar@<.5ex>[rr]^{\redun} && \nrings\ar@<.5ex>[rr]^{\redui}\ar@<.5ex>[ll] && 
	\irings\ar@<.5ex>[rr]^{\redup}\ar@<.5ex>[ll] && \prings\ar@<.5ex>[ll],}\end{equation*}
with the right adjoints being inclusions of full subcategories. Lemma \ref{Iteration} tells us of course that $\redui\circ\redun=\redui$, $\redup\circ\redui=\redup$. We finish this section with the following simple lemma.

\begin{lemma}\label{Facts} Let $\cring\in\allrings$, and let $\ideal_1,\ideal_2\leq\cring$ be any ideals, then 
	\begin{equation*}\nradical{\ideal_1\cap\ideal_2}=\nradical{\ideal_1}\cap\nradical{\ideal_2},\quad
	\iradical{\ideal_1\cap\ideal_2}=\iradical{\ideal_1}\cap\iradical{\ideal_2}.\end{equation*}
Let $\cring_1,\cring_2\in\allrings$, and let $\ideal_1\leq\cring_1$, $\ideal_2\leq\cring_2$ be any ideals. Then
	\begin{equation*}\nradical{(\ideal_1)+(\ideal_2)}\geq(\nradical{\ideal_1})+(\nradical{\ideal_2}),\quad
	\iradical{(\ideal_1)+(\ideal_2)}\geq(\iradical{\ideal_1})+(\iradical{\ideal_2}),\end{equation*}
where $(\ideal_1)\leq\cring_1\ctensor\cring_2$ is generated by $\ideal_1\leq\cring_1\hookrightarrow\cring_1\ctensor\cring_2$ and so on.
\end{lemma}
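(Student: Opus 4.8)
The plan is to treat the four claims in two groups, and to reduce everything to the characterization of the $\infty$-radical via common zero sets (Lemma 2.2 of \cite{MR86}), together with Proposition \ref{InfiniteNilpotence} where convenient.

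First, the intersection formulas. For the nilradical, $\nradical{\ideal_1\cap\ideal_2}=\nradical{\ideal_1}\cap\nradical{\ideal_2}$ is standard commutative algebra: if $f^k\in\ideal_1$ and $f^l\in\ideal_2$ then $f^{k+l}\in\ideal_1\cap\ideal_2$, and the reverse inclusion is obvious. For the $\infty$-radical I would first reduce to the finitely generated affine case: choose generators so that both ideals live in some $\cinfty(\mathbb R^S)$, and use Lemma \ref{SurjectiveFunctoriality} to transport the statement back and forth along the presentation $\cinfty(\mathbb R^S)\twoheadrightarrow\cring$ (noting $\psi^{-1}$ commutes with $\cap$). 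In $\cinfty(\mathbb R^S)$ invoke Lemma 2.2 of \cite{MR86}: $f\in\iradical{\ideal_1}\cap\iradical{\ideal_2}$ means there are $g_1\in\ideal_1$, $g_2\in\ideal_2$ with $\vset{f}{\rpt}=\vset{g_1}{\rpt}=\vset{g_2}{\rpt}$; then $g_1^2+g_2^2\in\ideal_1\cap\ideal_2$ (using that squares are available via the $\cinfty$-structure, indeed just the ring structure) and $\vset{g_1^2+g_2^2}{\rpt}=\vset{g_1}{\rpt}\cap\vset{g_2}{\rpt}=\vset{f}{\rpt}$, so $f\in\iradical{\ideal_1\cap\ideal_2}$. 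The reverse inclusion $\iradical{\ideal_1\cap\ideal_2}\subseteq\iradical{\ideal_1}\cap\iradical{\ideal_2}$ is immediate from monotonicity (Lemma \ref{WeakFunctoriality}).

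Second, the superadditivity statements over the coproduct. Here I would argue purely formally. We have canonical maps $\cring_1\to\cring_1\ctensor\cring_2$ and $\cring_2\to\cring_1\ctensor\cring_2$; by definition $(\ideal_i)$ is the image ideal, so $\ideal_i$ maps into $(\ideal_i)\subseteq(\ideal_1)+(\ideal_2)$. By Lemma \ref{WeakFunctoriality} applied to each of these two maps (with target ideal $(\ideal_1)+(\ideal_2)$), the nilradical resp.\ $\infty$-radical of $\ideal_i$ in $\cring_i$ maps into the corresponding radical of $(\ideal_1)+(\ideal_2)$ in $\cring_1\ctensor\cring_2$. Hence the ideal $(\nradical{\ideal_1})+(\nradical{\ideal_2})$ generated by these images is contained in $\nradical{(\ideal_1)+(\ideal_2)}$, and likewise for the $\infty$-radical, which is exactly the asserted inclusion.

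The only real subtlety is the step $\vset{g_1^2+g_2^2}{\rpt}=\vset{g_1}{\rpt}\cap\vset{g_2}{\rpt}$ in the affine reduction: this uses that a sum of squares of real-valued smooth functions vanishes exactly where both vanish, which is elementary, plus the genuinely needed point that Lemma 2.2 of \cite{MR86} applies to arbitrary (possibly infinite) sets of generators, so the preliminary reduction to $\cinfty(\mathbb R^S)$ via Lemma \ref{SurjectiveFunctoriality} is legitimate — I would state this reduction carefully but expect no obstruction, since $\psi^{-1}$ preserves finite intersections of ideals and sends $\iradical{-}$ to $\iradical{-}$ by Lemma \ref{SurjectiveFunctoriality}. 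Everything else is formal manipulation of the inclusion-preserving functoriality already established.
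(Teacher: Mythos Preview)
Your argument for the intersection formula in the $\infty$-case contains a genuine slip: from $g_1\in\ideal_1$ and $g_2\in\ideal_2$ you cannot conclude $g_1^2+g_2^2\in\ideal_1\cap\ideal_2$. Indeed $g_1^2\in\ideal_1$ and $g_2^2\in\ideal_2$, but there is no reason for $g_2^2$ to lie in $\ideal_1$, so the sum is only guaranteed to be in $\ideal_1+\ideal_2$. The fix is exactly what the paper does: use the \emph{product} $g_1 g_2$ instead. Then $g_1 g_2\in\ideal_1$ (because $g_1\in\ideal_1$) and $g_1 g_2\in\ideal_2$ (because $g_2\in\ideal_2$), so $g_1 g_2\in\ideal_1\cap\ideal_2$; and $\vset{g_1 g_2}{\rpt}=\vset{g_1}{\rpt}\cup\vset{g_2}{\rpt}=\vset{f}{\rpt}$, since both zero sets already equal $\vset{f}{\rpt}$. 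Everything else in your first part---the reduction to $\cinfty(\mathbb R^S)$ via Lemma~\ref{SurjectiveFunctoriality} and the use of monotonicity for the easy inclusion---matches the paper.

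For the second (superadditivity) part your purely formal argument via Lemma~\ref{WeakFunctoriality} applied to the structure maps $\cring_i\to\cring_1\ctensor\cring_2$ is correct and in fact slightly cleaner than the paper's proof, which instead reduces to free $\cinfty$-rings and invokes the zero-set characterization again. Your route avoids that reduction entirely and works uniformly for both radicals.
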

\begin{proof} Since $\ideal_1\cap\ideal_2\leq\ideal_1$, $\ideal_1\cap\ideal_2\leq\ideal_2$, we have $\iradical{\ideal_1\cap\ideal_2}\leq\iradical{\ideal_1}$, $\iradical{\ideal_1\cap\ideal_2}\leq\iradical{\ideal_2}$ (Lemma \ref{WeakFunctoriality}). We choose a surjective $\alpha\colon\cinfty(\mathbb R^S)\rightarrow\cring$. Since $\alpha^{-1}(\iradical{-})=\iradical{\alpha^{-1}(-)}$ (Lemma \ref{SurjectiveFunctoriality}), we can assume $\cring=\cinfty(\mathbb R^S)$. According to \cite{MR86}, Lemma 2.2 $f\in\iradical{\ideal_1}\cap\iradical{\ideal_2}$, iff $\exists g_1\in\ideal_1$, $\exists g_2\in\ideal_2$, s.t.\@ $\vset{g_1}{\rpt}=\vset{f}{\rpt}=\vset{g_2}{\rpt}$. Then $g_1g_2\in\ideal_1\cap\ideal_2$ and $\vset{(g_1g_2)}{\rpt}=\vset{f}{\rpt}$, therefore $f\in\iradical{\ideal_1}\cap\iradical{\ideal_2}$ implies $f\in\iradical{\ideal_1\cap\ideal_2}$. The nilpotent case is well known.

The second statement is obvious in the nilpotent case. In case of $\iradical{-}$, choosing generators we can assume $\cring_1$, $\cring_2$ are free (Lemma \ref{SurjectiveFunctoriality}), and then the claim becomes obvious due to characterization of $f\in\iradical{\ideal}$ above.\end{proof}

\section{Two kinds of de Rham space}

\subsection{Regularity conditions}\label{RegularityConditions}

Usually an infinitesimal theory requires working with rings that satisfy some regularity conditions. Our constructions of de Rham spaces will also involve this type of assumptions, but they would be rather weak.

\begin{definition} Given a functor $\mathcal F\colon\allrings\rightarrow\allrings$, left adjoint to the inclusion $\mathcal F(\allrings)\subseteq\allrings$, we say that a $\cinfty$-ring $\cring$ is {\it $\mathcal F$-projective}, if for any $\cring'\in\allrings$ and any $\cinfty$-morphism $\cring\rightarrow\mathcal F(\cring')$ there is a lifting in
	\begin{equation*}\xymatrix{& \cring'\ar[d]^{u}\\ \cring\ar[r]\ar@{.>}[ru] & \mathcal F(\cring'),}\end{equation*}
where ${ u}$ is given by the unit of the adjunction.

We will say that $\cspace\in\cschemes$ is {\it $\redun$-injective} or {\it $\redui$-injective}, if $\cinfty(\cspace)$ is $\redun$- or $\redui$-projective respectively.
\end{definition}

\noindent Since the units for $\redun$, $\redui$ consist of surjective morphisms, it is clear that $\mathbb R^I$ is both $\redun$- and $\redui$-injective for any set $I$. Here are others.

\begin{example}\label{LocalizationProjective} Let $U\subseteq\mathbb R^n$ be an open subset ($n\in\mathbb Z_{\geq 0}$). It is known (e.g.\@ \cite{MR91}, Proposition I.1.6) that $\cinfty(U)\cong\cinfty(\mathbb R^n)\{f^{-1}\}$ for some $f$ in $\cinfty(\mathbb R^n)$. We claim that $\cinfty(U)$ is both $\redun$- and $\redui$-projective. Let $\cring\in\allrings$, let $\ideal\leq\Jradical{0}\leq\cring$ be any ideal contained in the Jacobson radical of $\cring$. Consider the following diagram of solid arrows
	\begin{equation*}\xymatrix{ && \cring\ar[d]\\ \cinfty(\mathbb R^n)\ar[r]\ar@{.>}[rru]^\psi & \cinfty(U)\ar[r]_\phi\ar@{.>}[ru] & \cring/\ideal,}\end{equation*}
with $\phi$ being any $\cinfty$-morphism. Since $\cinfty(\mathbb R^n)$ is free and $\cring\rightarrow\cring/\ideal$ is surjective, we have a lift $\psi$. As $f$ is invertible in $\cinfty(U)$, $\phi(f)$ does not belong to any maximal ideal of $\cring/\ideal$. Since $\ideal\leq\Jradical{0}$, the set of maximal ideals in $\cring$ coincides with the set of pre-images of maximal ideals in $\cring/\ideal$. Therefore any pre-mage of $\phi(f)$ does not belong to any maximal ideal of $\cring$, hence it is invertible. In particular $\psi(f)$ is invertible, and $\psi$ factors through $\cinfty(U)$. Uniqueness of $f^{-1}$ implies that the entire diagram is commutative. From Proposition \ref{FourInclusions} we know that $\nradical{0}\leq\Jradical{0}$, $\iradical{0}\leq\Jradical{0}$, and we conclude that $\cinfty(U)$ is both $\redun$- and $\redui$-projective.\end{example}

\begin{example}\label{ManifoldsInjective} Let $\cmanif\subseteq\mathbb R^n$ be a smooth manifold embedded in $\mathbb R^n$. It is known that $\cmanif$ is a retract of its tubular neighbourhood in $\mathbb R^n$, which is $\redun$- and $\redui$-injective (Example \ref{LocalizationProjective}). Hence any such manifold is $\redun$- and $\redui$-injective as well.\end{example}

\begin{example}\label{PointInjective} Let $\mathcal F\colon\allrings\rightarrow\allrings$ be a functor, left adjoint to the inclusion $\mathcal F(\allrings)\subseteq\allrings$, and let $\cring\in\allrings$, s.t.\@ $\mathcal F(\cring)\cong\mathbb R$. For any $\cring'\in\allrings$ any $\cinfty$-morphism $\cring\rightarrow\mathcal F(\cring')$ factors through $\mathcal F(\cring)\cong\mathbb R$. Since $\mathbb R$ is an initial object in $\allrings$, any $\mathbb R\rightarrow\mathcal F(\cring')$ lifts to $\mathbb R\rightarrow\cring'$. Thus $\cring$ is $\mathcal F$-projective.

For example: any $\cring\in\allrings$, that is finite dimensional as an $\mathbb R$-space, is both $\redun$- and $\redui$-projective. Another example: let $f\in\mathbb R^n$, such that $\vset{f}{\rpt}=\{0\}\subseteq\mathbb R^n$ and $\rjet{0}(f)=0$. As $\iradical{\igen{f}}=\zerovalue{0}$, the ideal of functions vanishing at $0\in\mathbb R^n$, we see that $\cinfty(\mathbb R^n)/\igen{f}$ is $\redui$-projective.\end{example}

\begin{example} Let $f:=e^{-\frac{1}{x^2}}\in\cinfty(\mathbb R)$. According to Example \ref{PointInjective} the $\cinfty$-ring $\cinfty(\mathbb R)/\igen{f}$ is $\redui$-projective. We claim that it is {\it not} $\redun$-projective. It is clear that $f\in\nradical{\igen{f^2}}$, and hence $\cinfty(\mathbb R)/\igen{f^2}\rightarrow\redun(\cinfty(\mathbb R)/\igen{f^2})$ factors through $\cinfty(\mathbb R)/\igen{f}$. On the other hand $\zerojet{0}\leq\cinfty(\mathbb R)$ (the ideal of functions with vanishing jet at $0\in\mathbb R$) is radical and we have a projection $\redun(\cinfty(\mathbb R)/\igen{f^2})\rightarrow\mathbb R[[x]]$. Altogether we have the following diagram
	\begin{equation}\xymatrix{& \cinfty(\mathbb R)/\igen{f}\ar[ld]\ar[d]^\pi\ar[rd]^= && \\
	\mathbb R[[x]] & \redun(\cinfty(\mathbb R)/\igen{f^2})\ar[l] & \cinfty(\mathbb R)/\igen{f}\ar[l] & \cinfty(\mathbb R)/\igen{f^2}.\ar[l]}\end{equation}
Suppose we had a $\cinfty$-morphism $\alpha\colon\cinfty(\mathbb R)/\igen{f}\rightarrow\cinfty(\mathbb R)/\igen{f^2}$ that lifts $\pi$. Since the composition with $\cinfty(\mathbb R)/\igen{f^2}\rightarrow\mathbb R[[x]]$ is the usual projection, we see that $\alpha([x])=[x+g]$, where $g\in\zerojet{0}$, with the brackets denoting classes modulo $\igen{f}$ and $\igen{f^2}$ respectively. Then we should have $f(x+g)\in\igen{f^2}$, yet
	\begin{equation*}\frac{f(x+g)}{f}=\frac{e^{-\frac{1}{(x+g)^2}}}{e^{-\frac{1}{x^2}}}=e^{-\frac{x^2-(x+g)^2}{(x+g)^2x^2}}=
	e^{\frac{2x g+g^2}{x^4+2x^3g+g^2x^2}},\end{equation*}
and since $g$ has $0$-jet at $0\in\mathbb R$, $x^4$ divides g and $\frac{f(x+g)}{f}\underset{x\rightarrow 0}\longrightarrow 1$. Note that it is important that the projection on $\mathbb R[[x]]$ remains constant, as the change of variables $x\mapsto\frac{x}{\sqrt{2}}$ maps $(f)$ onto $(f^2)$.\end{example}

\subsection{De Rham spaces as colimits of pre-sheaves}\label{DeRhamSpaces}

Recall that for an $\cspace\in\cschemes$ we denote by $\resh{\cspace}$ the pre-sheaf $\hom_{\cschemes}(-,\cspace)$. There are several Grothendieck topologies one might consider on $\cschemes$, we will mostly be working with representable pre-sheaves without mentioning a Grothendieck topology. Note, however, that not all of the common topologies on $\cschemes$ are sub-canonical (\cite{MR91}, \S III).

\begin{definition} Let $\presh$ be a pre-sheaf on $\cschemes$ with values in $\sets$. We denote by $\infired{\presh}$, $\algered{\presh}$ the composite functors $$\presh\circ\redui\colon\allrings\longrightarrow\sets,\quad\presh\circ\redun\colon\allrings\longrightarrow\sets.$$\end{definition}
Since $\redun$, $\redui$ are left adjoint to the corresponding inclusions they come with natural transformations $\id_{\allrings}\rightarrow\redui$, $\id_{\allrings}\rightarrow\redun$, and consequently there are canonical morphisms $\presh\rightarrow\infired{\presh}$, $\presh\rightarrow\algered{\presh}$. We would like to have explicit descriptions of $\infired{\resh{\cspace}}$ and $\algered{\resh{\cspace}}$ for respectively $\redui$- and $\redun$-injective $\cspace\in\cschemes$. To do this we need to start with special neighbourhoods of the diagonal.

\smallskip

Let $\diago{\cspace}\subseteq\cspace\times\cspace$ be the diagonal, and let $\zerovalue{\diago{\cspace}}$ be the kernel of $\cinfty(\cspace\times\cspace)\rightarrow\cinfty(\diago{\cspace})$. We define
	\begin{equation}\label{SetsOfIdeals}\fornil{\diago{\cspace}}:=\{\ideal\leq\cinfty(\cspace\times\cspace)\,|\,\nradical{\ideal}=
	\maxia{\diago{\cspace}}\},\end{equation}
	\begin{equation*}\infinil{\diago{\cspace}}:=\{\ideal\leq\cinfty(\cspace\times\cspace)\,|\,\iradical{\ideal}=\maxii{\diago{\cspace}}\}.\end{equation*}
Given $\ideal\in\fornil{\diago{\cspace}}$, we have the $\cinfty$-ring $\cinfty(\cspace\times\cspace)/\ideal$, and the relation of inclusion among elements of $\fornil{\diago{\cspace}}$ induces the structure of a category on this set of $\cinfty$-rings. Similarly for $\infinil{\diago{\cspace}}$. The following proposition implies that the corresponding categories of neighbourhoods of the diagonal are filtered.

\begin{proposition}\label{FilteredDiagrams} The sets of ideals $\fornil{\diago{\cspace}}$, $\infinil{\diago{\cspace}}$ are closed with respect to finite intersections.\end{proposition}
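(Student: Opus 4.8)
The plan is to reduce the statement to the purely algebraic facts already established in Lemma~\ref{Facts} and Lemma~\ref{WeakFunctoriality}. Fix $\cspace\in\cschemes$, write $\cring:=\cinfty(\cspace\times\cspace)$ and $\maxi:=\zerovalue{\diago{\cspace}}$, and take two ideals $\ideal_1,\ideal_2$ in the set $\fornil{\diago{\cspace}}$, i.e.\@ $\nradical{\ideal_1}=\nradical{\ideal_2}=\nradical{\maxi}$. We must show $\nradical{\ideal_1\cap\ideal_2}=\nradical{\maxi}$, and likewise with $\nradical{-}$ replaced by $\iradical{-}$ for the set $\infinil{\diago{\cspace}}$. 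The first identity of Lemma~\ref{Facts} gives $\nradical{\ideal_1\cap\ideal_2}=\nradical{\ideal_1}\cap\nradical{\ideal_2}=\nradical{\maxi}\cap\nradical{\maxi}=\nradical{\maxi}$, and the same identity of Lemma~\ref{Facts} for the $\infty$-radical gives $\iradical{\ideal_1\cap\ideal_2}=\iradical{\ideal_1}\cap\iradical{\ideal_2}=\iradical{\maxi}$. That is the whole argument.

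In writing it up I would spell out the only slightly non-formal point, namely that a finite intersection is obtained by iterating the binary case: if $\ideal_1,\dots,\ideal_k$ all have nilradical (resp.\@ $\infty$-radical) equal to the corresponding radical of $\maxi$, then by induction on $k$ using the binary identity of Lemma~\ref{Facts} the ideal $\ideal_1\cap\cdots\cap\ideal_k$ does too; the empty intersection is $\cring$ itself, whose radical need not be $\nradical{\maxi}$, so ``finite intersection'' here should be read as ``nonempty finite intersection'', which is how it is used in the filteredness argument that follows. I would state this explicitly to avoid confusion, but it requires no new input.

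There is essentially no obstacle: the proposition is a direct corollary of Lemma~\ref{Facts}, which was proved earlier using the characterization of the $\infty$-radical via common zero sets (\cite{MR86}, Lemma~2.2) — the trick there being that if $g_1\in\ideal_1$ and $g_2\in\ideal_2$ have the same zero set as $f$, then $g_1g_2\in\ideal_1\cap\ideal_2$ still has that zero set. If I wanted the proof of Proposition~\ref{FilteredDiagrams} to be self-contained I would reproduce that one-line observation, but since Lemma~\ref{Facts} is already available the cleanest write-up simply cites it. The only thing worth double-checking is that $\maxi$, and hence $\nradical{\maxi}$ and $\iradical{\maxi}$, is the same ideal in both occurrences — which is automatic since it is fixed once $\cspace$ is fixed — so no compatibility issue arises.
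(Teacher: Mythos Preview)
Your proof is correct and is exactly the paper's approach: the paper's own proof is the single sentence ``Follows directly from Lemma~\ref{Facts}'', and your argument spells out precisely how that lemma is applied. The extra remarks about iterating the binary case and the empty intersection are harmless clarifications, and the mention of Lemma~\ref{WeakFunctoriality} in your plan is unused and can be dropped.
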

\begin{proof} Follows directly from Lemma \ref{Facts}.\end{proof}%

\begin{definition}\label{TwoNeighborhoods} For $\cspace\in\cschemes$, and $\fornil{\diago{\cspace}}$, $\infinil{\diago{\cspace}}$ as in (\ref{SetsOfIdeals}) we denote
	\begin{equation*}\fornei{\diago{\cspace}}:=\underset{\ideal\in\fornil{\diago{\cspace}}}\colim\;\resh{\spec(\cinfty(\cspace\times\cspace)/\ideal)},\quad\infinei{\diago{\cspace}}:=
	\underset{\ideal\in\infinil{\diago{\cspace}}}\colim\;\resh{\spec(\cinfty(\cspace\times\cspace)/\ideal)},\end{equation*}
with the colimits taken in the category of pre-sheaves on $\cschemes$.\end{definition}

Each $\spec(\cinfty(\cspace\times\cspace)/\ideal)$ is canonically embedded into $\cspace\times\cspace$, and these embeddings make $\cspace\times\cspace$ into a co-cone over the diagram given by the inclusions of ideals. Therefore there are canonical $\fornei{\diago{\cspace}}\rightarrow\resh{\cspace}\times\resh{\cspace}$, $\infinei{\diago{\cspace}}\rightarrow\resh{\cspace}\times\resh{\cspace}$, and composing them with the two projections on $\resh{\cspace}$, we obtain
	\begin{equation}\label{InfinityRelation}\xymatrix{\fornei{\diago{\cspace}}\ar@<.5ex>[r]\ar@<-.5ex>[r] & \resh{\cspace}, &
	\infinei{\diago{\cspace}}\ar@<.5ex>[r]\ar@<-.5ex>[r] & \resh{\cspace}.}\end{equation}
	
\begin{proposition}\label{DeRhamAsQuotient} Let $\cspace\in\cschemes$ be $\redun$-injective (respectively $\redui$-injective), the canonical morphism $\resh{\cspace}\rightarrow\algered{\resh{\cspace}}$ (respectively $\resh{\cspace}\rightarrow\infired{\resh{\cspace}}$) is a co-equalizer of $\fornei{\diago{\cspace}}\rightrightarrows\resh{\cspace}$ (respectively $\infinei{\diago{\cspace}}\rightrightarrows\resh{\cspace}$) from (\ref{InfinityRelation}).\end{proposition}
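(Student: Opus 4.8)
The plan is to turn the claim into a pointwise statement about sets. Write $\iota_1,\iota_2\colon\cinfty(\cspace)\to\cinfty(\cspace)\ctensor\cinfty(\cspace)=\cinfty(\cspace\times\cspace)$ for the two coprojections and $\Delta^*\colon\cinfty(\cspace\times\cspace)\to\cinfty(\cspace)$ for restriction to the diagonal, so that $\Delta^*\circ\iota_1=\Delta^*\circ\iota_2=\id$ and $\kernel{\Delta^*}=\zerovalue{\diago{\cspace}}$. Colimits, hence coequalizers, of presheaves on $\cschemes$ are computed objectwise, so it suffices to fix $\cring\in\allrings$ and show that $p_*\colon\hom_{\allrings}(\cinfty(\cspace),\cring)\to\hom_{\allrings}(\cinfty(\cspace),\redui(\cring))$, postcomposition with the unit $p\colon\cring\to\redui(\cring)$ — which is what $\resh{\cspace}\to\infired{\resh{\cspace}}$ becomes at $\spec(\cring)$ — exhibits its target as the coequalizer in $\sets$ of the two maps (\ref{InfinityRelation}) out of $\infinei{\diago{\cspace}}(\spec(\cring))$. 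Since the diagram defining $\infinei{\diago{\cspace}}$ is filtered (Proposition \ref{FilteredDiagrams}), an element of $\infinei{\diago{\cspace}}(\spec(\cring))$ is represented by a single pair $(\ideal,h)$ with $\ideal\in\infinil{\diago{\cspace}}$ and $h\colon\cinfty(\cspace\times\cspace)/\ideal\to\cring$, with the two images $h\circ\pi_1,h\circ\pi_2$, where $\pi_i$ is $\iota_i$ followed by the quotient. So the statement reduces to three checks: (i) $p_*(h\circ\pi_1)=p_*(h\circ\pi_2)$ for every such $(\ideal,h)$; (ii) $p_*$ is surjective; (iii) $p_*(u)=p_*(v)$ implies $(u,v)=(h\circ\pi_1,h\circ\pi_2)$ for some such $(\ideal,h)$. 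The $\redun$-version goes through the identical argument with $\redun,\fornil,\fornei,\algered$ replacing $\redui,\infinil,\infinei,\infired$.

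Checks (i) and (ii) I expect to be routine. For (i): for a generator $a$ of $\cinfty(\cspace)$ the difference $\pi_1(a)-\pi_2(a)$ is the class modulo $\ideal$ of $\iota_1(a)-\iota_2(a)\in\kernel{\Delta^*}=\zerovalue{\diago{\cspace}}\subseteq\iradical{\zerovalue{\diago{\cspace}}}=\maxii{\diago{\cspace}}=\iradical{\ideal}$; hence (Lemma \ref{SurjectiveFunctoriality}) it represents an element of $\iradical{0}$ in $\cinfty(\cspace\times\cspace)/\ideal$, which $h$ carries into $\iradical{0}\leq\cring$ (Lemma \ref{WeakFunctoriality}) and which $p$ kills, so $p\circ h\circ\pi_1$ and $p\circ h\circ\pi_2$ agree on generators, hence coincide. (Equivalently: $\redui(\cinfty(\cspace\times\cspace)/\ideal)\cong\redui(\cinfty(\cspace))$ compatibly with both $\pi_i$, which become the unit, a projection composed with the diagonal being the identity.) For (ii): $\redui$-injectivity of $\cspace$ means $\cinfty(\cspace)$ is $\redui$-projective, so every $\cinfty$-morphism $\cinfty(\cspace)\to\redui(\cring)$ lifts along $p$ — which is exactly surjectivity of $p_*$.

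The real work is (iii), and this is where I would use the strong functoriality of the $\infty$-radical. Given $u,v\colon\cinfty(\cspace)\to\cring$ with $p_*(u)=p_*(v)$, I form the morphism $w\colon\cinfty(\cspace\times\cspace)=\cinfty(\cspace)\ctensor\cinfty(\cspace)\to\cring$ with components $u$ and $v$. Comparing restrictions along $\iota_1,\iota_2$ and using $p\circ u=p\circ v$ gives $p\circ w=(p\circ u)\circ\Delta^*$, whence $\zerovalue{\diago{\cspace}}=\kernel{\Delta^*}\subseteq\kernel{p\circ w}=w^{-1}(\iradical{0})$; and \emph{here} Proposition \ref{StrongFunctoriality} enters: $w^{-1}(\iradical{0})=\iradical{\kernel{w}}$, so by Lemma \ref{Iteration} $\maxii{\diago{\cspace}}=\iradical{\zerovalue{\diago{\cspace}}}\subseteq\iradical{\kernel{w}}$. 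Then I take $\ideal:=\kernel{w}\cap\zerovalue{\diago{\cspace}}$: it sits inside $\kernel{w}$, so $w$ factors through some $h\colon\cinfty(\cspace\times\cspace)/\ideal\to\cring$, and by Lemma \ref{Facts},
\[\iradical{\ideal}=\iradical{\kernel{w}}\cap\iradical{\zerovalue{\diago{\cspace}}}=\iradical{\kernel{w}}\cap\maxii{\diago{\cspace}}=\maxii{\diago{\cspace}},\]
so $\ideal\in\infinil{\diago{\cspace}}$, while $h\circ\pi_1=u$ and $h\circ\pi_2=v$ — which is (iii). The $\redun$-case uses instead the trivial nilradical half of Proposition \ref{StrongFunctoriality} (that $w(a)^k=0\Leftrightarrow a^k\in\kernel{w}$) together with $\redun$-projectivity. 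The main obstacle is precisely the inclusion $\zerovalue{\diago{\cspace}}\subseteq\iradical{\kernel{w}}$: Lemma \ref{WeakFunctoriality} alone only yields $\iradical{\kernel{w}}\subseteq w^{-1}(\iradical{0})$, the inclusion pointing the wrong way, and without its reverse the ideal $\ideal$ need not lie in $\infinil{\diago{\cspace}}$. So it is exactly the strong functoriality of Section \ref{ThreeReductions} — which in the $\infty$-case rests on the classical theory of infinite orders — that makes the construction work; everything else is bookkeeping once $\cspace$ satisfies its regularity hypothesis.
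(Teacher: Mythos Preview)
Your proof is correct and follows essentially the same route as the paper's: reduce to a pointwise statement, verify the co-cone property via the adjunction, get surjectivity from $\redui$-projectivity, and use Proposition~\ref{StrongFunctoriality} for the key identification step. Your choice $\ideal:=\kernel{w}\cap\zerovalue{\diago{\cspace}}$ in (iii) is in fact a slight sharpening of the paper's argument, which takes $\ideal=\kernel{\alpha^*\ctensor\beta^*}$ and asserts $\kernel{\alpha^*\ctensor\beta^*}\leq\maxii{\diago{\cspace}}$ without justification (this upper bound can fail, e.g.\ when $\alpha^*=\beta^*$ has nontrivial kernel), whereas your intersection with $\zerovalue{\diago{\cspace}}$ together with Lemma~\ref{Facts} forces the required equality $\iradical{\ideal}=\maxii{\diago{\cspace}}$ cleanly.
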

\begin{proof} We prove he \miderham case, the other one is similar. The reason everything works is strong functoriality of the corresponding radical (Proposition \ref{StrongFunctoriality}).

Since colimits in a category of pre-sheaves are computed object-wise, we need to show that $\forall\cspace'\in\cschemes$ the map $\hom_{\cschemes}(\cspace',\cspace)\rightarrow\infired{\resh{\cspace}}(\cspace')$ is a coequalizer of $\infinei{\diago{\cspace}}(\cspace')\rightrightarrows\hom_{\cschemes}(\cspace',\cspace)$. Again computing colimits object-wise we have
	\begin{equation*}\infinei{\diago{\cspace}}(\cspace')=
	(\underset{\ideal\in\infinil{\diago{\cspace}}}\colim\;\resh{\spec(\cinfty(\cspace\times\cspace)/\ideal}))(\cspace')\cong\end{equation*}
	\begin{equation*}\cong\underset{\ideal\in\infinil{\diago{\cspace}}}\colim\;
	\hom_{\allrings}(\cinfty(\cspace\times\cspace)/\ideal,\cinfty(\cspace')).\end{equation*}
As $\redui\colon\allrings\rightarrow\allrings$ is left adjoint, any $\cinfty(\cspace\times\cspace)/\ideal\rightarrow\redui(\cinfty(\cspace'))$ factors through $\redui(\cinfty(\cspace\times\cspace)/\ideal)=\redui(\cinfty(\diago{\cspace}))$. Therefore for any $\phi\colon\cinfty(\cspace\times\cspace)/\ideal\rightarrow\cinfty(\cspace')$ the two possible compositions in 
	\begin{equation*}\xymatrix{\cinfty(\cspace)\ar@<+.5ex>[r]\ar@<-.5ex>[r] & \cinfty(\cspace\times\cspace)/\ideal\ar[r]^{\quad\phi} &
	\cinfty(\cspace')\ar[r] & \redui(\cinfty(\cspace'))}\end{equation*}
are equal, i.e.\@ $\resh{\cspace}(\cspace')\rightarrow\infired{\resh{\cspace}}(\cspace')$ is a co-cone on $\infinei{\diago{\cspace}}(\cspace')\rightrightarrows\resh{\cspace}(\cspace')$. Next we show that this co-cone is universal. Since $\cspace$ is $\redui$-injective  
	\begin{equation*}\xymatrix{\resh{\cspace}(\cspace')=\hom_{\cschemes}(\cspace',\cspace)\ar[r] & 
	\hom_{\cschemes}(\redui(\cspace'),\cspace)=\infired{\resh{\cspace}}(\cspace')}\end{equation*} 
is surjective. Thus all we need to show is that $\forall \alpha,\beta\in\resh{\cspace}(\cspace')$, that become identified in $\infired{\resh{\cspace}}(\cspace')$, $\exists\gamma\in\infinei{\diago{\cspace}}(\cspace')$ which is projected to $\alpha$, $\beta$ by (\ref{InfinityRelation}).

Choose generators $\{x_i\}_{i\in I}$ for $\cinfty(\cspace)$. As $\alpha,\beta$ are identified in $\infired{\resh{\cspace}}(\cspace')$ we have $\forall i\in I$ $\alpha^*(x_i)-\beta^*(x_i)\in\iradical{0}\leq\cinfty(\cspace')$. Let $\{x^1_i,x^2_i\}_{i\in I}$ be the corresponding generators of $\cinfty(\cspace\times\cspace)$. Using Proposition \ref{StrongFunctoriality} we see that $\forall i\in I$ $x^1_i-x^2_i\in\iradical{\kernel{\alpha^*\ctensor\beta^*}}\leq\cinfty(\cspace\times\cspace)$, i.e.\@ $\iradical{\kernel{\alpha^*\ctensor\beta^*}}=\maxii{\diago{\cspace}}$ (the $\geq$ direction follows from $x^1_i-x^2_i\in\iradical{\kernel{\alpha^*\ctensor\beta^*}}$ and the opposite inequality holds because $\kernel{\alpha^*\ctensor\beta^*}\leq\maxii{\diago{\cspace}}$). Thus $\kernel{\alpha^*\ctensor\beta^*}\in\infinil{\diago{\cspace}}$, and putting $\gamma:=\alpha\times\beta$ we are done.\end{proof}%

\noindent If $\cinfty(\cspace)$ is finitely generated, $\infinei{\diago{\cspace}}, \fornei{\diago{\cspace}}$ have simpler descriptions.

\begin{proposition}\label{FinitePresentation} Suppose $\cinfty(\cspace)\in\allrings$ is finitely generated, then
	\begin{equation*}\fornei{\diago{\cspace}}\cong
	\underset{k\in\mathbb Z_{\geq 1}}\colim\;\resh{\spec(\cinfty(\cspace\times\cspace)/(\zerovalue{\diago{\cspace}})^k)},\end{equation*}
	\begin{equation*}\infinei{\diago{\cspace}}\cong
	\underset{f\in\charafu{\diago{\cspace}}}\colim\;\resh{\spec(\cinfty(\cspace\times\cspace)/\igen{f})},\end{equation*}
where $\charafu{\diago{\cspace}}:=\{f\in\cinfty(\cspace\times\cspace)\,|\,\iradical{\igen{f}}=\maxii{\diago{\cspace}}\}$. The colimits are taken in the category of pre-sheaves on $\cschemes$.\end{proposition}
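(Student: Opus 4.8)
The plan is to exhibit the two small diagrams of the proposition as \emph{cofinal} sub-diagrams of the filtered diagrams defining $\fornei{\diago{\cspace}}$ and $\infinei{\diago{\cspace}}$ in Definition~\ref{TwoNeighborhoods}; once that is done, the colimits over the small diagrams agree with those over all of $\fornil{\diago{\cspace}}$ and $\infinil{\diago{\cspace}}$. Throughout I regard $\fornil{\diago{\cspace}}$ and $\infinil{\diago{\cspace}}$ as posets ordered by \emph{reverse} inclusion of ideals (the order making $\ideal\mapsto\resh{\spec(\cinfty(\cspace\times\cspace)/\ideal)}$ covariant); they are directed by Proposition~\ref{FilteredDiagrams}. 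Finite generation of $\cinfty(\cspace)$ enters only to conclude that, after fixing generators $x_1,\dots,x_n$ of $\cinfty(\cspace)$ and writing $x^1_i,x^2_i$ for the induced generators of $\cinfty(\cspace\times\cspace)$, the diagonal ideal $\zerovalue{\diago{\cspace}}$ is the \emph{finitely generated} ideal $\igen{x^1_1-x^2_1,\dots,x^1_n-x^2_n}$; this rests on the standard fact that the $\cinfty$-coproduct $(\cinfty(\mathbb R^n)/\mathfrak b)\ctensor(\cinfty(\mathbb R^n)/\mathfrak b)$ is $\cinfty(\mathbb R^{2n})$ modulo the two copies of $\mathfrak b$.

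For the nilpotent formula, first I would observe that each $(\zerovalue{\diago{\cspace}})^k$ lies in $\fornil{\diago{\cspace}}$ since $\nradical{\ideal^k}=\nradical{\ideal}$, and that these ideals form a descending chain, hence a directed sub-poset. Cofinality then amounts to showing that every $\ideal$ with $\nradical{\ideal}=\maxia{\diago{\cspace}}$ contains some power $(\zerovalue{\diago{\cspace}})^N$. This is elementary: from $\zerovalue{\diago{\cspace}}\subseteq\nradical{\ideal}$ one gets $(x^1_i-x^2_i)^{N_i}\in\ideal$ for each $i$, and since there are only finitely many generators, every product of $N:=1+\sum_i(N_i-1)$ elements of $\zerovalue{\diago{\cspace}}$ lands in $\ideal$, i.e.\@ $(\zerovalue{\diago{\cspace}})^N\subseteq\ideal$. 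Since a cofinal directed sub-poset computes the same colimit, the first displayed isomorphism follows.

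For the $\infty$-formula, $\igen{f}\in\infinil{\diago{\cspace}}$ for every $f\in\charafu{\diago{\cspace}}$ by definition. To see that $\{\igen{f}\mid f\in\charafu{\diago{\cspace}}\}$ is a directed sub-poset, I would pull everything back along a surjection from a free $\cinfty(\mathbb R^{2n})$ and use Lemma~2.2 of \cite{MR86} together with Lemma~\ref{SurjectiveFunctoriality}: these identify $\charafu{\diago{\cspace}}$ with the set of $f$ whose vanishing locus in $\cspace\times\cspace$ is exactly $\diago{\cspace}$, a property visibly stable under products; and since $\igen{f_1f_2}\subseteq\igen{f_1}\cap\igen{f_2}$, the element $f_1f_2\in\charafu{\diago{\cspace}}$ is a common upper bound for $\igen{f_1}$ and $\igen{f_2}$. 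For cofinality, fix $\ideal$ with $\iradical{\ideal}=\maxii{\diago{\cspace}}$ and set $w:=\sum_{i=1}^n(x^1_i-x^2_i)^2\in\zerovalue{\diago{\cspace}}$. A zero-set computation (again Lemma~2.2 of \cite{MR86}, via Lemma~\ref{SurjectiveFunctoriality}) gives $\iradical{\igen{w}}=\iradical{\zerovalue{\diago{\cspace}}}=\maxii{\diago{\cspace}}$, so $w\in\charafu{\diago{\cspace}}$; and since $w\in\zerovalue{\diago{\cspace}}\subseteq\iradical{\zerovalue{\diago{\cspace}}}=\maxii{\diago{\cspace}}=\iradical{\ideal}$, Lemma~2.2 of \cite{MR86} produces $g\in\ideal$ with the same vanishing locus as $w$, whence $\iradical{\igen{g}}=\iradical{\igen{w}}=\maxii{\diago{\cspace}}$, i.e.\@ $g\in\charafu{\diago{\cspace}}$ and $\igen{g}\subseteq\ideal$. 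This is precisely cofinality of the sub-poset indexed by $\charafu{\diago{\cspace}}$, and the second displayed isomorphism follows.

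The routine part is the cofinality bookkeeping; the step I expect to be the real obstacle is the $\infty$-case, namely transporting the Moerdijk--Reyes vanishing-locus criterion (stated for free $\cinfty$-rings $\cinfty(\mathbb R^S)$) to the finitely generated quotient $\cinfty(\cspace\times\cspace)$ and keeping careful track of vanishing loci \emph{relative to} $\cspace\times\cspace$ --- which is exactly what Lemma~\ref{SurjectiveFunctoriality} is for --- together with verifying $\iradical{\igen{w}}=\iradical{\zerovalue{\diago{\cspace}}}$ for $w=\sum_i(x^1_i-x^2_i)^2$ and the identification $\zerovalue{\diago{\cspace}}=\igen{x^1_i-x^2_i}$. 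The nilpotent case, while elementary, genuinely needs finite generation: for an infinitely generated $\cinfty(\cspace)$ the powers of $\zerovalue{\diago{\cspace}}$ need not be cofinal in $\fornil{\diago{\cspace}}$.
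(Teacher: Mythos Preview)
Your proposal is correct and follows essentially the same cofinality strategy as the paper: both reduce to showing that every $\ideal\in\fornil{\diago{\cspace}}$ contains some $(\zerovalue{\diago{\cspace}})^k$ (via finite generation of the diagonal ideal) and every $\ideal\in\infinil{\diago{\cspace}}$ contains some $\igen{f}$ with $f\in\charafu{\diago{\cspace}}$ (via a sum-of-squares trick after pulling back to a free $\cinfty$-ring and invoking Lemma~2.2 of \cite{MR86}). The only organisational difference is that the paper packages the $\infty$-step as a standalone lemma (for any finitely generated ideal $\ideal$ and any $\ideal'$ with $\iradical{\ideal'}=\iradical{\ideal}$, some $f\in\ideal'$ has $\iradical{\igen{f}}=\iradical{\ideal}$) by first finding a $g_i\in\ideal'$ with the same zero set as each generator and then summing squares, whereas you first form $w=\sum_i(x^1_i-x^2_i)^2$ and then find a single $g\in\ideal$ matching its zero set; these are the same argument run in opposite orders.
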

\begin{proof} Let $\ideal\in\fornil{\diago{\cspace}}$, a standard argument shows that $\exists k\in\mathbb Z_{\geq 1}$, s.t.\@ $(\zerovalue{\diago{\cspace}})^k\leq\ideal$: let $\{x_i\}_{1\leq i\leq n}$ be generators of $\cinfty(\cspace)$, then $\{x_i^1-x_i^2\}_{1\leq i\leq n}$ generate $\zerovalue{\diago{\cspace}}$, where $x_i^1$, $x_i^2$ are the two pullbacks of $x_i$ to $\cspace\times\cspace$. By assumption $\nradical{\ideal}\geq\zerovalue{\diago{\cspace}}$, in particular there are $\{k_i\}_{1\leq i\leq n}\subset\mathbb Z_{\geq 1}$ s.t.\@ $\forall i\,(x_i^1-x_i^2)^{k_i}\in\ideal$. Putting $k:=n\max(k_1,\ldots,k_n)$ we are done. 
Similarly the following Lemma implies that for any $\ideal\in\infinil{\diago{\cspace}}$ there is $f\in\charafu{\diago{\cspace}}$, s.t.\@ $\igen{f}\leq\ideal$.
\begin{lemma} Let $\cring$ be a $\cinfty$-ring, and let $\ideal\leq\cring$ be a finitely generated ideal. For any $\ideal'\leq\cring$, s.t.\@ $\iradical{\ideal'}=\iradical{\ideal}$ there is $f\in\ideal'$, s.t.\@ $\iradical{\igen{f}}=\iradical{\ideal}$.\end{lemma}
\begin{proof} Choose a surjective morphism $\cinfty(\mathbb R^I)\rightarrow\cring$, where $I$ is some set. Let $\widetilde{\ideal}'$, $\widetilde{\ideal}$ be the pre-images of $\ideal'$, $\ideal$ in $\cinfty(\mathbb R^I)$. Let $\{f_1,\ldots,f_n\}$ be a set of generators of $\ideal$ as an ideal, and choose their pre-images $\{\widetilde{f}_1,\ldots,\widetilde{f}_n\}$ in $\widetilde{\ideal}$. From Proposition \ref{StrongFunctoriality} we know that $\iradical{\widetilde{\ideal}'}=\iradical{\widetilde{\ideal}}$, in particular $\forall i$ $\widetilde{f}_i\in\iradical{\widetilde{\ideal}'}$. This means there are $\{\widetilde{g}_1,\ldots,\widetilde{g}_n\}\subseteq\widetilde{\ideal}'$, s.t.\@ $\forall i$ $\widetilde{f}_i$ and $\widetilde{g}_i$ vanish at the same points. Let $f$ be the image in $\cring$ of $g:=\underset{1\leq i\leq n}\sum\;\widetilde{g}_i^2$. We claim that $\iradical{\igen{f}}=\iradical{\ideal}$. As $f\in\ideal'$ and $\iradical{\ideal'}=\iradical{\ideal}$, the $\leq$ direction is clear. Since $g$ vanishes exactly at the set of common zeroes of $\{\widetilde{f}_i\}_{1\leq i\leq n}$, it is clear that $\forall i$ $g\widetilde{f}_i$ and $\widetilde{f}_i$ vanish at the same points. Thus $\forall i$ $\widetilde{f}_i\in\iradical{\igen{g}}$. Consequently pre-image of $\iradical{\igen{f}}$ contains each $\widetilde{f}_i$, and hence the entire pre-image of $\ideal$. This implies $\ideal\leq\iradical{\igen{f}}$, and then $\iradical{\igen{f}}\geq\iradical{\ideal}$.\end{proof}%

We see that $\mathcal D'_{\rm alg}:=\{\resh{\spec(\cinfty(\cspace\times\cspace)/(\zerovalue{\diago{\cspace}})^k)}\}_{k\in\mathbb N}$ is a sub-category of $\mathcal D_{\rm alg}:=\{\resh{\spec(\cinfty(\cspace\times\cspace)/\ideal)}\}_{\ideal\in\fornil{\diago{\cspace}}}$, and every object in $\mathcal D_{\rm alg}$ is mapped to an object of $\mathcal D'_{\rm alg}$. Similarly for $\mathcal D'_\infty:=\{\resh{\spec(\cinfty(\cspace\times\cspace)/\igen{f})}\}_{f\in\charafu{\diago{\cspace}}}$ and $\mathcal D_\infty:=\{\resh{\spec(\cinfty(\cspace\times\cspace)/\ideal)}\}_{\ideal\in\infinil{\diago{\cspace}}}$. Each one of these categories is filtered (Proposition \ref{FilteredDiagrams}), therefore $\mathcal D'_{\rm alg}\subseteq\mathcal D_{\rm alg}$, $\mathcal D'_\infty\subseteq\mathcal D_\infty$ are right cofinal subcategories (\cite{Hirschhorn03}, Def.\@ 14.2.1), and hence colimits over $\mathcal D'_{\rm alg}$ and $\mathcal D'_\infty$ are isomorphic to colimits over $\mathcal D_{\rm alg}$ and respectively $\mathcal D_\infty$ (\cite{Hirschhorn03}, Thm.\@ 14.2.5).\end{proof}%

Suppose that $\cspace$ is a manifold, and hence so is $\cspace\times\cspace$. Proposition \ref{FinitePresentation} is very familiar in the algebraic case: $\fornei{\diago{\cspace}}$ is just the formal neighbourhood of the diagonal obtained by dividing by higher and higher powers of $\zerovalue{\diago{\cspace}}$. 

In the $\infty$-case the situation is more interesting: for a function $f$ in $\cinfty(\cspace\times\cspace)$ we have $\iradical{\igen{f}}=\zerovalue{\diago{\cspace}}$, if and only if $f$ vanishes exactly on the diagonal. As we will see in the following section this implies that $\infinei{\diago{\cspace}}$ is {\it the germ} of the diagonal.

\smallskip

We finish this section by relating the two kinds of de Rham spaces.

\begin{proposition}\label{Comparison}  For any $\cspace\in\cschemes$ there is a commutative diagram:
	\begin{equation*}\xymatrix{\fornei{\diago{\cspace}}\ar[rr]\ar[rd] && \infinei{\diago{\cspace}}\ar[ld]\\ & \resh{\cspace}\times\resh{\cspace}.}\end{equation*}
\end{proposition}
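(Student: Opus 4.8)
The plan is to get the horizontal arrow straight from the definitions: by Definition \ref{TwoNeighborhoods}, both $\fornei{\diago{\cspace}}$ and $\infinei{\diago{\cspace}}$ are colimits of the single assignment $\ideal\mapsto\resh{\spec(\cinfty(\cspace\times\cspace)/\ideal)}$, the first over the index set $\fornil{\diago{\cspace}}$ and the second over $\infinil{\diago{\cspace}}$, both ordered by inclusion of ideals and filtered by Proposition \ref{FilteredDiagrams}. If the first index set sits inside the second as a full sub-poset, then the universal co-cone defining $\infinei{\diago{\cspace}}$ restricts to a co-cone on the sub-diagram, producing a canonical $\fornei{\diago{\cspace}}\to\infinei{\diago{\cspace}}$, and the triangle should then be automatic.

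The one genuine step is therefore to show $\fornil{\diago{\cspace}}\subseteq\infinil{\diago{\cspace}}$. Let $\ideal\in\fornil{\diago{\cspace}}$, i.e.\ $\nradical{\ideal}=\maxia{\diago{\cspace}}=\nradical{\zerovalue{\diago{\cspace}}}$. Using the identity $\iradical{\nradical{-}}=\iradical{-}$ from Lemma \ref{Iteration} twice (once for $\ideal$, once for $\zerovalue{\diago{\cspace}}$) I would compute
\[\iradical{\ideal}=\iradical{\nradical{\ideal}}=\iradical{\nradical{\zerovalue{\diago{\cspace}}}}=\iradical{\zerovalue{\diago{\cspace}}}=\maxii{\diago{\cspace}},\]
so $\ideal\in\infinil{\diago{\cspace}}$. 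Since this inclusion plainly respects the inclusion order on ideals, it is a functor between the two index categories under which the diagram that builds $\fornei{\diago{\cspace}}$ is literally the restriction of the one that builds $\infinei{\diago{\cspace}}$; restricting the universal co-cone for $\infinei{\diago{\cspace}}$ then induces the desired map $\fornei{\diago{\cspace}}\to\infinei{\diago{\cspace}}$.

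For commutativity, recall that both maps down to $\resh{\cspace}\times\resh{\cspace}$ are induced by the co-cones whose leg at $\ideal$ is the morphism $\resh{\spec(\cinfty(\cspace\times\cspace)/\ideal)}\to\resh{\cspace}\times\resh{\cspace}$ coming from the canonical embedding $\spec(\cinfty(\cspace\times\cspace)/\ideal)\hookrightarrow\cspace\times\cspace$. For $\ideal\in\fornil{\diago{\cspace}}\subseteq\infinil{\diago{\cspace}}$ this leg is the very same presheaf morphism in either diagram, so $\fornei{\diago{\cspace}}\to\infinei{\diago{\cspace}}\to\resh{\cspace}\times\resh{\cspace}$ agrees with $\fornei{\diago{\cspace}}\to\resh{\cspace}\times\resh{\cspace}$ on every leg and hence they coincide by the universal property. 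I do not expect any real obstacle here: the mathematical content is entirely the displayed one-line computation, and the rest is formal manipulation of filtered colimits of presheaves, the only thing to keep straight being that enlarging the index set of ideals (all still cutting out the same $\infty$-radical) changes nothing at the level of the individual representable presheaves.
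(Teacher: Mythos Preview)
Your proposal is correct and follows the same approach as the paper: both establish the inclusion $\fornil{\diago{\cspace}}\subseteq\infinil{\diago{\cspace}}$ of index posets and then invoke the universal property of the colimit. The paper's proof is a single sentence citing only Proposition~\ref{FourInclusions}, whereas you invoke Lemma~\ref{Iteration} for the identity $\iradical{\nradical{-}}=\iradical{-}$; your citation is in fact the more precise one, since the bare inclusion $\nradical{\ideal}\subseteq\iradical{\ideal}$ from Proposition~\ref{FourInclusions} does not by itself give the needed equality $\iradical{\ideal}=\iradical{\zerovalue{\diago{\cspace}}}$ without passing through Lemma~\ref{Iteration}.
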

\begin{proof} Recall that $\nradical{\ideal}\leq\iradical{\ideal}$ for any ideal (Proposition \ref{FourInclusions}). This immediately implies $\fornil{\diago{\cspace}}\subseteq\infinil{\diago{\cspace}}$ and then the statement.\end{proof}%

\subsection{Reformulation in terms of topological rings}\label{Reformulation}

There is no need to always work with the entire $\resh{\cspace}\times\resh{\cspace}$. We will show here that the natural embeddings of $\fornei{\diago{\cspace}}$ and $\infinei{\diago{\cspace}}$ into $\resh{\cspace}\times\resh{\cspace}$ factor through some very familiar neighbourhoods of the diagonal: the formal neighbourhood in the case of $\fornei{\diago{\cspace}}$ and the germ of the diagonal in the case of $\infinei{\diago{\cspace}}$.

We start with looking at such neighbourhoods in general. Let $\Phi\colon\cspace'\rightarrow\cspace$ be a closed embedding,\footnote{Closed embeddings in $\cschemes$ correspond to surjective morphisms in $\allrings$.} extending (\ref{SetsOfIdeals}) we define
	\begin{equation*}\rfornil{\cspace}{\cspace'}:=\{\ideal\leq\cinfty(\cspace)\,|\,\nradical{\ideal}=\nradical{\kernel{\phi}}\},\end{equation*}
	\begin{equation*}\rinfinil{\cspace}{\cspace'}:=\{\ideal\leq\cinfty(\cspace)\,|\,\iradical{\ideal}=\iradical{\kernel{\phi}}\}.\end{equation*}
\begin{definition}\label{JetsAndGerms} Let $\phi\colon\cring\rightarrow\cring'$ be a surjective morphism in $\allrings$, and let $\Phi\colon\cspace'\rightarrow\cspace$ be the corresponding closed embedding in $\cschemes$. An element $a\in\cring$ has {\it vanishing (infinite) jet at $\cspace'$}, if $a\in\zerojetin{\cspace}{\cspace'}:=\underset{\ideal\in\rfornil{\cspace}{\cspace'}}\bigcap\ideal$. An element $a\in\cring$ has {\it vanishing germ at $\cspace'$}, if 
	\begin{equation*}a\in\zerogermin{\cspace}{\cspace'}:=
	\{a\in\cinfty(\cspace)\,|\,\exists b\in\cinfty(\cspace)\text{ s.t.\@ }a b=0\text{ and }\phi(b)\text{ is invertible}\}.\end{equation*}
{\it The (infinite) jet of $\cspace$ at $\cspace'$} is $\Jet{\cspace}{\cspace'}:=\spec(\cinfty(\cspace)/\zerojetin{\cspace}{\cspace'})$. {\it The germ of $\cspace$ at $\cspace'$} is $\Germ{\cspace}{\cspace'}:=\spec(\cinfty(\cspace)/\zerogermin{\cspace}{\cspace'})$.\footnote{To see that $\zerogermin{\cspace}{\cspace'}$ is an ideal let $a_1,a_2\in\cring$ together with $b_1,b_2\in\cring$, s.t.\@ $\phi(b_1),\phi(b_2)$ are invertible and $a_1 b_1=a_2 b_2=0$, it is clear that $(a_1 c_1+a_2 c_2)b_1 b_2=0$ for any $c_1,c_2\in\cring$, and $\phi(b_1 b_2)$ is invertible.}\end{definition}
If $\kernel{\phi}$ is a finitely generated ideal, it is clear that $\zerojetin{\cspace}{\cspace'}=\underset{k\in\mathbb N}\bigcap(\kernel{\phi})^k$. If $\cring$, $\cring'$ are finitely generated and germ-determined (\cite{MR91}, \S I.5), the usual notion of the germ of $\cspace$ at $\cspace'$ coincides with the one in Definition \ref{JetsAndGerms}. Indeed, choose a surjective $\alpha\colon\cinfty(\mathbb R^n)\rightarrow\cring$, and let $\alpha'\colon\cinfty(\mathbb R^n)\rightarrow\cring'$ be the composition. Let $\ideal,\ideal'\leq\cinfty(\mathbb R^n)$ be the kernels of $\alpha$, $\alpha'$ respectively, and let $V,V'\subseteq\mathbb R^n$ be the corresponding closed sets of common zeroes. 

Let $a\in\cring$ have $0$ germ at $\cspace'$ in the usual sense, this means $\exists U\subseteq\mathbb R^n$ open, s.t.\@ $U$ contains $V'$ and $a|_U=0$, i.e.\@ $\forall f\in\alpha^{-1}(a)$ we have $f|_U\in\ideal|_U$. We choose $g\in\cinfty(\mathbb R^n)$, s.t.\@ $g\neq0$ in an open $U'\supseteq V'$ and $g|_{\mathbb R^n\setminus W}=0$ for some closed $W\subseteq U$. Then $(f g)|_U\in\ideal|_U$, and this implies $f g\in\ideal$ ($\ideal$ is germ-determined). Conversely, suppose $f g\in\ideal$, and $\alpha'(g)$ is invertible. The latter implies that $g$ is invertible in some open $U\supseteq V'$. Dividing by $g|_U$ we find that $f|_U\in\ideal|_U$. 

\smallskip

Now we would like to connect this to elements of $\rinfinil{\cspace}{\cspace'}$.

\begin{proposition}\label{GermsThroughNeighbourhoods} Let $\phi\colon\cring\rightarrow\cring'$ be a surjective morphism in $\allrings$, and let $\Phi\colon\cspace'\rightarrow\cspace$ be the corresponding closed embedding in $\cschemes$. We have
	\begin{equation}\label{TwoVersionsOfGerms}\zerogermin{\cspace}{\cspace'}\subseteq\underset{\ideal\in\rinfinil{\cspace}{\cspace'}}\bigcap\ideal.\end{equation}
Suppose in addition that $\cring$ is finitely generated and point-determined (\cite{MR91}, \S I.5). Then (\ref{TwoVersionsOfGerms}) is an equality.\end{proposition}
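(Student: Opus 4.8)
The plan is to prove the two statements separately, starting with the easy inclusion $(\ref{TwoVersionsOfGerms})$ and then upgrading to equality under the finitely-generated, point-determined hypothesis. For the inclusion, suppose $a\in\zerogermin{\cspace}{\cspace'}$, so there is $b\in\cring$ with $ab=0$ and $\phi(b)$ invertible. I would fix an arbitrary $\ideal\in\rinfinil{\cspace}{\cspace'}$ and show $a\in\ideal$. The key observation is that $\phi(b)$ invertible means $b\notin\maxi$ for every maximal ideal $\maxi$ containing $\kernel{\phi}$; combined with $\iradical{\ideal}=\iradical{\kernel{\phi}}$ and the inclusion $\iradical{\kernel{\phi}}\subseteq\Jradical{\kernel{\phi}}$ (Proposition \ref{FourInclusions}), I can argue that $b$ becomes invertible in $\cring/\ideal$ — or more cleanly, that $b\notin\iradical{\ideal}$ and hence the $\cinfty$-localization $(\cring/\ideal)\{b^{-1}\}$ is nontrivial, but since $ab=0$ in $\cring$ we get $a=0$ in $(\cring/\ideal)\{b^{-1}\}$; chasing this back requires a little care. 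The cleanest route is probably: $ab=0$ forces $a$ to lie in the kernel of $\cring\to(\cring/\ideal)\{b^{-1}\}$, and one shows this kernel is $\ideal$ itself when $b$ maps to an invertible element of $\cring/\ideal$; invertibility of $\phi(b)$ plus $\iradical{\ideal}=\iradical{\kernel{\phi}}$ and the comparison $\iradical{}\subseteq\Jradical{}$ supplies exactly that, since $b$ avoids all maximal ideals over $\kernel{\phi}$, hence all maximal ideals over $\ideal$ (same $\iradical$, so same zero sets of the maximal-spectrum sense), hence is invertible mod $\ideal$.

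For the reverse inclusion under the extra hypotheses, I would transport everything to a free generator ring. Choose a surjection $\alpha\colon\cinfty(\mathbb R^n)\to\cring$ and let $\ideal_0=\kernel\alpha$, $\ideal_0'=\kernel{(\phi\circ\alpha)}$, with closed sets $V=\vset{}{\rpt}(\ideal_0)\supseteq\ldots$, more precisely $V,V'\subseteq\mathbb R^n$ the common zero loci. By Lemma \ref{SurjectiveFunctoriality} and Proposition \ref{StrongFunctoriality}, $\iradical{}$ commutes with $\alpha^{-1}$, so it suffices to work inside $\cinfty(\mathbb R^n)$: given $\widetilde a$ in the intersection $\bigcap_{\ideal\in\rinfinil{\cspace}{\cspace'}}\ideal$ pulled back, I want $\widetilde a g\in\ideal_0$ for some $g$ with $(\phi\circ\alpha)(g)$ invertible, i.e.\@ $g$ nonvanishing on an open neighbourhood of $V'$. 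The key is to exhibit, for each closed $W$ with $V'\subseteq W^\circ$, an ideal $\ideal_W$ with $\iradical{\ideal_W}=\iradical{\ideal_0'}$ whose zero set is exactly $W$: take $\ideal_W=\ideal_0+\igen{\chi_W}$ where $\chi_W\in\cinfty(\mathbb R^n)$ vanishes precisely on $W$ (such $\chi_W$ exists by the standard fact that every closed subset of $\mathbb R^n$ is the zero set of a smooth function, and $\iradical{}$ depends only on the zero set by \cite{MR86} Lemma 2.2). Then $\widetilde a\in\ideal_W$, and since $\cring$ is point-determined, $\widetilde a$ being in $\ideal_0+\igen{\chi_W}$ for all such $W$ — combined with point-determinedness to pass from "agrees at points" to an actual equation — should force $\widetilde a\cdot(\text{bump nonzero near }V')=\ideal_0$, giving the germ condition.

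The main obstacle I expect is the reverse inclusion, specifically the passage from membership in all the $\ideal_W$'s to an honest factorization $\widetilde a g\in\ideal_0$: one has to produce a single $g$ (invertible near $V'$) rather than a family, and this is exactly where point-determinedness of $\cring$ must be used in an essential way, presumably via the same kind of argument as in the discussion following Definition \ref{JetsAndGerms} (where germ-determinedness let one conclude $fg\in\ideal$ from $fg|_U\in\ideal|_U$). Here we have the weaker point-determined hypothesis, so the argument must instead go: $\widetilde a\in\ideal_0+\igen{\chi_W}$ for every $W$ means $\widetilde a$ agrees with an element of $\ideal_0$ modulo $\chi_W$; choosing $\chi_W$ vanishing to high order and letting $W$ shrink toward $V'$, one extracts that $\widetilde a$ vanishes to infinite order along $V'$ in a way that, by point-determinedness of $\cring$ (so $\ideal_0$ equals its own point-radical intersected appropriately), yields the needed $g$. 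I would also double-check the subtlety that $\rinfinil{\cspace}{\cspace'}$ is taken in $\cring$ while the factorization lives in $\cinfty(\mathbb R^n)$, handled by Lemma \ref{SurjectiveFunctoriality}. The nil/jet analogue is not needed here since the statement is only about germs, but the structure of the proof parallels the jet case one paragraph up in the excerpt.
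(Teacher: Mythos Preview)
Your argument for the inclusion $\zerogermin{\cspace}{\cspace'}\subseteq\bigcap_{\ideal}\ideal$ is fine and matches the paper's: once you know that $\iradical{\ideal}=\iradical{\kernel{\phi}}$ forces the maximal ideals over $\ideal$ to coincide with those over $\kernel{\phi}$ (which follows since every maximal ideal $\maxi$ satisfies $\iradical{\maxi}=\maxi$), invertibility of $b$ modulo $\ideal$ and then $a\in\ideal$ drop out exactly as you say. The paper reaches the same conclusion by showing directly that $\iradical{\igen{b}+\ideal}=\cring$, hence $\igen{b}+\ideal=\cring$.

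The reverse inclusion, however, has a genuine gap. Your proposed ideals $\ideal_W=\ideal_0+\igen{\chi_W}$ with $V'\subseteq W^\circ$ do \emph{not} lie in $\rinfinil{\cspace}{\cspace'}$: by \cite{MR86} Lemma 2.2 the condition $\iradical{\ideal}=\iradical{\kernel{\phi}}$ pins down the zero set of $\ideal$ to be exactly $V'$, not a neighbourhood $W$. Every ideal in $\rinfinil{\cspace}{\cspace'}$ has the \emph{same} zero locus $V'$; what distinguishes them is the \emph{rate of vanishing} of their generators at $V'$, not the support. So the family $\{\ideal_W\}$ you construct is simply not available, and no amount of shrinking $W$ toward $V'$ will repair this.

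The paper's actual argument (Appendix \ref{ProofGermsThroughNeighbourhoods}) works in the opposite direction and is considerably more analytic. Given a putative $a\in\bigcap_{\ideal}\ideal$ with $a\notin\zerogermin{\cspace}{\cspace'}$, one lifts to $f\in\cinfty(\mathbb R^n)$ and to generators $g_i$ of $\kernel\phi$, then builds \emph{modified} generators $\overline{g}_i=g_i\widetilde{g}_i$ with the same zero sets as the $g_i$ but decaying at certain subsets $V_i\subseteq V$ faster than $f$ does (in the sense of a precise order-of-upper-bound-vanishing comparison, proved via a du Bois--Reymond type result). The ideal $\kernel{\alpha}+\sum_i\igen{\overline{g}_i}$ then lies in $\rinfinil{\cspace}{\cspace'}$ (same zero set) but cannot contain $f$, since any expression $f=c+\sum c_j\overline{g}_j$ with $c\in\kernel\alpha$ is ruled out by the decay estimates. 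This is the missing idea: you must manufacture a single ideal in $\rinfinil{\cspace}{\cspace'}$ that excludes $f$, and the only leverage available is vanishing rates, not zero loci. Point-determinedness enters in step 2 of the appendix, to translate ``$a$ has nonzero germ at $\cspace'$'' into the nonemptiness of the sets $V_i$ on which the decay comparison is performed.
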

\begin{proof} Let $\ideal\in\rinfinil{\cspace}{\cspace'}$ and let $a,b\in\cring$, s.t.\@ $\phi(b)$ is invertible and $a b=0$. Clearly $\phi(b)$ is invertible $\Leftrightarrow$ $\igen{b}+\kernel{\phi}=\cring$. Since $\iradical{-}$ preserves inclusions of ideals, $\cring=\igen{b}+\iradical{\ideal}\leq\iradical{\igen{b}}+\iradical{\ideal}\leq\iradical{\igen{b}+\ideal}$, i.e.\@ $\iradical{\igen{b}+\ideal}=\cring$. This implies $\igen{b}+\ideal=\cring$. Indeed, choosing generators for $\cring$ we have a surjective $\alpha\colon\cinfty(\mathbb R^S)\rightarrow\cring$, and using the fact that $\iradical{\alpha^{-1}(\igen{b}+\ideal)}=\alpha^{-1}(\iradical{\igen{b}+\ideal})$ (Lemma \ref{SurjectiveFunctoriality}) we conclude that $\iradical{\alpha^{-1}(\igen{b}+\ideal)}=\cinfty(\mathbb R^S)$. From Lemma 2.2 in \cite{MR86} it follows that $\exists c\in\alpha^{-1}(\igen{b}+\ideal)$, s.t.\@ $c$ vanishes nowhere in $\cinfty(\mathbb R^S)$, i.e.\@ $c$ is invertible, meaning $\alpha^{-1}(\igen{b}+\ideal)=\cinfty(\mathbb R^S)$. Since $b$ becomes invertible in $\cring/\ideal$, and $a b=0$, it must be that $a\in\ideal$.

To prove the second part of the proposition we need to look at rates of vanishing at $\cspace'$ of upper bounds of functions. The idea is simple: if the upper bound of a function vanishes faster than any infinite order of vanishing, the function must be $0$ in a neighbourhood of the closed set. The details are rather technical and we collect them in Appendix \ref{ProofGermsThroughNeighbourhoods}.\end{proof}

From Lemma \ref{Facts} we know that the sets of ideals $\rfornil{\cspace}{\cspace'}$ and $\rinfinil{\cspace}{\cspace'}$ are closed with respect to finite intersections, hence they are fundamental systems of neighbourhoods for some linear topologies on $\cring$.  Proposition \ref{GermsThroughNeighbourhoods} tells us that $\zerogermin{\cspace}{\cspace'}\leq\underset{\ideal\in\rinfinil{\cspace}{\cspace'}}\bigcap\ideal$, i.e.\@ $\zerogermin{\cspace}{\cspace'}$ is in the closure of $\{0\}\subseteq\cring$. Altogether we have the following definition.

\begin{definition} Let $\cring\rightarrow\cring'$ be a surjective morphism in $\allrings$, and let $\cspace'\subseteq\cspace$ be the corresponding closed embedding in $\cschemes$. We write $(\cring/\zerojetin{\cspace}{\cspace'})^\tau$, $(\cring/\zerogermin{\cspace}{\cspace'})^\tau$ to mean the corresponding $\cinfty$-rings, equipped with the linear topologies given by $\rfornil{\cspace}{\cspace'}$ and  $\rinfinil{\cspace}{\cspace}$ respectively. 
\end{definition}
It is clear that the topology on $(\cring/\zerojetin{\cspace}{\cspace'})^\tau$ is separated. From Proposition \ref{GermsThroughNeighbourhoods} it follows that also $(\cring/\zerogermin{\cspace}{\cspace'})^\tau$ has separated topology, provided $\cring$ is finitely generated and point-determined.

\begin{definition} {\it The pro-nilpotent neighbourhood of $\cspace'$ in $\cspace$} is the pre-sheaf on $\cschemes$ defined as follows: $\forall\cspace''\in\cschemes$
	 \begin{equation}\label{FormalNeighbourhood}\fornein{\cspace}{\cspace'}(\cspace''):=
	 \hom_{\rm cont}((\cring/\zerojetin{\cspace}{\cspace'})^\tau,\cinfty(\cspace'')),\end{equation}
where $\hom_{\rm cont}$ stands for continuous $\cinfty$-morphisms, with $\cinfty(\cspace'')$ equipped with the discrete topology. Similarly, {\it the $\infty$-nilpotent neighbourhood of $\cspace'$ in $\cspace$} is the pre-sheaf
	\begin{equation}\label{GermNeighbourhood}\infinein{\cspace}{\cspace'}(\cspace''):=
	\hom_{\rm cont}((\cring/\zerogermin{\cspace}{\cspace'})^\tau,\cinfty(\cspace'')).\end{equation}
\end{definition}
There are of course the natural morphisms
	\begin{equation*}\fornein{\cspace}{\cspace'}\longrightarrow\resh{\spec(\cring/\zerojetin{\cspace}{\cspace'})},\quad
	\infinein{\cspace}{\cspace'}\longrightarrow\resh{\spec(\cring/\zerogermin{\cspace}{\cspace'})},\end{equation*}
which are not isomorphisms in general. In fact $\resh{\spec(\cring/\zerojetin{\cspace}{\cspace'})}$ is a reflection of $\fornein{\cspace}{\cspace'}$ in the subcategory of representable pre-sheaves (\cite{Bo94}, Def.\@ 3.1.1). Similarly $\resh{\spec(\cring/\zerogermin{\cspace}{\cspace'})}$ is a reflection of $\infinein{\cspace}{\cspace'}$ in the subcategory of representable pre-sheaves over $\cspace$,\footnote{Notice that, different from the formal case, we get a reflection in the category over $\cspace$. This is because every power series can be summed to a $\cinfty$-function (Borel lemma), but in the $\infty$-case we might have obstructions to integrability. Questions of integrability become important in \cite{BP2}.}  provided $\cring$ is finitely generated and point-determined (Proposition \ref{GermsThroughNeighbourhoods}). All this is standard, and we provide the following lemma only for completeness.

\begin{lemma} Let $\mathcal C$ be a category having all colimits, and let $\mathcal F\colon\mathcal D\rightarrow\mathcal C$ be a diagram. Let $\yoneda\colon\mathcal C\rightarrow\hom(\mathcal C^{\op},\sets)$ be the covariant Yoneda embedding. Then $\underset{\mathcal D}\colim\;\mathcal F$ is a reflection of $\underset{\mathcal D}\colim\;\yoneda\circ\mathcal F$ in the subcategory of representable pre-sheaves.\end{lemma}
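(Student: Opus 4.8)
The plan is to unwind the Yoneda embedding and the definition of reflection, and to observe that everything reduces to the fact that $\yoneda$ preserves colimits that already exist in $\mathcal C$. Recall that a reflection of an object $P$ of a category $\mathcal E$ in a full subcategory $\mathcal E_0 \subseteq \mathcal E$ consists of an object $R \in \mathcal E_0$ together with a morphism $P \to R$ such that any morphism from $P$ into an object of $\mathcal E_0$ factors uniquely through it; here $\mathcal E = \hom(\mathcal C^{\op},\sets)$, $\mathcal E_0$ is the full subcategory of representable pre-sheaves, $P := \underset{\mathcal D}\colim\;\yoneda\circ\mathcal F$ (computed in $\mathcal E$, which has all colimits), and the candidate reflection is $\yoneda(\underset{\mathcal D}\colim\;\mathcal F)$.

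First I would fix the cocone: since $\mathcal C$ has all colimits, let $C := \underset{\mathcal D}\colim\;\mathcal F$ with structural morphisms $\iota_d\colon \mathcal F(d)\to C$. Applying $\yoneda$ gives a cocone $\yoneda(\iota_d)\colon \yoneda\mathcal F(d)\to\yoneda(C)$ over $\yoneda\circ\mathcal F$, hence by the universal property of $P$ a canonical morphism $u\colon P\to\yoneda(C)$ in $\mathcal E$. This $u$ is the unit of the putative reflection. Next, given any representable $\yoneda(X)$ and any morphism $g\colon P\to\yoneda(X)$, precomposing with the colimit coprojections $P\to\mathcal F(d)$ — wait, rather, the coprojections $\yoneda\mathcal F(d)\to P$ — yields a cocone $\yoneda\mathcal F(d)\to\yoneda(X)$ over $\yoneda\circ\mathcal F$; by the (full and faithful) Yoneda lemma this is the image under $\yoneda$ of a cocone $\mathcal F(d)\to X$ in $\mathcal C$, which by the universal property of $C=\colim\,\mathcal F$ factors uniquely through a morphism $C\to X$, i.e.\@ through $\yoneda(C\to X)\colon\yoneda(C)\to\yoneda(X)$. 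One then checks this $\yoneda(C)\to\yoneda(X)$ composed with $u$ recovers $g$ (both are cocones under $\yoneda\circ\mathcal F$ with the same legs, so they agree by the universal property of $P$), and uniqueness follows from fullness and faithfulness of $\yoneda$ together with uniqueness of $C\to X$.

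The only mild subtlety — and the step I expect to require the most care in writing — is the bookkeeping of \emph{which} universal property is being invoked at each stage: $P$ is a colimit in the pre-sheaf category $\mathcal E$ (so it has a universal property as a cocone vertex in $\mathcal E$), while $C$ is a colimit in $\mathcal C$ (universal as a cocone vertex in $\mathcal C$), and the passage between cocones-into-representables in $\mathcal E$ and cocones in $\mathcal C$ is exactly the Yoneda lemma applied leg-by-leg, using that $\yoneda$ is full and faithful. Once that dictionary is set up the argument is formal. I would present it as: (i) produce $u$; (ii) produce the factorization of an arbitrary $g\colon P\to\yoneda(X)$; (iii) check compatibility with $u$; (iv) check uniqueness. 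No computation beyond chasing these universal properties is needed, and nothing about $\cinfty$-rings enters — the lemma is pure category theory, stated here only because it is applied to $\mathcal C=\cschemes$ (which has all colimits, being opposite to the complete category $\allrings$) with $\mathcal F$ the diagram of $\cinfty$-rings $\cring/\zerojetin{\cspace}{\cspace'}$ or $\cring/\zerogermin{\cspace}{\cspace'}$.
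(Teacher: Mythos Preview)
Your proposal is correct and follows essentially the same route as the paper: both arguments identify $\hom(\underset{\mathcal D}\colim\;\yoneda\circ\mathcal F,\yoneda(X))$ with cocones $\{\yoneda\mathcal F(D)\to\yoneda(X)\}$ and then, via full faithfulness of $\yoneda$, with cocones $\{\mathcal F(D)\to X\}$ in $\mathcal C$, from which the reflection property is immediate. The paper compresses this into two sentences, whereas you spell out the unit, the factorization, and the uniqueness check separately; but the content is the same.
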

\begin{proof} Let $C\in\mathcal C$, morphisms $\underset{\mathcal D}\colim\;\yoneda\circ\mathcal F\rightarrow\yoneda(C)$ correspond to co-cones $\{\yoneda(\mathcal F(D))\rightarrow\yoneda(C)\}_{D\in\mathcal D}=\{\mathcal F(D)\rightarrow C\}_{D\in\mathcal D}$. Therefore there is a morphism $\underset{\mathcal D}\colim\;\yoneda\circ\mathcal F\rightarrow\yoneda(\underset{\mathcal D}\colim\;\mathcal F)$, which is unique up to a unique isomorphism of $\underset{\mathcal D}\colim\;\mathcal F$. For the same reason every representable co-cone on $\yoneda\circ\mathcal F$ uniquely factors through $\yoneda(\underset{\mathcal D}\colim\;\mathcal F)$.\end{proof}%

\smallskip

We can apply all this to the diagonal embedding of some $\cspace\in\cschemes$.

\begin{proposition} If $\cspace\in\cschemes$ is $\redun$-injective, the canonical $\resh{\cspace}\rightarrow\algered{\resh{\cspace}}$ is a co-equalizer of $\fornein{\cspace\times\cspace}{\diago{\cspace}}\rightrightarrows\resh{\cspace}$. The reflection of $\fornein{\cspace\times\cspace}{\diago{\cspace}}$ in $\cschemes$ is the formal neighbourhood of $\diago{\cspace}$ in $\cspace\times\cspace$.

If $\cspace\in\cschemes$ is $\redui$-injective, the canonical $\resh{\cspace}\rightarrow\infired{\resh{\cspace}}$ is a co-equalizer of $\infinein{\cspace\times\cspace}{\diago{\cspace}}\rightrightarrows\resh{\cspace}$. If $\cspace\times\cspace$ is finitely generated and point-determined, the reflection of $\infinein{\cspace\times\cspace}{\diago{\cspace}}$ in $\cschemes/(\cspace\times\cspace)$ is the germ of $\cspace\times\cspace$ at $\diago{\cspace}$.\end{proposition}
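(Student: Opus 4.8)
The plan is to reduce the statement to Proposition \ref{DeRhamAsQuotient}, Proposition \ref{GermsThroughNeighbourhoods}, and the reflection facts recorded in the discussion following Definition \ref{JetsAndGerms}, the only genuinely new ingredient being a computation that identifies the topological-ring presheaves of (\ref{FormalNeighbourhood}), (\ref{GermNeighbourhood}) with the colimit presheaves $\fornei{\diago{\cspace}}$, $\infinei{\diago{\cspace}}$ of Definition \ref{TwoNeighborhoods}. The mechanism is the elementary fact that a $\cinfty$-morphism out of a linearly topologized $\cinfty$-ring into a discrete one is continuous if and only if its kernel is open, hence contains one of the ideals in the chosen fundamental system, hence factors through the quotient by that ideal (and conversely any such factoring is continuous). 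Applied to $(\cinfty(\cspace\times\cspace)/\zerojetin{\cspace\times\cspace}{\diago{\cspace}})^\tau$, whose defining ideals are the images of the $\ideal\in\rfornil{\cspace\times\cspace}{\diago{\cspace}}$ — all of which contain $\zerojetin{\cspace\times\cspace}{\diago{\cspace}}=\bigcap_{\ideal}\ideal$, so the relevant quotients exist — this gives, naturally in $\cspace''\in\cschemes$,
\begin{equation*}\fornein{\cspace\times\cspace}{\diago{\cspace}}(\cspace'')\cong\underset{\ideal\in\rfornil{\cspace\times\cspace}{\diago{\cspace}}}\colim\;\hom_{\allrings}(\cinfty(\cspace\times\cspace)/\ideal,\cinfty(\cspace'')),\end{equation*}
a filtered colimit (Proposition \ref{FilteredDiagrams}).

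Now, since $\kernel{(\cinfty(\cspace\times\cspace)\to\cinfty(\diago{\cspace}))}=\zerovalue{\diago{\cspace}}$ and $\nradical{\zerovalue{\diago{\cspace}}}=\maxia{\diago{\cspace}}$, comparing the definition of $\rfornil{\cspace}{\cspace'}$ with (\ref{SetsOfIdeals}) gives $\rfornil{\cspace\times\cspace}{\diago{\cspace}}=\fornil{\diago{\cspace}}$, so the right-hand side above is $\fornei{\diago{\cspace}}(\cspace'')$ by Definition \ref{TwoNeighborhoods}. The $\infty$-case is identical, with $\iradical{-}$ replacing $\nradical{-}$ and $\rinfinil{\cspace\times\cspace}{\diago{\cspace}}=\infinil{\diago{\cspace}}$, except that to realize the defining ideals as quotients of $\cinfty(\cspace\times\cspace)/\zerogermin{\cspace\times\cspace}{\diago{\cspace}}$ one must invoke the inclusion (\ref{TwoVersionsOfGerms}) of Proposition \ref{GermsThroughNeighbourhoods} — which needs no extra hypothesis — to guarantee $\zerogermin{\cspace\times\cspace}{\diago{\cspace}}\subseteq\ideal$ for every $\ideal\in\rinfinil{\cspace\times\cspace}{\diago{\cspace}}$. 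Finally, both the canonical map $\fornein{\cspace\times\cspace}{\diago{\cspace}}\to\resh{\cspace\times\cspace}$ and the canonical map $\fornei{\diago{\cspace}}\to\resh{\cspace\times\cspace}$ are assembled from the closed embeddings $\spec(\cinfty(\cspace\times\cspace)/\ideal)\hookrightarrow\cspace\times\cspace$ (which factor through $\spec(\cinfty(\cspace\times\cspace)/\zerojetin{\cspace\times\cspace}{\diago{\cspace}})$), so the isomorphism respects these, and hence respects the two maps to $\resh{\cspace}$ obtained by postcomposing with the projections; likewise in the $\infty$-case. Thus $\fornein{\cspace\times\cspace}{\diago{\cspace}}\cong\fornei{\diago{\cspace}}$ and $\infinein{\cspace\times\cspace}{\diago{\cspace}}\cong\infinei{\diago{\cspace}}$ as objects over $\resh{\cspace}\times\resh{\cspace}$, compatibly with the diagrams (\ref{InfinityRelation}).

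With these identifications the co-equalizer assertions are obtained by transporting Proposition \ref{DeRhamAsQuotient} along the isomorphisms: if $\cspace$ is $\redun$-injective then $\resh{\cspace}\to\algered{\resh{\cspace}}$ is a co-equalizer of $\fornein{\cspace\times\cspace}{\diago{\cspace}}\rightrightarrows\resh{\cspace}$, and if $\cspace$ is $\redui$-injective then $\resh{\cspace}\to\infired{\resh{\cspace}}$ is a co-equalizer of $\infinein{\cspace\times\cspace}{\diago{\cspace}}\rightrightarrows\resh{\cspace}$ — no finiteness or point-determinedness enters here, since only the inclusion (\ref{TwoVersionsOfGerms}) is used. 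For the reflection assertions I would appeal to the facts recorded after Definition \ref{JetsAndGerms}: $\resh{\spec(\cinfty(\cspace\times\cspace)/\zerojetin{\cspace\times\cspace}{\diago{\cspace}})}$ is the reflection of $\fornein{\cspace\times\cspace}{\diago{\cspace}}$ in the representable pre-sheaves, and — when $\cinfty(\cspace\times\cspace)$ is finitely generated and point-determined, so that the equality in Proposition \ref{GermsThroughNeighbourhoods} holds — $\resh{\spec(\cinfty(\cspace\times\cspace)/\zerogermin{\cspace\times\cspace}{\diago{\cspace}})}$ is the reflection of $\infinein{\cspace\times\cspace}{\diago{\cspace}}$ in the representable pre-sheaves over $\cspace\times\cspace$ (the ``over $\cspace$'' of the general statement specializing to ``over $\cspace\times\cspace$''). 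By Definition \ref{JetsAndGerms} these schemes are $\Jet{\cspace\times\cspace}{\diago{\cspace}}$, the formal (infinite-jet) neighbourhood of $\diago{\cspace}$ in $\cspace\times\cspace$, and $\Germ{\cspace\times\cspace}{\diago{\cspace}}$, the germ of $\cspace\times\cspace$ at $\diago{\cspace}$, which is exactly the claim.

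The only step I expect to require care is the first one: making the continuity criterion for $\cinfty$-morphisms out of a linearly topologized $\cinfty$-ring precise and checking it dovetails with the filtered-colimit presentation of $\fornei{\diago{\cspace}}$, $\infinei{\diago{\cspace}}$, together with the purely bookkeeping identities $\rfornil{\cspace\times\cspace}{\diago{\cspace}}=\fornil{\diago{\cspace}}$ and $\rinfinil{\cspace\times\cspace}{\diago{\cspace}}=\infinil{\diago{\cspace}}$ and the role of Proposition \ref{GermsThroughNeighbourhoods} in the $\infty$-case. Everything after the two presheaves are identified with those of Section \ref{DeRhamSpaces} is a formal consequence of results already established.
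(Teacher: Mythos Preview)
Your proposal is correct and follows exactly the paper's approach: identify $\fornein{\cspace\times\cspace}{\diago{\cspace}}$ (resp.\ $\infinein{\cspace\times\cspace}{\diago{\cspace}}$) with the colimit presheaf $\fornei{\diago{\cspace}}$ (resp.\ $\infinei{\diago{\cspace}}$) via the continuity criterion for maps into a discrete target, then invoke Proposition~\ref{DeRhamAsQuotient} for the co-equalizer claim and the discussion following the definition of $\fornein{\cspace}{\cspace'}$, $\infinein{\cspace}{\cspace'}$ for the reflections. The paper compresses the first step into the phrase ``coincides with (\ref{FormalNeighbourhood}) with the appropriate change of notation,'' whereas you have correctly unpacked it, including the role of the inclusion (\ref{TwoVersionsOfGerms}) in the $\infty$-case.
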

\begin{proof} If $\cspace$ is $\redun$-injective, from Proposition \ref{DeRhamAsQuotient} we know that $\resh{\cspace}\rightarrow\algered{\resh{\cspace}}$ is a co-equalizer of $\fornei{\diago{\cspace}}\rightrightarrows\resh{\cspace}$, where 
	\begin{equation*}\fornei{\diago{\cspace}}(\cspace')=
	\underset{\ideal\in\rinfinil{\cspace\times\cspace}{\cspace}}\colim\hom_{\allrings}(\cinfty(\cspace\times\cspace)/\ideal,\cinfty(\cspace')),\end{equation*}
which coincides with (\ref{FormalNeighbourhood}) with the appropriate change of notation. Similarly, assuming $\cspace$ is $\redui$-injective, we use Proposition \ref{DeRhamAsQuotient} to arrive at (\ref{GermNeighbourhood}).\end{proof}%

\begin{example} Let $\cmanif$ be a manifold embeddable into some $\mathbb R^n$. Then it is finitely presented (\cite{MR91} Thm.\@ I.2.3) and point determined (\cite{MR91} Thm.\@ I.4.2), moreover $\cmanif\times\cmanif$, computed in $\cschemes$, is also a manifold (\cite{MR91}, Prop.\@ I.2.6). We have already seen (Example \ref{ManifoldsInjective}) that, being a retract of an open subset in $\mathbb R^n$, $\cmanif$ is $\redui$-injective. Therefore $\resh{\cmanif}\rightarrow\infired{\cmanif}$ is a coequalizer of $\infinein{\cmanif\times\cmanif}{\diago{\cmanif}}\rightrightarrows\resh{\cmanif}$, with the reflection of $\infinein{\cmanif\times\cmanif}{\diago{\cmanif}}$ in representable pre-sheaves being the germ of $\cmanif\times\cmanif$ at $\diago{\cmanif}$.\end{example}

\subsection{De Rham groupoids}\label{DeRhamGroupoids}

So far we have considered only pre-sheaves on $\cschemes$. Now we would like to have a Grothendieck topology and work with sheaves.

\begin{definition} (E.g.\@ \cite{MR91} \S  VI.1) Let $\fgrings\subset\allrings$ be the full subcategory of finitely generated $\cinfty$-rings. {\it The Zariski topology} on $\fschemes$ is defined as follows: $\forall\cspace\in\fschemes$ a set of morphisms $\{\Phi_i\colon\cspace_i\rightarrow\cspace\}_{i=1}^k$ $k\in\mathbb Z_{\geq 0}$ is {\it a covering family}, if $\forall i\exists a_i\in\cinfty(\cspace)$, s.t.\@ $\phi_i\colon\cinfty(\cspace)\rightarrow\cinfty(\cspace_i)$ is the universal morphism inverting $a_i$, and $\underset{1\leq i\leq k}\sum\;\igen{a_i}=\cinfty(\cspace)$.
\end{definition}
As usual with Zariski topologies, this topology is sub-canonical (e.g.\@ \cite{MR91} Lemma VI.1.3). We denote all $\sets$-valued functors on $\fschemes$ by $\Pre$, and the full subcategory of sheaves by $\Shea$.

\begin{example} Let $\cspace'\subseteq\cspace$ be a closed embedding in $\fschemes$. We have constructed two neighbourhoods of $\resh{\cspace'}$ in $\resh{\cspace}$: the pro-nilpotent $\fornein{\cspace}{\cspace'}$ and the $\infty$-nilpotent $\infinein{\cspace}{\cspace'}$. From Proposition \ref{FilteredDiagrams} we know that these pre-sheaves are colimits of filtered diagrams of representable pre-sheaves. Since each covering family in Zariski topology is finite and filtered colimits commute with finite limits, we conclude that both $\fornein{\cspace}{\cspace'}$ and $\infinein{\cspace}{\cspace'}$ are in fact sheaves. In general the sheafification functor commutes with arbitrary colimits, hence these neighbourhoods are also colimits computed in the category of sheaves. Moreover, they are sub-sheaves of $\resh{\cspace}$, i.e.\@ $\forall\cspace''\in\fschemes$ the maps of sets
	\begin{equation*}\xymatrix{\fornein{\cspace}{\cspace'}(\cspace'')\ar[r] & (\resh{\cspace})(\cspace'') & 
	\infinein{\cspace}{\cspace'}(\cspace'')\ar[l]}\end{equation*}
are injective. Indeed, let $\Phi\in\infinein{\cspace}{\cspace'}(\cspace'')$. By definition 
	\begin{equation*}\infinein{\cspace}{\cspace'}(\cspace''):=
	\underset{\ideal\in\rinfinil{\cspace}{\cspace'}}\colim\hom_{\allrings}(\cinfty(\cspace)/\ideal,\cinfty(\cspace'')),\end{equation*}
hence $\Phi$ is given by $\phi\colon\cinfty(\cspace)/\ideal\rightarrow\cinfty(\cspace'')$ for some $\ideal\in\rinfinil{\cspace}{\cspace'}$ (this is just to say that a singleton is a compact object in $\sets$). Having two
	\begin{equation*}\phi_1\colon\cinfty(\cspace)/\ideal_1\longrightarrow\cinfty(\cspace''),\quad
	\phi_2\colon\cinfty(\cspace)/\ideal_2\longrightarrow\cinfty(\cspace''),\end{equation*}
s.t.\@ $\phi_1$, $\phi_2$ are mapped to the same element of $\hom_{\allrings}(\cinfty(\cspace),\cinfty(\cspace'')$, we see that their images in $\hom_{\allrings}(\cinfty(\cspace)/\ideal_1\cap\ideal_2,\cinfty(\cspace''))$ are also equal, i.e.\@ $\phi_1=\phi_2$ as elements of $\infinein{\cspace}{\cspace'}$. The case of $\fornein{\cspace}{\cspace'}$ is similar. The following lemma shows functoriality of these sub-sheaves.\end{example}

\begin{lemma}\label{PairFunctoriality} Let $\embeddings{\cschemes}$ be the category of closed embeddings $\Phi\colon\cspace'\hookrightarrow\cspace$ in $\cschemes$ with morphisms $\Phi_1\rightarrow\Phi_2$ being commutative squares 
	\begin{equation}\label{MorPair}\xymatrix{\cspace'_1\ar[d]_{\Phi_1}\ar[rr] && \cspace'_2\ar[d]^{\Phi_2}\\ \cspace_1\ar[rr] && \cspace_2.}\end{equation} 
Let $\shpairs$ be the category of inclusions of sub-presheaves on $\cschemes$. The constructions $\fornein{\cspace}{\cspace'}\hookrightarrow\resh{\cspace}$, $\infinein{\cspace}{\cspace'}\hookrightarrow\resh{\cspace}$ extend to functors 
	\begin{equation*}\embeddings{\cschemes}\rightarrow\shpairs,\quad\embeddings{\cschemes}\rightarrow\shpairs.\end{equation*}
\end{lemma}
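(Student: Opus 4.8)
The plan is to construct the two functors explicitly on objects and morphisms, then check the inclusions $\fornein{\cspace}{\cspace'}\hookrightarrow\resh{\cspace}$, $\infinein{\cspace}{\cspace'}\hookrightarrow\resh{\cspace}$ are natural. On objects there is nothing to do: an object $\Phi\colon\cspace'\hookrightarrow\cspace$ of $\embeddings{\cschemes}$ is sent to the already-constructed inclusion of sub-presheaves $\infinein{\cspace}{\cspace'}\hookrightarrow\resh{\cspace}$ (respectively $\fornein{\cspace}{\cspace'}\hookrightarrow\resh{\cspace}$), which was shown to be an inclusion of sub-presheaves in the Example preceding the lemma. So the content is in the action on morphisms.

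First I would fix a morphism $\Phi_1\to\Phi_2$ in $\embeddings{\cschemes}$, i.e.\ a commutative square (\ref{MorPair}); on rings this is a commutative square of surjections $\cinfty(\cspace_2)\to\cinfty(\cspace_1)$, $\cinfty(\cspace_2)\to\cinfty(\cspace'_2)$, $\cinfty(\cspace_1)\to\cinfty(\cspace'_1)$, $\cinfty(\cspace'_2)\to\cinfty(\cspace'_1)$, with the composite $\cinfty(\cspace_2)\to\cinfty(\cspace'_1)$ being the common value. Write $\psi\colon\cinfty(\cspace_2)\to\cinfty(\cspace_1)$ for the horizontal map. The key point is that $\psi$ carries the defining system of ideals for $\cspace'_2\subseteq\cspace_2$ into the one for $\cspace'_1\subseteq\cspace_1$ in an appropriate sense: more precisely, for $\ideal_2\in\rinfinil{\cspace_2}{\cspace'_2}$ one has $\psi(\ideal_2)\subseteq\psi^{-1}(\ideal_2)$'s image, and I want a cofinality/compatibility statement letting me push forward the colimit. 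The cleanest route: given $\ideal_2\in\rinfinil{\cspace_2}{\cspace'_2}$, consider the ideal $\ideal_1\leq\cinfty(\cspace_1)$ generated by $\psi(\ideal_2)$ together with $\kernel{(\cinfty(\cspace_1)\to\cinfty(\cspace'_1))}$; using Proposition \ref{StrongFunctoriality} and Lemma \ref{WeakFunctoriality} one checks $\iradical{\ideal_1}=\iradical{\kernel{(\cinfty(\cspace_1)\to\cinfty(\cspace'_1))}}$, so $\ideal_1\in\rinfinil{\cspace_1}{\cspace'_1}$, and $\psi$ descends to $\cinfty(\cspace_2)/\ideal_2\to\cinfty(\cspace_1)/\ideal_1$. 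These assemble, over the filtered index categories, into a map of colimits $\infinein{\cspace_2}{\cspace'_2}\to\infinein{\cspace_1}{\cspace'_1}$ — wait, the variance: a morphism $\cspace_1\to\cspace_2$ of schemes gives $\resh{\cspace_1}\to\resh{\cspace_2}$, so the functor is covariant in $\embeddings{\cschemes}$ and I should be pushing forward along $\cspace_1\to\cspace_2$, i.e.\ the induced map goes $\infinein{\cspace_1}{\cspace'_1}\to\infinein{\cspace_2}{\cspace'_2}$, which is even easier: the scheme morphism $\cspace_1\to\cspace_2$ restricts any $\cspace''\to(\text{neighbourhood of }\cspace'_1)$ to $\cspace''\to(\text{neighbourhood of }\cspace'_2)$, because the ring map $\cinfty(\cspace_2)/\ideal_2\to\cinfty(\cspace_1)/\psi\text{-image}$ shows each defining ideal of $\cspace'_1$ pulls back from one of $\cspace'_2$. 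I would do the pullback version: for $\ideal_2\in\rinfinil{\cspace_2}{\cspace'_2}$, the ideal $\psi^{-1}(\ideal_2)\cap(\text{anything needed})$ lies in $\rinfinil{\cspace_1}{\cspace'_1}$ by Proposition \ref{StrongFunctoriality}, giving the comparison of filtered systems and hence of colimits.

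Then I would verify naturality of the square expressing that the new morphism on neighbourhoods is compatible with the morphisms $\resh{\cspace_1}\to\resh{\cspace_2}$ and the inclusions $\infinein{\cspace_i}{\cspace'_i}\hookrightarrow\resh{\cspace_i}$ — this is immediate by construction, since every map in sight is induced by the single ring map $\psi$ (or its quotients), so the square commutes on each $\cinfty(\cspace'')$-level before passing to colimits. Functoriality in $\embeddings{\cschemes}$ (identities and composition) then reduces to the corresponding statements for $\psi$, which are obvious. The identical argument with $\nradical{-}$, $\rfornil{\cspace}{\cspace'}$ and Proposition \ref{StrongFunctoriality}'s nilpotent half handles the $\fornein{\cspace}{\cspace'}$ case.

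The main obstacle I anticipate is getting the index-category bookkeeping right: one must show that the assignment $\ideal_2\mapsto(\text{an ideal in }\rinfinil{\cspace_1}{\cspace'_1})$ is functorial for the inclusion orderings and cofinal enough that the induced map on filtered colimits is well-defined and independent of choices. This is exactly where strong functoriality (Proposition \ref{StrongFunctoriality}) does the real work — without $\phi^{-1}(\iradical{0})=\iradical{\kernel{\phi}}$ one could not guarantee $\psi^{-1}(\ideal_2)$ (or the relevant modification) still has the right $\infty$-radical, i.e.\ still defines a genuine $\infty$-nilpotent neighbourhood of $\cspace'_1$. Everything else is formal nonsense about colimits of diagrams of representable presheaves and the naturality of Yoneda.
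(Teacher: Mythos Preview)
Your approach is essentially the paper's: show that the composite $\infinein{\cspace_1}{\cspace'_1}\to\resh{\cspace_1}\to\resh{\cspace_2}$ factors through the sub-presheaf $\infinein{\cspace_2}{\cspace'_2}$ by checking, ideal by ideal, that the relevant kernel lands in $\rinfinil{\cspace_2}{\cspace'_2}$, with Proposition~\ref{StrongFunctoriality} doing the work. Two concrete things need fixing, though.

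First, your bookkeeping is tangled. The horizontal maps in (\ref{MorPair}) are arbitrary morphisms in $\cschemes$, so on rings $\psi\colon\cinfty(\cspace_2)\to\cinfty(\cspace_1)$ need \emph{not} be surjective; only the vertical maps $\phi_1,\phi_2$ are. More seriously, after you correct the variance you still write ``for $\ideal_2\in\rinfinil{\cspace_2}{\cspace'_2}$, the ideal $\psi^{-1}(\ideal_2)$\ldots'' --- this does not type-check, since $\ideal_2$ already sits in the domain of $\psi$. What you must do (and what the paper does) is start from $\ideal_1\in\rinfinil{\cspace_1}{\cspace'_1}$, let $\ideal'_1:=\psi^{-1}(\ideal_1)=\kernel{(\cinfty(\cspace_2)\to\cinfty(\cspace_1)/\ideal_1)}$, and show this kernel contains some $\ideal_2\in\rinfinil{\cspace_2}{\cspace'_2}$.

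Second, your ``$\cap(\text{anything needed})$'' hides a genuine step. Proposition~\ref{StrongFunctoriality} gives $\iradical{\ideal'_1}=\kernel{(\cinfty(\cspace_2)\to\redui(\cinfty(\cspace'_1)))}\geq\kernel{\phi_2}$, but this inequality can be strict (the horizontal map $\cspace'_1\to\cspace'_2$ need not be surjective on points), so $\ideal'_1$ itself need not lie in $\rinfinil{\cspace_2}{\cspace'_2}$. The paper takes $\ideal_2:=\ideal'_1\cap\kernel{\phi_2}$ and invokes Lemma~\ref{Facts} to get $\iradical{\ideal_2}=\iradical{\ideal'_1}\cap\iradical{\kernel{\phi_2}}=\iradical{\kernel{\phi_2}}$. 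Finally, the paper's closing observation is worth absorbing: once you know the factorisation exists, functoriality (identities, composition, naturality square) is automatic precisely because $\infinein{\cspace_2}{\cspace'_2}\hookrightarrow\resh{\cspace_2}$ is a monomorphism --- no colimit bookkeeping required.
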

\begin{proof} Consider (\ref{MorPair}), and apply the covariant Yoneda embedding. We claim that the composite $\infinein{\cspace_1}{\cspace'_1}\rightarrow\resh{\cspace_2}$ factors through $\infinein{\cspace_2}{\cspace'_2}$. It is enough to show that $\forall\ideal_1\in\rinfinil{\cspace_1}{\cspace'_1}$ there is $\ideal_2\in\rinfinil{\cspace_2}{\cspace'_2}$, s.t.\@ $\cinfty(\cspace_2)\rightarrow\cinfty(\cspace_1)/\ideal_1$ has $\ideal_2$ in the kernel. Let $\ideal_1'\leq\cinfty(\cspace_2)$ be the kernel of $\cinfty(\cspace_2)\rightarrow\cinfty(\cspace_1)/\ideal_1$. Since $\iradical{\ideal_1}=\iradical{\kernel{\phi_1}}$, from Proposition \ref{StrongFunctoriality} we have that $\iradical{\ideal'_1}$ is the kernel of $\cinfty(\cspace_2)\rightarrow\redui(\cinfty(\cspace'_1))$. Then from (\ref{MorPair}) it is clear that $\iradical{\ideal'_1}\geq\kernel{\phi_2}$. From Lemma \ref{Facts} we have $\iradical{\ideal'_1\cap\kernel{\phi_2}}=\iradical{\ideal'_1}\cap\iradical{\kernel{\phi_2}}=\iradical{\kernel{\phi_2}}$, i.e.\@ $\ideal'_1\cap\kernel{\phi_2}\in\rinfinil{\cspace_2}{\cspace'_2}$. Putting $\ideal_2:=\ideal'_1\cap\kernel{\phi_2}$ we have shown that $\infinein{\cspace_1}{\cspace'_1}\rightarrow\resh{\cspace_2}$ factors through $\infinein{\cspace_2}{\cspace'_2}$. Functoriality now follows from the fact that $\infinein{\cspace_2}{\cspace'_2}$ is a sub-presheaf of $\resh{\cspace_2}$. The case of $\fornein{\cspace}{\cspace'}$ is analogous.\end{proof}

We recall a very familiar groupoid.

\begin{definition} {\it The pair groupoid on $\cspace\in\cschemes$} is $(\cspace,\cspace\times\cspace)$ with the source, target, identity morphisms being the left projection, right projection and the diagonal respectively. The composition morphism is given by the projection $\cspace^{\times^3}\rightarrow\cspace^{\times^2}$ on the first and last factors.

{\it The nerve of this groupoid} will be denoted by $\cspace^\bullet:=\{\cspace^{\times^k}\}_{k\in\mathbb N}$, with the simplicial structure morphisms given by all projections and diagonals.\end{definition}
Applying the covariant Yoneda embedding we get a groupoid object $(\resh{\cspace},\resh{\cspace}\times\resh{\cspace})$. We denote by $\fornein{\cspace^{\times^m}}{\cspace}$, $\infinein{\cspace^{\times^m}}{\cspace}$ the pro-nilpotent and $\infty$-nilpotent neighbourhoods of the main diagonal in $\resh{\cspace}^{\times^m}$.

\begin{lemma}\label{ProjectionNeigh} Let $\cspace\in\cschemes$, and let $m>n$ in $\mathbb Z_{\geq 2}$. Let $\projection\colon\cspace^{\times^m}\rightarrow\cspace^{\times^n}$ be projection on a factor. The images\footnote{Here we use the sheaf-theoretic notion of an image for (pre-)sheaves on $\cschemes$.} of $\fornein{\cspace^{\times^m}}{\cspace}\rightarrow\resh{\cspace}^{\times^n}$,  $\infinein{\cspace^{\times^m}}{\cspace}\rightarrow\resh{\cspace}^{\times^n}$ are $\fornein{\cspace^{\times^n}}{\cspace}$ and $\infinein{\cspace^{\times^n}}{\cspace}$ respectively.\end{lemma}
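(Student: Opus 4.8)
The plan is to analyze the projection $\projection\colon\cspace^{\times^m}\rightarrow\cspace^{\times^n}$ at the level of the defining ideals and use the functoriality from Lemma \ref{PairFunctoriality} together with strong functoriality (Proposition \ref{StrongFunctoriality}). Write $\injection\colon\cspace^{\times^n}\hookrightarrow\cspace^{\times^m}$ for a section of $\projection$ obtained by inserting copies of coordinates already present (so $\projection\circ\injection=\id$), chosen so that $\injection$ restricts to the identity on the main diagonals. Both $\projection$ and $\injection$ are morphisms in $\embeddings{\cschemes}$ between the diagonal embeddings $\diago{\cspace}\hookrightarrow\cspace^{\times^m}$ and $\diago{\cspace}\hookrightarrow\cspace^{\times^n}$, so by Lemma \ref{PairFunctoriality} they induce maps $\infinein{\cspace^{\times^m}}{\cspace}\rightleftarrows\infinein{\cspace^{\times^n}}{\cspace}$ compatible with the inclusions into $\resh{\cspace}^{\times^m}$, $\resh{\cspace}^{\times^n}$; likewise for the pro-nilpotent neighbourhoods. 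I will do the $\infty$-case; the nilpotent case is identical with $\iradical{-}$ replaced by $\nradical{-}$.

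The first step establishes that the image is contained in $\infinein{\cspace^{\times^n}}{\cspace}$. This is immediate: the square
\begin{equation*}\xymatrix{\diago{\cspace}\ar[d]\ar@{=}[rr] && \diago{\cspace}\ar[d]\\ \cspace^{\times^m}\ar[rr]^{\projection} && \cspace^{\times^n}}\end{equation*}
is a morphism in $\embeddings{\cschemes}$, so Lemma \ref{PairFunctoriality} gives a factorization $\infinein{\cspace^{\times^m}}{\cspace}\rightarrow\infinein{\cspace^{\times^n}}{\cspace}\hookrightarrow\resh{\cspace}^{\times^n}$; since $\infinein{\cspace^{\times^n}}{\cspace}$ is a sub-sheaf of $\resh{\cspace}^{\times^n}$, the sheaf-theoretic image of $\infinein{\cspace^{\times^m}}{\cspace}\rightarrow\resh{\cspace}^{\times^n}$ lies inside it.

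The second step is the reverse containment, and this is the point where I expect the real work. Apply Lemma \ref{PairFunctoriality} to the section $\injection$: the square with $\injection$ on the bottom and identity on the diagonal gives a morphism $\infinein{\cspace^{\times^n}}{\cspace}\rightarrow\infinein{\cspace^{\times^m}}{\cspace}$ over $\injection\colon\resh{\cspace}^{\times^n}\rightarrow\resh{\cspace}^{\times^m}$. Post-composing with $\projection$ and using $\projection\circ\injection=\id$ on $\resh{\cspace}^{\times^n}$, we see that every section of $\infinein{\cspace^{\times^n}}{\cspace}$ (over any test object) factors through the map $\infinein{\cspace^{\times^m}}{\cspace}\rightarrow\resh{\cspace}^{\times^n}$, whence $\infinein{\cspace^{\times^n}}{\cspace}$ is contained in the image. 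To make this rigorous I need to check, via the explicit colimit description $\infinein{\cspace}{\cspace'}(\cspace'')=\underset{\ideal\in\rinfinil{\cspace}{\cspace'}}\colim\,\hom_{\allrings}(\cinfty(\cspace)/\ideal,\cinfty(\cspace''))$, that $\injection$ indeed induces a well-defined map on these filtered colimits — i.e.\ that for $\ideal\in\rinfinil{\cspace^{\times^n}}{\cspace}$ its preimage (or a suitable ideal built from it as in the proof of Lemma \ref{PairFunctoriality}, namely $\injection^*\!{}^{-1}(\ideal)\cap\kernel{(\cinfty(\cspace^{\times^m})\to\cinfty(\diago{\cspace}))}$) lies in $\rinfinil{\cspace^{\times^m}}{\cspace}$, which is exactly what strong functoriality plus Lemma \ref{Facts} deliver. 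Combining the two containments, the image is precisely $\infinein{\cspace^{\times^n}}{\cspace}$, and the same argument verbatim — replacing $\iradical{-}$ by $\nradical{-}$ and citing the nilpotent halves of Proposition \ref{StrongFunctoriality} and Lemma \ref{Facts} — gives the statement for $\fornein{\cspace^{\times^m}}{\cspace}$.

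The main obstacle, such as it is, is bookkeeping: one must be careful that the chosen section $\injection$ genuinely is a morphism in $\embeddings{\cschemes}$ respecting the \emph{main} diagonals (inserting a repeated coordinate does this, inserting an unrelated one does not), and that "image" is taken in the sheaf sense so that the factorization through the sub-sheaf $\infinein{\cspace^{\times^n}}{\cspace}$ does capture the whole of it. No genuinely new analytic input is needed — everything reduces to the already-established strong functoriality of the $\infty$-radical and the intersection formula of Lemma \ref{Facts}.
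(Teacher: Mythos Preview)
Your proposal is correct and follows essentially the same approach as the paper: construct a section $\injection$ of $\projection$ that respects the main diagonals, observe that this makes the diagonal embedding $\cspace\hookrightarrow\cspace^{\times^n}$ a retract (in $\embeddings{\cschemes}$) of $\cspace\hookrightarrow\cspace^{\times^m}$, and then invoke Lemma~\ref{PairFunctoriality} to conclude. The paper compresses your two containment steps into a single retract argument, and does not spell out the ideal-theoretic verification you sketch (since that is already absorbed into the proof of Lemma~\ref{PairFunctoriality}), but the content is the same.
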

\begin{proof} Up to a permutation of the factors, we can assume that $\pi$ is the projection on the first $n$ factors in $\cspace^{\times^m}$. Mapping the $n$-th factor diagonally into $\cspace^{\times^{m-n+1}}$ we obtain a section $\injection\colon\cspace^{\times^n}\rightarrow\cspace^{\times^m}$ of $\projection$. It is easy to see that $\injection$ maps the main diagonal in $\cspace^{\times^n}$ to the main diagonal in $\cspace^{\times^m}$. In other words the diagonal embedding $\cspace\rightarrow\cspace^{\times^n}$ is a retract of the diagonal embedding $\cspace\rightarrow\cspace^{\times^m}$. Applying the infinitesimal neighbourhoods constructions we immediately conclude (Lemma \ref{PairFunctoriality}) that $\fornein{\cspace^{\times^n}}{\cspace}$ is the image of $\fornein{\cspace^{\times^m}}{\cspace}$, and correspondingly $\infinein{\cspace^{\times^n}}{\cspace}$ is the image of $\infinein{\cspace^{\times^m}}{\cspace}$.
\end{proof}%

Now we consider the projections $\fornein{\cspace^{\times^m}}{\cspace}\rightrightarrows\resh{\cspace}$, $\infinein{\cspace^{\times^m}}{\cspace}\rightrightarrows\resh{\cspace}$ on the first and last factors. The following lemma is almost obvious.

\begin{lemma}\label{ProductNeigh} Let $\cspace\in\cschemes$, $m,n\geq 2$. In $\Pre/\resh{\cspace}^{\times^{m+n-1}}$ the objects
	\begin{equation*}\forneired{\cspace^{\times^m}}{\cspace}\underset{\resh{\cspace}}\times\forneired{\cspace^{\times^n}}{\cspace}
	\longrightarrow \resh{\cspace}^{\times^{m+n-1}}, \quad
	\infineired{\cspace^{\times^m}}{\cspace}\underset{\resh{\cspace}}\times\infineired{\cspace^{\times^n}}{\cspace}
	\longrightarrow \resh{\cspace}^{\times^{m+n-1}}\end{equation*}
are isomorphic to $\forneired{\cspace^{\times^{m+n-1}}}{\cspace}$ and $\infineired{\cspace^{\times^{m+n-1}}}{\cspace}$ respectively.\end{lemma}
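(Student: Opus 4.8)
The plan is to reduce everything to a statement about ideals in $\cinfty$-rings and then invoke the strong functoriality of the $\infty$-radical (Proposition \ref{StrongFunctoriality}) together with the closure-under-intersection facts from Lemma \ref{Facts}, exactly as in the proof of Lemma \ref{PairFunctoriality}. I treat the $\infty$-case; the pro-nilpotent case is identical with $\iradical{-}$ replaced by $\nradical{-}$. Label the $m+n-1$ factors so that the first $m$ of them carry $\infineired{\cspace^{\times^m}}{\cspace}$ and the last $n$ carry $\infineired{\cspace^{\times^n}}{\cspace}$, with the $m$-th factor being the one that is shared (this is the meaning of the fibre product over $\resh{\cspace}$ via the ``last factor'' projection on the left and the ``first factor'' projection on the right). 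The two inclusions
	\begin{equation*}\infineired{\cspace^{\times^m}}{\cspace}\hookrightarrow\resh{\cspace}^{\times^m},\qquad
	\infineired{\cspace^{\times^n}}{\cspace}\hookrightarrow\resh{\cspace}^{\times^n}\end{equation*}
are monomorphisms of (pre-)sheaves (this was established in the Example preceding Lemma \ref{PairFunctoriality}), so their fibre product is a sub-presheaf of $\resh{\cspace}^{\times^{m+n-1}}$, and it suffices to identify it with the sub-presheaf $\infineired{\cspace^{\times^{m+n-1}}}{\cspace}$ objectwise, i.e.\@ to show that for each $\cspace''\in\cschemes$ the two subsets of $\hom_{\cschemes}(\cspace'',\cspace^{\times^{m+n-1}})=\hom_{\cschemes}(\cspace'',\cspace)^{\times^{m+n-1}}$ coincide.

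First I would handle the inclusion $\infineired{\cspace^{\times^{m+n-1}}}{\cspace}\subseteq\infineired{\cspace^{\times^m}}{\cspace}\times_{\resh{\cspace}}\infineired{\cspace^{\times^n}}{\cspace}$. This is the functoriality direction: the projection $\cspace^{\times^{m+n-1}}\to\cspace^{\times^m}$ onto the first $m$ factors carries the main diagonal to the main diagonal and admits the section that repeats the $m$-th coordinate, so the diagonal embedding $\cspace\hookrightarrow\cspace^{\times^m}$ is a retract of $\cspace\hookrightarrow\cspace^{\times^{m+n-1}}$; by Lemma \ref{PairFunctoriality} the composite $\infineired{\cspace^{\times^{m+n-1}}}{\cspace}\to\resh{\cspace}^{\times^m}$ factors through $\infineired{\cspace^{\times^m}}{\cspace}$, and likewise through $\infineired{\cspace^{\times^n}}{\cspace}$ on the last $n$ factors. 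The shared $m$-th coordinate of the two factorisations is literally the $m$-th coordinate of the original map, so the two agree over $\resh{\cspace}$ and we obtain the desired map into the fibre product.

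The substantive direction is the reverse inclusion, and I expect it to be the main obstacle. Take an element of the fibre product over some $\cspace''$: a tuple $(\phi_1,\dots,\phi_{m+n-1})$ of maps $\cspace''\to\cspace$ such that the truncation to the first $m$ coordinates lies in $\infineired{\cspace^{\times^m}}{\cspace}(\cspace'')$ and the truncation to the last $n$ coordinates lies in $\infineired{\cspace^{\times^n}}{\cspace}(\cspace'')$. Dualising, let $\alpha\colon\cinfty(\cspace^{\times^{m+n-1}})\to\cinfty(\cspace'')$ be the corresponding $\cinfty$-morphism and $K:=\kernel{\alpha}$; I must show $K\in\rinfinil{\cspace^{\times^{m+n-1}}}{\cspace}$, i.e.\@ $\iradical{K}$ equals the kernel of $\cinfty(\cspace^{\times^{m+n-1}})\to\redui(\cinfty(\cspace))$, which is $\maxii{\diago{\cspace}}$ for the main diagonal. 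Concretely, with generators $\{x_j\}_{j\in J}$ of $\cinfty(\cspace)$ and their $m+n-1$ pullbacks $x_j^{(1)},\dots,x_j^{(m+n-1)}$, the hypothesis on the first $m$ coordinates gives, via Proposition \ref{StrongFunctoriality} applied to $\alpha$ restricted to $\cinfty(\cspace^{\times^m})$, that $x_j^{(a)}-x_j^{(b)}\in\iradical{K}$ for all $1\le a,b\le m$; the hypothesis on the last $n$ coordinates gives $x_j^{(a)}-x_j^{(b)}\in\iradical{K}$ for all $m\le a,b\le m+n-1$; since the index $m$ is common to both ranges, transitivity (the $\infty$-radical being an ideal, hence closed under sums and differences) yields $x_j^{(a)}-x_j^{(b)}\in\iradical{K}$ for \emph{all} $1\le a,b\le m+n-1$. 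This is exactly the statement $\maxii{\diago{\cspace}}\subseteq\iradical{K}$; the opposite containment $\iradical{K}\subseteq\maxii{\diago{\cspace}}$ is automatic because the truncation hypotheses already force $K\subseteq\maxii{\diago{\cspace}}$ (equivalently, $\alpha$ factors the appropriate way coordinate-pairwise) and $\maxii{\diago{\cspace}}$ is its own $\infty$-radical by Lemma \ref{Iteration}. Hence $K\in\rinfinil{\cspace^{\times^{m+n-1}}}{\cspace}$, so $\alpha$ (equivalently the tuple $(\phi_1,\dots,\phi_{m+n-1})$) represents an element of $\infineired{\cspace^{\times^{m+n-1}}}{\cspace}(\cspace'')$. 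The only delicate point is making sure the ``shared coordinate'' bookkeeping is done correctly so that transitivity through index $m$ is legitimate — once that is set up, everything reduces to Propositions \ref{StrongFunctoriality} and \ref{InfiniteNilpotence} and Lemmas \ref{Facts}, \ref{Iteration}, \ref{PairFunctoriality}. The pro-nilpotent case is obtained by the same argument using $\nradical{-}$ and the second clause of Proposition \ref{StrongFunctoriality}.
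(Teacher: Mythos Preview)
Your overall strategy---treating both objects as sub-presheaves of $\resh{\cspace}^{\times^{m+n-1}}$ and comparing them pointwise via the kernel $K=\kernel{\alpha}$---is sound and genuinely different from the paper's argument. The paper instead commutes the filtered colimits defining $\infineired{\cspace^{\times^m}}{\cspace}$ and $\infineired{\cspace^{\times^n}}{\cspace}$ past the fibre product, obtains the fibre product as a colimit over pairs $(\ideal',\ideal'')$ with value $\resh{\spec(\cinfty(\cspace^{\times^{m+n-1}})/(\ideal'+\ideal''))}$, checks $\ideal'+\ideal''\in\rinfinil{\cspace^{\times^{m+n-1}}}{\cspace}$, and then proves the resulting comparison map is an isomorphism by showing the sums $\ideal'+\ideal''$ form a right-cofinal subdiagram (invoking Lemma~\ref{ProjectionNeigh}). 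Your elementwise transitivity argument through the shared index $m$ is a pleasant shortcut that avoids the cofinality step.

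There is, however, a genuine slip in the ``substantive direction''. You state that you must show $K\in\rinfinil{\cspace^{\times^{m+n-1}}}{\cspace}$, i.e.\ $\iradical{K}=\maxii{\diago{\cspace}}$, and then claim the inclusion $K\subseteq\maxii{\diago{\cspace}}$ is ``automatic'' from the truncation hypotheses. It is not: take $\cspace=\mathbb R$, $m=n=2$, $\cspace''=\spec(\mathbb R)$, and let $\alpha$ be evaluation at the origin in $\mathbb R^3$. Then $K=\zerovalue{0}$ contains, say, $x_1+x_2+x_3$, which does not lie in $\iradical{\ideal_\Delta}$ (its zero set is a plane not containing the diagonal), yet both truncations are in the required neighbourhoods. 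The point is that membership in $\infineired{\cspace^{\times^{m+n-1}}}{\cspace}(\cspace'')$ only requires $K\supseteq\ideal$ for \emph{some} $\ideal\in\rinfinil{\cspace^{\times^{m+n-1}}}{\cspace}$, not $K\in\rinfinil{\cspace^{\times^{m+n-1}}}{\cspace}$ itself. Your transitivity argument correctly gives $\ideal_\Delta\subseteq\iradical{K}$; from this, $\ideal:=K\cap\ideal_\Delta$ satisfies $\iradical{\ideal}=\iradical{K}\cap\iradical{\ideal_\Delta}=\iradical{\ideal_\Delta}$ by Lemma~\ref{Facts}, so $\ideal\in\rinfinil{\cspace^{\times^{m+n-1}}}{\cspace}$ and $K\supseteq\ideal$. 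Drop the false ``opposite containment'' paragraph and replace it with this one-line observation, and your proof is complete. (Incidentally, for the step $x_j^{(a)}-x_j^{(b)}\in\iradical{K}$ you only need the weak functoriality of Lemma~\ref{WeakFunctoriality}, not the full strength of Proposition~\ref{StrongFunctoriality}.)
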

\begin{proof} From functoriality of pullbacks we get $\infinein{\cspace^{\times^m}}{\cspace}\underset{\resh{\cspace}}\times\infinein{\cspace^{\times^n}}{\cspace}\rightarrow\resh{\cspace}^{\times^{m+n-1}}$. We claim this morphism factors through $\infinein{\cspace^{\times^{m+n-1}}}{\cspace}$. Both $\infinein{\cspace^{\times^{m}}}{\cspace}$ and $\infinein{\cspace^{\times^{n}}}{\cspace}$ are colimits in $\Pre$, the former is over a diagram parameterized by $\rinfinil{\cspace^{\times^m}}{\cspace}$, the latter is over a diagram parameterized by $\rinfinil{\cspace^{\times^n}}{\cspace}$. Consider the category $\rinfinil{\cspace^{\times^m}}{\cspace}\times\rinfinil{\cspace^{\times^n}}{\cspace}$. Composing the two projections to the factors with the functors to the category of pre-sheaves on $\fschemes$ we realize $\infinein{\cspace^{\times^{m}}}{\cspace}$ and $\infinein{\cspace^{\times^{n}}}{\cspace}$ as colimits over diagrams parameterized by the same category. Since product of filtered categories is again filtered, and colimits over filtered categories commute with finite limits (in $\Pre$), we can represent $\infinein{\cspace^{\times^m}}{\cspace}\underset{\resh{\cspace}}\times\infinein{\cspace^{\times^n}}{\cspace}$ as a colimit of
	\begin{equation*}\{\resh{\spec(\cinfty(\cspace^{\times^m})/\ideal')}\underset{\resh{\cspace}}\times
	\resh{\spec(\cinfty(\cspace^{\times^n})/\ideal'')}\},\end{equation*}
where $\ideal'$, $\ideal''$ run over $\rinfinil{\cspace^{\times^m}}{\cspace}$ and $\rinfinil{\cspace^{\times^n}}{\cspace}$ respectively. Since Yoneda embedding preserves limits, we can compute these pullbacks in $\cschemes$, and we obtain $\{\spec(\cinfty(\cspace^{\times^{m+n-1}})/\ideal'+\ideal'')\}$. Let $\ideal_{\Delta^m}$ be the ideal of the diagonal in $\cspace^{\times^m}$. By assumption $\iradical{\ideal'}=\iradical{\ideal_{\Delta^m}}$, and similarly $\iradical{\ideal''}=\iradical{\ideal_{\Delta^n}}$. We note that $\ideal_{\Delta^m}+\ideal_{\Delta^n}=\ideal_{\Delta^{m+n-1}}$ in $\cinfty(\cspace^{\times^{m+n-1}})$, indeed the ideal of each diagonal is generated by $\{a_i-a_j\}$, where the subscripts correspond to different copies of $\cinfty(\cspace)$ and $a\in\cinfty(\cspace)$. Taking coproduct over $\cinfty(\cspace)$, considered as the $m$-th copy, generates $\ideal_{\Delta^{m+n-1}}$ from $\ideal_{\Delta^m}$ and $\ideal_{\Delta^n}$. Therefore, since according to Lemma \ref{Facts}
	\begin{equation*}\iradical{\ideal'+\ideal''}\geq\iradical{\ideal'}+\iradical{\ideal''}=\iradical{\ideal_{\Delta^m}}+\iradical{\ideal_{\Delta^n}}\geq
	\ideal_{\Delta^m}+\ideal_{\Delta^n},\end{equation*}
we have $\ideal'+\ideal''\in\rinfinil{\cspace^{\times^{m+n-1}}}{\cspace}$, and hence $\infinein{\cspace^{\times^m}}{\cspace}\underset{\resh{\cspace}}\times\infinein{\cspace^{\times^n}}{\cspace}\rightarrow\resh{\cspace}^{\times^{m+n-1}}$ factors through $\infinein{\cspace^{\times^{m+n-1}}}{\cspace}\rightarrow\resh{\cspace}^{\times^{m+n-1}}$. We claim that
	\begin{equation}\label{ProductNeighbourhoods}
	\xymatrix{\infinein{\cspace^{\times^m}}{\cspace}\underset{\resh{\cspace}}\times\infinein{\cspace^{\times^n}}{\cspace}
	\ar[r] & \infinein{\cspace^{\times^{m+n-1}}}{\cspace}}\end{equation}
is an isomorphism. From Lemma \ref{ProjectionNeigh} it follows that $\forall\,\ideal\in\rinfinil{\cspace^{\times^{m+n-1}}}{\cspace}$ there are $\ideal'\in\rinfinil{\cspace^{\times^m}}{\cspace}$, $\ideal''\in\rinfinil{\cspace^{\times^n}}{\cspace}$, s.t.\@ $\ideal'+\ideal''\leq\ideal$. As $\ideal'+\ideal''\in\rinfinil{\cspace^{\times^{m+n-1}}}{\cspace}$ for any $\ideal'\in\rinfinil{\cspace^{\times^m}}{\cspace}$ and $\ideal''\in\rinfinil{\cspace^{\times^n}}{\cspace}$, the domain of (\ref{ProductNeighbourhoods}) is a colimit over a right cofinal sub-diagram of the diagram used to compute the codomain of (\ref{ProductNeighbourhoods}) (\cite{Hirschhorn03}, Def.\@ 14.2.1) and hence the colimits agree (\cite{Hirschhorn03} Thm.\@ 14.2.5).\end{proof}

Notice that the topological $\cinfty$-ring corresponding to $\infineired{\cspace}{\cspace}$ has discrete topology ($0\in\rinfinil{\cspace}{\cspace}$), and hence $\infineired{\cspace}{\cspace}\cong\cspace$. Similarly $\forneired{\cspace}{\cspace}\cong\cspace$. We denote 
	\begin{equation*}\infineired{\cspace^\bullet}{\cspace}:=\{\infineired{\cspace^{\times^{k+1}}}{\cspace}\}_{k\in\mathbb Z_{\geq 0}},\quad
	\forneired{\cspace^\bullet}{\cspace}:=\{\forneired{\cspace^{\times^{k+1}}}{\cspace}\}_{k\in\mathbb Z_{\geq 0}}.\end{equation*}
From the previous lemmas together we have the following proposition.

\begin{proposition}\label{Nerves} Let $\cspace\in\cschemes$, the morphisms $\forneired{\cspace\times\cspace}{\cspace}\rightarrow\resh{\cspace}\times\resh{\cspace}$, $\infineired{\cspace\times\cspace}{\cspace}\rightarrow\resh{\cspace}\times\resh{\cspace}$ define sub-groupoids of $(\resh{\cspace},\resh{\cspace}\times\resh{\cspace})$. Their nerves are $\forneired{\cspace^\bullet}{\cspace}$ and $\infineired{\cspace^\bullet}{\cspace}$ respectively.\end{proposition}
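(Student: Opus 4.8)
The plan is to deduce the proposition from the three preceding lemmas together with the fact, established in the example before Lemma~\ref{PairFunctoriality}, that each $\infineired{\cspace^{\times^k}}{\cspace}\hookrightarrow\resh{\cspace}^{\times^k}$ and $\forneired{\cspace^{\times^k}}{\cspace}\hookrightarrow\resh{\cspace}^{\times^k}$ is a monomorphism of pre-sheaves. Since $(\resh{\cspace},\resh{\cspace}\times\resh{\cspace})$ is already a groupoid, it suffices to check that each structure morphism of the pair groupoid restricts to the sub-pre-sheaf $\infineired{\cspace\times\cspace}{\cspace}$ (respectively $\forneired{\cspace\times\cspace}{\cspace}$); the groupoid axioms, being equalities of morphisms whose codomain is a power of $\resh{\cspace}$, are then inherited from the pair groupoid because the inclusions are monomorphisms. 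I will spell out the $\infty$-case; the pro-nilpotent case is the same with $\iradical{-}$, $\rinfinil{}{}$, $\infineired{}{}$ replaced by $\nradical{-}$, $\rfornil{}{}$, $\forneired{}{}$.

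First I would handle the structure morphisms. The source and target are the two projections $\cspace\times\cspace\to\cspace$; each carries the diagonal isomorphically onto $\cspace$, hence is a morphism in $\embeddings{\cschemes}$ from $(\diago{\cspace}\hookrightarrow\cspace\times\cspace)$ to $(\cspace=\cspace)$, and Lemma~\ref{PairFunctoriality} makes it restrict to $\infineired{\cspace\times\cspace}{\cspace}\to\infineired{\cspace}{\cspace}\cong\resh{\cspace}$. The unit is the diagonal $\cspace\to\cspace\times\cspace$, a morphism $(\cspace=\cspace)\to(\diago{\cspace}\hookrightarrow\cspace\times\cspace)$ in $\embeddings{\cschemes}$, so Lemma~\ref{PairFunctoriality} yields a section $\resh{\cspace}\cong\infineired{\cspace}{\cspace}\to\infineired{\cspace\times\cspace}{\cspace}$. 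The inverse is the flip of $\cspace\times\cspace$, which fixes the diagonal, hence by Lemma~\ref{PairFunctoriality} restricts to an endomorphism of $\infineired{\cspace\times\cspace}{\cspace}$, necessarily an automorphism since it squares to the identity. Finally the composition $\cspace^{\times^3}\to\cspace^{\times^2}$, projection to the first and last factors: its domain in the groupoid sense is $\infineired{\cspace\times\cspace}{\cspace}\underset{\resh{\cspace}}\times\infineired{\cspace\times\cspace}{\cspace}$, which by Lemma~\ref{ProductNeigh} with $m=n=2$ is $\infineired{\cspace^{\times^3}}{\cspace}$; since this projection carries the main diagonal onto the main diagonal, Lemma~\ref{ProjectionNeigh} (with $m=3$, $n=2$) identifies the image of $\infineired{\cspace^{\times^3}}{\cspace}\to\resh{\cspace}^{\times^2}$ with $\infineired{\cspace^{\times^2}}{\cspace}=\infineired{\cspace\times\cspace}{\cspace}$. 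Thus all five structure morphisms restrict, and $\infineired{\cspace\times\cspace}{\cspace}\rightrightarrows\resh{\cspace}$ is a sub-groupoid of the pair groupoid.

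Next I would identify the nerve. Its $k$-simplices form the $k$-fold fibre product over $\resh{\cspace}$ of copies of $\infineired{\cspace\times\cspace}{\cspace}$, glued last factor to first factor (target to source). Computing this fibre product associatively and applying Lemma~\ref{ProductNeigh} repeatedly --- first with $m=n=2$ to get $\infineired{\cspace^{\times^3}}{\cspace}$, then with $m=3$, $n=2$, and in general with $m=k$, $n=2$ so that $m+n-1=k+1$ --- identifies this $k$-fold fibre product with $\infineired{\cspace^{\times^{k+1}}}{\cspace}$ (the cases $k=0,1$ being trivial, using $\infineired{\cspace}{\cspace}\cong\resh{\cspace}$), and this identification is an isomorphism over $\resh{\cspace}^{\times^{k+1}}$. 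To see it is one of simplicial objects, note that the face and degeneracy maps of the nerve of the pair groupoid on $\cspace$ are exactly the coordinate-forgetting projections and coordinate-repeating diagonals comprising the simplicial structure of $\cspace^\bullet$, all of which carry main diagonals onto main diagonals; hence, by Lemma~\ref{PairFunctoriality}, they restrict simultaneously to $\infineired{\cspace^\bullet}{\cspace}$ (which is how its simplicial structure is defined) and to the nerve of the sub-groupoid (whose faces and degeneracies are these same restricted projections, diagonals, and the composition treated above). Both are therefore simplicial sub-pre-sheaves of $\cspace^\bullet$ that agree in every degree as subobjects, and so they coincide as simplicial objects; the same argument gives $\forneired{\cspace^\bullet}{\cspace}$.

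The argument is entirely formal once the three lemmas are in hand; the one place demanding care is the bookkeeping of which projections glue the iterated fibre products, both to ensure the hypotheses $m,n\geq 2$ of Lemma~\ref{ProductNeigh} are met at each step and to confirm that the resulting isomorphism intertwines the inner face maps (composition) of the two simplicial objects. That is the main, and essentially the only, obstacle.
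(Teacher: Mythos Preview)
Your proposal is correct and follows essentially the same route as the paper's proof: use Lemma~\ref{ProductNeigh} to identify the fibre products (hence the composition domain and the nerve components), and use functoriality of the neighbourhood construction to see that the structure maps of the pair groupoid restrict. The paper's version is terser---it obtains the unit directly from the fact that $\zerovalue{\diago{\cspace}}\in\rinfinil{\cspace\times\cspace}{\cspace}$, and it bundles the composition step into a single invocation of Lemma~\ref{ProductNeigh}---but your more explicit treatment, including the separate handling of the flip and the inductive bookkeeping for the iterated fibre products via Lemma~\ref{ProductNeigh}, is sound and arguably clearer.
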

\begin{proof} Both $\fornein{\cspace\times\cspace}{\cspace}$ and $\infinein{\cspace\times\cspace}{\cspace}$ are colimits of neighbourhoods of the diagonal $\diago{\resh{\cspace}}$ in $\resh{\cspace}\times\resh{\cspace}$, therefore the diagonal morphism $\resh{\cspace}\rightarrow\resh{\cspace}\times\resh{\cspace}$ factors through each one of them. Composing the embeddings $\fornein{\cspace\times\cspace}{\cspace}\rightarrow\resh{\cspace}\times\resh{\cspace}$, $\infinein{\cspace\times\cspace}{\cspace}\rightarrow\resh{\cspace}\times\resh{\cspace}$ with the projections on $\resh{\cspace}$ we get the source and target morphisms, which are left inverses of the diagonal. From Lemma \ref{ProductNeigh} it follows that the composition morphism on $(\resh{\cspace},\resh{\cspace}\times\resh{\cspace})$ descends to $\fornein{\cspace\times\cspace}{\cspace}$ and $\infinein{\cspace\times\cspace}{\cspace}$. The same lemma gives us also the description of the nerves.\end{proof}

\smallskip

Let $\cspace\in\cschemes$. Recall that we denote $\resh{\cspace}^\bullet:=\{\resh{\cspace}^{\times^{k+1}}\}_{k\in\mathbb Z_{\geq 0}}$, together with all the projection and diagonal maps. This is a simplicial object in $\Pre$ (in fact in $\Shea$), and we can compute the corresponding sheaves of homotopy groups. For any $\cspace'\in\cschemes$ the simplicial set $\resh{\cspace}^\bullet(\cspace')$ is the nerve of the contractible groupoid on the set $\hom_{\cschemes}(\cspace',\cspace)$. Therefore this is a contractible Kan complex. So $\resh{\cspace}^\bullet$ is not much more than $\resh{\spec(\mathbb R)}$. To be precise we need a model structure.

\begin{definition} (\cite{Hirschhorn03}, \cite{Blander03}, \cite{To10}) {\it The global model structure on $\Pre$} is defined by objectwise weak equivalences and objectwise fibrations. {\it The local model structure on $\Pre$} has global cofibrations as cofibrations, and its weak equivalences are those morphisms that induce isomorphisms of {\it sheaves} of homotopy groups.\end{definition}
Thus to see, if a morphism is a local weak equivalence, we need first to compute the pre-sheaves of homotopy groups, and then sheafify. In particular, every global weak equivalence is also a local weak equivalence. For example $\forall\cspace\in\cschemes$ the unique morphism $\resh{\cspace}^\bullet\rightarrow\resh{\rpt}$ is a global weak equivalence. Therefore it is also a local weak equivalence. We proceed similarly in the following proposition.

\begin{proposition} Suppose $\cspace\in\cschemes$ is $\redun$-injective (respectively $\redui$-injective). Consider $\algered{\resh{\cspace}}$ (respectively $\infired{\resh{\cspace}}$) as a constant simplicial object in $\Pre$. Then $\forneired{\cspace^\bullet}{\cspace}\rightarrow\algered{\resh{\cspace}}$ (respectively $\infineired{\cspace^\bullet}{\cspace}\rightarrow\infired{\resh{\cspace}}$) is both a global and a local weak equivalence in $(\Pre)^{\simplop}$.\end{proposition}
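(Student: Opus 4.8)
The plan is to establish the stronger, \emph{global} statement — that $\forneired{\cspace^\bullet}{\cspace}\to\algered{\resh{\cspace}}$ (resp. $\infineired{\cspace^\bullet}{\cspace}\to\infired{\resh{\cspace}}$) is an objectwise weak equivalence of simplicial sets — and then to invoke the observation made just above the proposition, that every global weak equivalence in $(\Pre)^{\simplop}$ is also a local one. I treat the $\redun$-injective case; the $\redui$-injective case is word-for-word identical with $\redun$, $\fornei{-}$, $\forneired{-}{-}$, $\algered{-}$ replaced by $\redui$, $\iradical{-}$, $\infineired{-}{-}$, $\infired{-}$ throughout, since all the auxiliary results (Propositions \ref{Nerves} and \ref{DeRhamAsQuotient}, Lemma \ref{ProductNeigh}) come in both flavours.

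Fix a test object $\cspace'$. Since colimits and finite limits in $\Pre$ are computed objectwise, Proposition \ref{Nerves} (whose proof uses Lemma \ref{ProductNeigh} to supply the Segal-type identifications $\forneired{\cspace^{\times^{k+1}}}{\cspace}\cong\forneired{\cspace\times\cspace}{\cspace}^{\times_{\resh{\cspace}}k}$) shows that $\forneired{\cspace^\bullet}{\cspace}(\cspace')$ is the nerve of the groupoid $G(\cspace')$ whose object set is $\hom_{\cschemes}(\cspace',\cspace)$, whose morphism set is $\forneired{\cspace\times\cspace}{\cspace}(\cspace')$, and whose source, target, identity and composition are inherited from the pair groupoid. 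As the nerve of a groupoid, this simplicial set is a Kan complex with $\pi_n=0$ for $n\geq 2$, with $\pi_0$ the set of isomorphism classes of objects, and with $\pi_1$ based at an object $g$ equal to $\mathrm{Aut}_{G(\cspace')}(g)$. So it is enough to check two things: that $\pi_0(G(\cspace'))$ is identified, compatibly with the given simplicial map, with $\algered{\resh{\cspace}}(\cspace')$; and that every automorphism group of $G(\cspace')$ is trivial. For the first point, the set of isomorphism classes of any groupoid is the coequalizer in $\sets$ of its source and target maps; and because $\cspace$ is $\redun$-injective, Proposition \ref{DeRhamAsQuotient} — in the reformulated guise of Section \ref{Reformulation}, using $\fornei{\diago{\cspace}}\cong\forneired{\cspace\times\cspace}{\cspace}$ — tells us that $\resh{\cspace}\to\algered{\resh{\cspace}}$ is the coequalizer of $\forneired{\cspace\times\cspace}{\cspace}\rightrightarrows\resh{\cspace}$ in $\Pre$. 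Evaluating at $\cspace'$ gives $\pi_0(G(\cspace'))\cong\algered{\resh{\cspace}}(\cspace')$, and the coequalizing arrow is by construction the degree-$0$ component of $\forneired{\cspace^\bullet}{\cspace}\to\algered{\resh{\cspace}}$, hence (by the simplicial identities) matches the given map in every degree. For the second point, an automorphism of $g\in\hom_{\cschemes}(\cspace',\cspace)$ is an element of $\forneired{\cspace\times\cspace}{\cspace}(\cspace')$ whose source and target both equal $g$; but $\forneired{\cspace\times\cspace}{\cspace}$ is a sub-presheaf of $\resh{\cspace}\times\resh{\cspace}$ (the injectivity recorded in Section \ref{DeRhamGroupoids}), so such an element is uniquely determined by the pair $(g,g)\in(\resh{\cspace}\times\resh{\cspace})(\cspace')$, and since the diagonal $\resh{\cspace}\to\resh{\cspace}\times\resh{\cspace}$ factors through $\forneired{\cspace\times\cspace}{\cspace}$ (as used in the proof of Proposition \ref{Nerves}), the identity of $g$ realizes that pair; hence it is the only automorphism. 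Therefore the nerve of $G(\cspace')$ is weakly equivalent to the discrete simplicial set $\pi_0(G(\cspace'))=\algered{\resh{\cspace}}(\cspace')$, the equivalence being precisely the map in question. As this holds for all $\cspace'$, the morphism is a global weak equivalence in $(\Pre)^{\simplop}$, and so, being global, it is local.

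There is no genuine obstacle here — the argument is essentially bookkeeping on top of the results already in hand. The one point that needs care, rather than difficulty, is the compatibility step in the middle: one must confirm that the map $\forneired{\cspace^\bullet}{\cspace}\to\algered{\resh{\cspace}}$ we are handed really is the augmentation of the nerve of $G(\cspace')$ onto its set of components, which is exactly the content of the coequalizer statement of Proposition \ref{DeRhamAsQuotient} (and its Section \ref{Reformulation} reformulation); the triviality of automorphism groups rests on the sub-presheaf property, which in turn ultimately rests on the strong functoriality of Proposition \ref{StrongFunctoriality} exploited earlier.
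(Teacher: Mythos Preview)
Your proof is correct and follows essentially the same strategy as the paper's: evaluate at a test object, recognize the simplicial set as the nerve of a groupoid via Proposition~\ref{Nerves}, compute $\pi_0$ through the coequalizer description of Proposition~\ref{DeRhamAsQuotient}, and verify that automorphism groups vanish. The only cosmetic difference is in the $\pi_1$ step: you invoke the sub-presheaf property of $\forneired{\cspace\times\cspace}{\cspace}\hookrightarrow\resh{\cspace}\times\resh{\cspace}$ established in Section~\ref{DeRhamGroupoids} to conclude that a loop at $g$ is determined by the pair $(g,g)$ and hence equals the identity, whereas the paper unwinds this directly by choosing a representative $\phi\colon\cinfty(\cspace\times\cspace)/\ideal\to\cinfty(\cspace')$ and showing it factors through the diagonal quotient --- but that factorization is precisely the content of the sub-presheaf property, so the two arguments are the same at different levels of abstraction.
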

\begin{proof} From Proposition \ref{Nerves} we know that $\infineired{\cspace^\bullet}{\cspace}$ is the nerve of $(\cspace, \infineired{\cspace^{\times^2}}{\cspace})$, therefore $\forall\cspace'\in\cschemes$ the simplicial set $\infinein{\cspace^\bullet}{\cspace}(\cspace')$ is a Kan complex, and it has vanishing homotopy groups in dimension $\geq 2$. Since $\resh{\cspace}^\bullet$ is a contractible groupoid, it is easy to show that also $\pi_1(\infineired{\cspace^\bullet}{\cspace}(\cspace'))$ is trivial. Indeed, let $\cspace'\in\cschemes$, and let $\Phi\colon\cspace'\rightarrow\infinein{\cspace^{\times^2}}{\cspace}$, s.t.\@ the two compositions $\projection_1\circ\Phi,\projection_2\circ\Phi\colon\cspace'\rightarrow\cspace$ are equal. From the definition of $\infinein{\cspace^{\times^2}}{\cspace}$ it follows $\exists\ideal\in\rinfinil{\cspace^{\times^2}}{\cspace}$, s.t.\@ $\Phi$ can be represented by $\phi\colon\cinfty(\cspace\times\cspace)/\ideal\rightarrow\cinfty(\cspace')$, and $\projection_1\circ\Phi=\projection_2\circ\Phi$ means that $\phi\circ\injection_1=\phi\circ\injection_2$, where $\injection_1,\injection_2\colon\cinfty(\cspace)\rightarrow\cinfty(\cspace\times\cspace)/\ideal$ are the two inclusions, followed by dividing by $\ideal$. Thus the composite morphism $\cinfty(\cspace\times\cspace)\longrightarrow\cinfty(\cspace\times\cspace)/\ideal\overset{\phi}{\longrightarrow}\cinfty(\cspace')$ factors through the diagonal $\cinfty(\cspace\times\cspace)\rightarrow\cinfty(\cspace)$. Since the ideal of the diagonal contains $\ideal$, this factorization defines a factorization of $\phi$, in other words $\Phi$ is a degenerate simplex. Finally Proposition \ref{DeRhamAsQuotient} tells us that $\pi_0(\infinein{\cspace^\bullet}{\cspace}(\cspace'))=\infired{\resh{\cspace}}(\cspace')$. The nilpotent case is similar.\end{proof}%

\section{Some differential operators and further questions}\label{DifferentialOperators}

Let $\cspace\in\cschemes$, and suppose it is $\redun$- and $\redui$-injective. For example any manifold would do. We can construct 3 groupoids:\begin{itemize}
\item[1.] the pair groupoid $(\cspace,\cspace\times\cspace)$,
\item[2.] the formal neighbourhood of the diagonal $(\resh{\cspace},\forneired{\cspace\times\cspace}{\cspace})$,
\item[3.] the germ of the diagonal $(\resh{\cspace},\infineired{\cspace\times\cspace}{\cspace})$.\end{itemize} 
The case of the pair groupoid is very familiar. Here any $\mathbb R$-point of $\cspace$ can be mapped to any other. Sections of $\projection_1\colon\cspace\times\cspace\rightarrow\cspace$ over all of $\cspace$ correspond to smooth morphisms $\cspace\rightarrow\cspace$. It is clear that the full pair groupoid cannot give a meaningful infinitesimal theory.

\smallskip

The formal neighbourhood of the diagonal is also well known. There is only one section of $\projection_1\colon\forneired{\cspace\times\cspace}{\cspace}\rightarrow\cspace$. It is the diagonal. Thus here we do get an infinitesimal theory, which is the usual theory of polynomial differential operators of course. Indeed, for $\cspace':=\cspace\times\spec(\cinfty(\mathbb R)/(\zerovalue{0})^n)$ diagrams 
	\begin{equation*}\xymatrix{\cspace'\ar[rr]\ar[rd]_{\projection_1} && \fornein{\cspace\times\cspace}{\cspace}\ar[ld]^{\projection_1}\\ 
	& \cspace &}\end{equation*}
are in one-to-one correspondence with differential operators of order $<n$.

\smallskip

The germ of the diagonal is new. Also here the diagonal is the only section of $\projection_1\colon\infineired{\cspace\times\cspace}{\cspace}\rightarrow\cspace$, but we get considerably more parameterized sections than in the formal case. Let $\Phi\colon\mathbb R^k\times\mathbb R\rightarrow\mathbb R^k$ be a $1$-parameter family of smooth maps $\mathbb R^k\rightarrow\mathbb R^k$, s.t.\@ $\Phi|_{\mathbb R^k\times\{0\}}$ is $\id_{\mathbb R^k}$. It is not difficult to construct an example such that restriction of $\Phi$ to the formal neighbourhood of $\mathbb R^k\times\{0\}$ in $\mathbb R^k\times\mathbb R$ factors through the projection on $\mathbb R^k$, yet for no point $\rpt\in\mathbb R\setminus\{0\}$ is $\Phi|_{\mathbb R^k\times\{\rpt\}}$ the identity on $\mathbb R^k$.

Such $\Phi$ gives a section of $\forneired{\mathbb R^{2k}}{\mathbb R^k}$ that factors through the diagonal, yet the corresponding section of $\infineired{\mathbb R^{2k}}{\mathbb R^k}$ does not. In other words $\Phi$ is inaccessible by perturbation, yet it is visible through the $\infty$- de Rham groupoid.

\smallskip

As we can see $\infty$- de Rham groupoids give infinitesimal theory beyond perturbation, and we would like to develop the corresponding theory of \D-modules. Such development cannot fit within this paper. 

Of course, the obvious way is to define \D-modules as quasi-coherent sheaves on the de Rham groupoid. The problem is that the notion of a quasi-coherent sheaf of modules is not at all straightforward in $\cinfty$-geometry. The straightforward definition works well only for coherent sheaves. This is enough to define polynomial differential operators, since all we require are vector bundles of finite rank on the diagonal. The germ of the diagonal forces us to go beyond coherent sheaves. This is the subject of \cite{QuasiCoherent}.

\appendix

\section{Proof or Lemma \ref{RelativeOrders}}\label{ProofRelativeOrders}

We need to find $h\in\cinfty(\mathbb R^m)$, s.t.\@ $\vset{h}{\rpt}=V$ and $\psi(h)$ is divisible by $f$ within $\cinfty(\mathbb R^{m+n})$. Were $\mathbb R^{m+n}$ compact, we could do this by estimating the rate of vanishing of $f$ at $\Psi^{-1}(V)$. As $\mathbb R^{m+n}$ is not compact, we cut it into overlapping compact pieces using closed balls: $\forall s\in\mathbb R$ let $B_s\subseteq\mathbb R^m$, $B'_s\subseteq\mathbb R^n$ be closed balls of radii $s$ centered at the origins, if $s<0$ $B_{s}=B'_{s}:=\emptyset$.

To avoid dealing with signs we estimate from below the rate of vanishing of $f^2$. Let $\chi\in\cinfty(\mathbb R^m)$ be non-negative and s.t.\@ $\vset{\chi}{\rpt}=V$, we define (possibly discontinuous) functions $\{\lambda_{s,t}\colon\mathbb R_{\geq 0}\rightarrow\mathbb R_{\geq 0}\}_{s,t\in\mathbb Z_{\geq 0}}$ as follows: $\lambda_{s,t}(x):=\min\{(f(\rqt))^2\,|\,\rqt\in(\chi\circ\Psi)^{-1}(x)\cap(\closu{B_{s+1}\setminus B_{s-1}}\times\closu{B'_{t+1}\setminus B'_{t-1}})\}$ or $1$ if $(\chi\circ\Psi)^{-1}(x)\cap(\closu{B_{s+1}\setminus B_{s-1}}\times\closu{B'_{t+1}\setminus B'_{t-1}})=\emptyset$. These functions give minima of $f^2$ on compact pieces of fibers of $\chi$.

To estimate the rate of vanishing of $f^2$ at $\Psi^{-1}(V)$ we need to look at lower bounds of $\lambda_{s,t}$'s over intervals. We define a sequence of functions $\mathbb R_{\geq 0}\rightarrow\mathbb R_{\geq 0}$ as follows: $\forall l\in\mathbb N$ $\alpha_{s,t}(\frac{1}{l}):=\min\{\lambda_{s,t}(x)\,|\,x\in[\frac{1}{l+1},\infty)\}$,\footnote{Each $\alpha_{s,t}(\frac{1}{l})$ is well defined and positive, because $((\chi\circ\Psi)^{-1}[\frac{1}{l+1},\infty))\cap(\closu{B_{s+1}\setminus B_{s-1}}\times\closu{B'_{t+1}\setminus B'_{t-1}})$ is a compact subset of $\mathbb R^{m+n}\setminus\Psi^{-1}(V)$.} $\alpha_{s,t}$ is linear on each $[\frac{1}{l+1},\frac{1}{l}]$, $\alpha_{s,t}|_{[1,\infty)}=\alpha_{s,t}(1)$ and $\alpha_{s,t}(0):=0$. Each $\alpha_{s,t}$ is continuous, weakly monotonically increasing and $\vset{\alpha_{s,t}}{x}=\{0\}$. 

These functions, composed with $\chi$, estimate from below the rate of vanishing of $f^2$ at compact pieces of $\Psi^{-1}(V)$: let $W:=(\chi\circ\Psi)^{-1}([0,1))$, it is an open neighbourhood of $\Psi^{-1}(V)$, s.t.\@ $\forall s,t\in\mathbb Z_{\geq 0}$ $(f(\rqt))^2\geq\alpha_{s,t}(\chi(\Psi(\rqt)))$ for all $\rqt\in W\cap(\closu{B_{s+1}\setminus B_{s-1}}\times\closu{B'_{t+1}\setminus B'_{t-1}})$. Indeed, let $\rqt\in W$, by construction $\chi(\Psi(\rqt))\in [\frac{1}{l+1},\frac{1}{l}]$ for some $l\geq 1$, and if $\rqt\in\closu{B_{s+1}\setminus B_{s-1}}\times\closu{B'_{t+1}\setminus B'_{t-1}}$, then $\alpha_{s,t}(\chi(\Psi(\rqt)))\leq\alpha_{s,t}(\frac{1}{l})=\min(\lambda_{s,t}(x)\,|\,x\in[\frac{1}{l+1},\infty))\leq(f(\rqt))^2$, since $\rqt\in(\chi\circ\Psi)^{-1}([\frac{1}{l+1},\infty))\cap(\closu{B_{s+1}\setminus B_{s-1}}\times\closu{B'_{t+1}\setminus B'_{t-1}})$.

\smallskip

We can glue $\alpha_{s,t}$'s into a sequence of functions on $\mathbb R^m$: let $\{\epsilon_s\}_{s\in\mathbb Z_{\geq 0}}$ be a smooth partition of unity on $\mathbb R^m$, s.t.\@ $\{\rpt\,|\,\epsilon_s(\rpt)>0\}\subseteq B_{s+\frac{2}{3}}\setminus B_{s-\frac{2}{3}}$. For each $t\geq 0$ $\underset{s\geq 0}\sum\epsilon_s(\rpt)\alpha_{s,t}(\chi(\rpt))$ is a continuous non-negative function on $\mathbb R^m$ that vanishes exactly on $V$. Moreover, for each $\rqt\in W\cap(\mathbb R^m\times\closu{B'_{t+1}\setminus B'_{t-1}})$ we have $(f(\rqt))^2\geq\underset{s\geq 0}\sum\epsilon_s(\Psi(\rqt))\alpha_{s,t}(\chi(\Psi(\rqt)))$.

For each $s\in\mathbb Z_{\geq 0}$ instead of a sequence $\{\alpha_{s,t}\}_{t\in\mathbb Z_{\geq 0}}$ of continuous functions we would like to find one smooth function, that vanishes at $0$ faster than any power of each one of $\alpha_{s,t}$'s. A very old theorem by du Bois-Reymond (\cite{Ha1910}, page 10) lets us do it. Here is a variation of this result.

\begin{proposition}\label{InfinitesimalBoisReymond} Let $\{\alpha_t\}_{t\in\mathbb Z_{\geq 0}}$ be a sequence of continuous weakly monotonically increasing functions $\mathbb R_{\geq 0}\rightarrow\mathbb R_{\geq 0}$, s.t.\@ $\forall t$ $\vset{\alpha_t}{x}=\{0\}$. There is non-negative $\alpha\in\cinfty(\mathbb R)$, s.t.\@ $\vset{\alpha}{x}=\mathbb R_{\leq 0}$ and $\forall t,k\in\mathbb Z_{\geq 0}$ $\underset{x\rightarrow 0}\lim\,\frac{\alpha(x)}{(\alpha_t(x))^k}=0$.
\end{proposition}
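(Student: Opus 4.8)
The plan is to collapse the doubly-indexed family of requirements $\{\alpha(x)/(\alpha_t(x))^k\to 0\}_{t,k}$ into a single weakly decreasing sequence of auxiliary functions, and then to construct $\alpha$ by a diagonal procedure in the spirit of du Bois-Reymond over a sequence of shells shrinking to $0$, using a smooth partition of unity to secure smoothness at the origin.

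\emph{Step 1 (reduction).} For $N\in\mathbb Z_{\geq 1}$ I would put $\beta_N:=\min\{(\alpha_t)^k\ :\ 0\leq t\leq N,\ 1\leq k\leq N\}$. Since each $\alpha_t$ is non-negative, continuous and weakly increasing, so is each power $(\alpha_t)^k$, hence so is the finite minimum $\beta_N$; moreover $\vset{\beta_N}{x}=\{0\}$ because every $\alpha_t$ vanishes only at $0$, and obviously $\beta_{N+1}\leq\beta_N$. Fix a strictly decreasing sequence $\eta_1>\eta_2>\cdots\to 0$ of positive reals and let $I_N:=(\eta_{N+1},\eta_N]$, so that $\bigcup_{N\geq 1}I_N=(0,\eta_1]$. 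I claim it suffices to build a non-negative $\alpha\in\cinfty(\mathbb R)$ with $\vset{\alpha}{x}=\mathbb R_{\leq 0}$ such that $\alpha(x)\leq\frac1N\beta_N(x)$ for all $N$ and all $x\in I_N$ (the values on $[\eta_1,\infty)$ being irrelevant apart from positivity). Indeed, fix $t$ and $k$ (if $k=0$ the ratio equals $\alpha(x)$, which tends to $0$); as $x\to 0^{+}$ we have $x\in I_M$ with $M\to\infty$, and once $M\geq\max(t,k)$ the function $(\alpha_t)^k$ appears in the minimum defining $\beta_M$, so $(\alpha_t(x))^k\geq\beta_M(x)\geq M\,\alpha(x)$ and hence $0\leq\alpha(x)/(\alpha_t(x))^k\leq 1/M\to 0$.

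\emph{Step 2 (construction of $\alpha$).} I would take a smooth partition of unity $\{\chi_N\}_{N\geq 1}$ on $(0,\infty)$ with $\suppop{\chi_N}\subseteq(\eta_{N+1},\eta_{N-1})$ for $N\geq 2$ and $\suppop{\chi_1}\subseteq(\eta_2,\infty)$; then, by the positions of the supports, on each shell $I_N$ only $\chi_N$ and $\chi_{N+1}$ can be nonzero. Set $b_N:=\frac1N\min_{[\eta_{N+1},\eta_N]}\beta_N>0$ (positive because $\beta_N$ is continuous and positive on the compact interval $[\eta_{N+1},\eta_N]\subset(0,\infty)$). Now choose positive reals $d_N$ so small that $d_N+d_{N+1}\leq b_N$ for every $N$ and $d_N\leq 2^{-N}\bigl(1+\sum_{j=0}^{N}\|\chi_N^{(j)}\|_\infty\bigr)^{-1}$ (possible, every $\|\chi_N^{(j)}\|_\infty$ being finite), and define $\alpha(x):=\sum_{N\geq 1}d_N\chi_N(x)$ for $x>0$ and $\alpha(x):=0$ for $x\leq 0$. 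On $(0,\infty)$ the sum is locally finite, so $\alpha$ is smooth there, and it is strictly positive there since at each point some $\chi_N$ is positive. For $x\in I_N$, using $0\leq\chi_N,\chi_{N+1}\leq 1$, one gets $\alpha(x)=d_N\chi_N(x)+d_{N+1}\chi_{N+1}(x)\leq d_N+d_{N+1}\leq b_N\leq\frac1N\beta_N(x)$, which is exactly the estimate needed in Step 1. Finally, for $x\in I_N$ and $j\leq N$ only $\chi_N,\chi_{N+1}$ contribute, so $|\alpha^{(j)}(x)|\leq d_N\|\chi_N^{(j)}\|_\infty+d_{N+1}\|\chi_{N+1}^{(j)}\|_\infty\leq 2^{-N}+2^{-(N+1)}$; letting $x\to 0^{+}$ (hence $N\to\infty$) shows that $\alpha$ and all its derivatives tend to $0$ at the origin, so the extension by $0$ is $\cinfty$ on $\mathbb R$ and $\vset{\alpha}{x}=\mathbb R_{\leq 0}$.

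\emph{Main obstacle.} The two delicate points are: (i) the diagonalization in Step 1 --- compressing the two-parameter family $(\alpha_t)^k$ into the single weakly decreasing sequence $\beta_N$ and running the estimate shell by shell with $M=M(x)\to\infty$, so that a bound of size $1/M$ on each shell produces the limit $0$; and (ii) in Step 2, the joint choice of the $d_N$, which must be small enough to force $\alpha(x)\leq b_N$ on $I_N$ while simultaneously dominating all derivatives of $\chi_N$ of order $\leq N$ --- this last condition is precisely what makes $\alpha$ smooth at $0$, and is the only place where the sequence $\eta_N\downarrow 0$ really enters. The rest is routine: for instance the passage from ``all derivatives tend to $0$ at $0^{+}$'' to ``$\cinfty$ on $\mathbb R$'' is standard.
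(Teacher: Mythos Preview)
Your argument is correct and follows a genuinely different route from the paper's proof. The paper first reduces to a \emph{strictly decreasing} sequence by replacing $\alpha_t$ with the products $\alpha_0\cdots\alpha_t$, then builds a single continuous weakly increasing $\gamma$ by splicing a piecewise-linear diagonal function $\beta$ (with $\beta(1/t)=\alpha_t(1/t)$) together with the $\alpha_t$ themselves on successive intervals; it then smooths $\gamma$ by convolution to obtain $\phi$ and finally sets $\alpha:=e^{-1/\phi^2}$, the last step being what absorbs \emph{all powers} $k$ at once. You instead absorb the powers at the outset by defining $\beta_N=\min_{t,k\leq N}(\alpha_t)^k$, and you manufacture smoothness directly by a partition-of-unity construction with constants $d_N$ chosen small enough to control both the size on shells and the first $N$ derivatives of $\chi_N$. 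Your approach avoids the convolution lemma and the $e^{-1/\phi^2}$ trick entirely, at the cost of more explicit bookkeeping (the recursive choice of the $d_N$ and the derivative estimate); the paper's approach is cleaner once one has the smoothing lemma in hand, since the exponential handles the smoothness and the powers simultaneously. Two minor remarks: the recursive feasibility of the constraints $d_N+d_{N+1}\leq b_N$ together with the derivative bound deserves one explicit sentence (e.g.\ take $d_N\leq\min(c_N,b_{N-1}/2,b_N/2)$), and you should note that for $N=1$ one chooses $\chi_1$ so that its derivatives are supported in the compact set $[\eta_2,\eta_1]$ (or simply observe that the derivative bound on $d_1$ is never used near $0$).
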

\begin{proof} It is clear that, given another sequence $\{\alpha'_t\}$, s.t.\@ $\forall t$ $\alpha'_t\leq\alpha_t$ on $[0,1]$, if we prove the statement for $\{\alpha'_t\}$, it would follow for $\{\alpha_t\}$ as well. Thus, dividing each $\alpha_t$ by $\max(2,\underset{x\in[0,1]}\max\alpha(x))$, we can assume that $\forall t\geq 0\,\forall x\in[0,1]$ $\alpha_t(x)< 1$. In turn, putting $\alpha'_t:=\alpha_0\cdot\ldots\cdot\alpha_t$, we can assume that $\forall t\geq 0\,\forall x\in(0,1]$ $\alpha_{t+1}(x)<\alpha_t(x)$ and $\underset{x\rightarrow 0}\lim\frac{\alpha_{t+1}(x)}{\alpha_t(x)}=0$. 

Now we define a piece-wise linear $\beta\colon\mathbb R_{\geq 0}\rightarrow\mathbb R_{\geq 0}$ as follows: $\forall t\in\mathbb N$ $\beta(\frac{1}{t}):=\alpha_t(\frac{1}{t})$, on $[\frac{1}{t+1},\frac{1}{t}]$ $\beta$ is linear, $\beta(0):=0$ and $\beta|_{[1,\infty)}=\beta(1)$. It is clear that $\beta$ is continuous, and since $\alpha_t$'s are assumed to be increasing and $\forall t$ $\alpha_{t+1}<\alpha_t$ on $[0,1]$, $\beta$ is weakly monotonically increasing as well.

By construction $\forall t\in\mathbb N$ the subset of $[\frac{1}{t+1},\frac{1}{t}]$ where $\beta$ and $\alpha_t$ agree is not empty, let $x_t$ be the smallest element of this subset, since $\alpha_{t+1}(\frac{1}{t+1})<\alpha_t(\frac{1}{t+1})$ it is clear that $x_t>\frac{1}{t+1}$. We define $\gamma\colon\mathbb R_{\geq 0}\rightarrow\mathbb R_{\geq 0}$ as follows: $\forall t$ $\gamma|_{[\frac{1}{t+1},x_t]}:=\beta$, $\gamma|_{[x_t,\frac{1}{t}]}:=\alpha_t$, $\gamma|_{[1,\infty)}=\gamma(1)$, $\gamma(0):=0$. It is easy to see that $\gamma$ is continuous weakly monotonically increasing and $\vset{\gamma}{x}=\{0\}$. Moreover, $\forall t\in\mathbb N,\,\forall x\in(0,\frac{1}{t+1}]$ we have $\gamma(x)<\alpha_t(x)$. This is true because $\forall t'> t$ $\gamma|_{[\frac{1}{t'+1},\frac{1}{t'}]}\leq\alpha_{t'}|_{[\frac{1}{t'+1},\frac{1}{t'}]}<\alpha_{t}|_{[\frac{1}{t+1},\frac{1}{t}]}$.

Since $\gamma$ is continuous and weakly monotonically increasing, we can find $\phi\in\cinfty(\mathbb R)$, s.t.\@ $\vset{\phi}{x}=\mathbb R_{\leq 0}$ and $\phi|_{\mathbb R_{\geq 0}}\leq\gamma$, e.g.\@ we can extend $\gamma$ by $0$ to all of $\mathbb R$ and apply the following trivial lemma to $\gamma(-x)$.

\begin{lemma}\label{SmoothAppro} Let $\gamma\colon\mathbb R\rightarrow\mathbb R$ be continuous and weakly monotonically decreasing, and s.t.\@ $\{x\in\mathbb R\,|\,\gamma(x)\neq 0\}=\{x<0\}$. Let $\epsilon\in\cinfty(\mathbb R)$, s.t.\@ $0\leq \epsilon(x)\leq 1$ and $\suppop{\epsilon}=[-1,0]$. Let $\convo{\epsilon}{\gamma}$ be the convolution. It is smooth, weakly monotonically decreasing, $0\leq (\convo{\epsilon}{\gamma})(x)\leq \gamma(x)$ for all $x$, and $\{x\in\mathbb R\,|\,(\convo{\epsilon}{\gamma})(x)\neq 0\}=\{x<0\}$.\end{lemma}
\begin{proof} By definition $(\convo{\epsilon}{\gamma})(x)=\int_\mathbb R\epsilon(x-y)\gamma(y)d y$. For any $y\geq 0$ we have $\gamma(y)=0$, and, if $x\geq 0$, for any $y<0$ we have $\epsilon(x-y)=0$. Therefore for any $x\geq 0$ we have $\epsilon(x-y)\gamma(y)=0$, and hence $(\convo{\epsilon}{\gamma})(x)=0$ for $x\geq 0$.

If $x<0$, the function $\epsilon(x-y)\gamma(y)$ is not constantly $0$, as a function of $y$. Indeed, if $y<0$ and simultaneously $x<y<x+1$, then $\gamma(y)>0$ and simultaneously $\epsilon(x-y)>0$. Therefore, since both $\gamma$ and $\epsilon$ are non-negative functions, $(\convo{\epsilon}{\gamma})(x)> 0$ for all $x<0$. 

For any $x<0$ we have $\epsilon(x-y)\gamma(y)\leq\gamma(x)$, $\forall y\in\mathbb R$. Indeed, $\epsilon(x-y)\leq 1$ and hence $\epsilon(x-y)\gamma(y)\leq\gamma(y)$, and since $\gamma$ is monotonically decreasing, we obtain the inequality for $y\geq x$. For $y<x$ we have $x-y>0$, and hence $\epsilon(x-y)=0$. Thus the inequality holds for all $y$. Since $\suppop{\epsilon}=[-1,0]$, support of $\epsilon(x-y)f(y)$ as a function of $y$ is contained in an interval of length $1$. Altogether we conclude that $(\convo{\epsilon}{\gamma})(x)\leq\gamma(x)\cdot 1=\gamma(x)$.\end{proof}%

Finally we define $\alpha:=e^{-\frac{1}{\phi^2}}$. Since $\forall k\in\mathbb N$ $\underset{x\rightarrow 0}\lim\frac{\alpha(x)}{(\phi(x))^k}=0$, $\phi\leq\gamma$, and on $[0,\frac{1}{t+1}]$ $\gamma<\alpha_t$, it is clear that $\forall t,k\in\mathbb Z_{\geq 0}$ $\underset{x\rightarrow 0}\lim\frac{\alpha(x)}{(\alpha_t(x))^k}=0$.\end{proof}%

Applying Proposition \ref{InfinitesimalBoisReymond} to $\{\alpha_{s,t}\}_{t\in\mathbb Z_{\geq 0}}$ for each $s\in\mathbb Z_{\geq 0}$, we obtain a sequence $\{\alpha_s\}_{s\in\mathbb Z_{\geq 0}}$ in $\cinfty(\mathbb R)$. Let $h:=\underset{s\geq 0}\sum\epsilon_s\cdot(\alpha_s\circ\chi)$. We claim that $f^2$ divides $h\circ\Psi$ in $\cinfty(\mathbb R^{m+n})$. Indeed, the ratio $\frac{h\circ\Psi}{f^2}$ is a smooth function on $\mathbb R^{m+n}\setminus\Psi^{-1}(V)$, and we extend it by $0$ to all of $\mathbb R^{m+n}$. As the following obvious lemma shows, to prove that $\frac{h\circ\Psi}{f^2}$ is smooth everywhere, it is enough to show that $\frac{h\circ\Psi}{f^{2k}}$ is continuous for all $k\in\mathbb N$, when extended by $0$ to $\Psi^{-1}(V)$.

\begin{lemma}\label{SmoothnessThroughContinuity} Let $\mu,\nu\in\cinfty(\mathbb R^l)$, $l\geq 0$, $k\in\mathbb Z_{\geq 1}$, and let $g_k(\rpt):=\frac{\mu(\rpt)}{\nu(\rpt)^k}$, if $\nu(\rpt)\neq 0$ and $0$ otherwise. If $\forall k\in\mathbb Z_{\geq 1}$ $g_k$ is continuous on all of $\mathbb R^l$, then $\forall k\in\mathbb Z_{\geq 1}$ $g_k\in\cinfty(\mathbb R^l)$, having $0$-jet at every point of $\{\nu=0\}\subseteq\mathbb R^l$.\end{lemma}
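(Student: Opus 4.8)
Write $Z:=\{\nu=0\}$ and $U:=\mathbb R^l\setminus Z$; if $Z=\emptyset$ there is nothing to prove, so assume $Z\neq\emptyset$. On $U$ the function $g_k=\mu\nu^{-k}$ is a quotient of smooth functions with non-vanishing denominator, hence smooth, and a one-step induction on $|\alpha|$ via the quotient rule shows that on $U$
\[
\partial^\alpha g_k=\sum_s c_s\,(\partial^{\delta_s}\mu)\,\frac{\prod_{i=1}^{r_s}\partial^{\gamma_{s,i}}\nu}{\nu^{\,k+r_s}},
\]
a finite sum with $|\gamma_{s,i}|\geq 1$ and $|\delta_s|+\sum_i|\gamma_{s,i}|=|\alpha|$ (the exponent $k+r_s$ is forced because each differentiation either leaves $r_s$ fixed or increases both $r_s$ and the power by one). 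Thus the proof splits into: (A) for every multi-index $\delta$ and every $j\in\mathbb Z_{\geq 1}$, the function equal to $(\partial^\delta\mu)/\nu^j$ on $U$ and to $0$ on $Z$ is continuous on $\mathbb R^l$; and (C) any function that is continuous on $\mathbb R^l$, smooth on $U$, and all of whose $U$-derivatives extend continuously by $0$ across $Z$, is $\cinfty(\mathbb R^l)$ with $0$-jet on $Z$. Given (A) and the displayed formula, each $\partial^\alpha g_k$ extends to a continuous $G_\alpha$ on $\mathbb R^l$ with $G_\alpha|_Z=0$ (note $G_0=g_k$ by the definition in the lemma), and then (C) applies.

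\textbf{Step (A): flatness of the derivatives of $\mu$.} Fix $p_0\in Z$; we may take $p_0=0$. Since $\nu$ vanishes on $Z$ and is Lipschitz with some constant $L$ on a fixed ball $\bar B(0,2)$, every $p\in\bar B(0,1)$ satisfies $|\nu(p)|\leq L\,\mathrm{dist}(p,Z)$; hence, writing $\epsilon:=|\nu(p)|$ for $p\in U\cap B(0,1)$, the ball $B(p,\epsilon/2L)$ is contained in $U$ and $\tfrac{\epsilon}{2}\leq|\nu|\leq\tfrac{3\epsilon}{2}$ there. Therefore, for any $j'$, on that ball $|\mu|=|\nu|^{j'}|g_{j'}|\leq(\tfrac{3\epsilon}{2})^{j'}\sup_{B(p,\epsilon/2L)}|g_{j'}|$, and the last supremum tends to $0$ as $p\to 0$ because $g_{j'}$ is continuous with $g_{j'}(0)=0$. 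Now use a fixed finite-difference operator $\Delta_\rho$ for $\partial^\delta$ (a linear combination of evaluations at $p+\rho v$, $v$ in a finite subset of $\mathbb Z^l$) with $\rho^{-|\delta|}\Delta_\rho\mu(p)=\partial^\delta\mu(p)+O\!\big(\rho\max_{|\beta|=|\delta|+1}\sup_{B(p,c\rho)}|\partial^\beta\mu|\big)$, the implied constant absolute; taking $\rho=\epsilon^{\,j+1}$ keeps all sample points inside $B(p,\epsilon/2L)\subseteq U$ for $\epsilon$ small, so
\[
|\partial^\delta\mu(p)|\leq C\rho^{-|\delta|}\sup_{B(p,\epsilon/2L)}|\mu|+C\rho\max_{|\beta|=|\delta|+1}\sup_{\bar B(0,1)}|\partial^\beta\mu|.
\]
Dividing by $|\nu(p)|^j=\epsilon^j$ and choosing $j':=(j+1)|\delta|+j$ makes both terms tend to $0$ as $p\to 0$; this is precisely continuity of $(\partial^\delta\mu)/\nu^j$ at $p_0$, proving (A).

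\textbf{Step (C): continuity of all derivatives forces smoothness.} I induct on $|\alpha|$, showing $g_k\in C^{|\alpha|}$ with $\partial^\alpha g_k=G_\alpha$; the only point is that, for $|\alpha|=N$ and $q\in Z$, $\partial_j G_\alpha(q)=G_{\alpha+e_j}(q)$. Restrict to the coordinate line through $q$: the continuous function $\Psi(t):=G_\alpha(q+te_j)$ vanishes on the closed set $K:=\{t:q+te_j\in Z\}$ and off $K$ is differentiable with $\Psi'(t)=G_{\alpha+e_j}(q+te_j)$, which is bounded on bounded intervals. Splicing the one-sided Lipschitz bounds on the components of the complement of $K$ through the values $\Psi=0$ on $K$ shows $\Psi$ is Lipschitz on bounded intervals, hence absolutely continuous, with $\Psi'(t)=G_{\alpha+e_j}(q+te_j)$ for a.e.\ $t$ (on $K$ both sides vanish at every density point of $K$). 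Consequently $G_\alpha(q+he_j)-G_\alpha(q)=\int_0^h G_{\alpha+e_j}(q+se_j)\,ds$, and dividing by $h$ and using continuity of $G_{\alpha+e_j}$ gives $\partial_j G_\alpha(q)=G_{\alpha+e_j}(q)$. Running the induction, $g_k=G_0\in\cinfty(\mathbb R^l)$ with $\partial^\alpha g_k=G_\alpha$; since $G_\alpha|_Z=0$ for every $\alpha$, $g_k$ has $0$-jet at each point of $\{\nu=0\}$.

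\textbf{Expected obstacle.} The substantive step is (A): converting the hypothesis (``the \emph{values} $\mu/\nu^m$ stay bounded, indeed tend to $0$, for all $m$'') into flatness of \emph{all derivatives} of $\mu$ along $Z$. This is a Hadamard-type interpolation, and the delicate part is to run the finite-difference estimate only inside a ball of radius comparable to $|\nu(p)|$, where $\nu$ — and hence the bound on $|\mu|$ supplied by $g_{j'}$ — is under control; the algebraic formula in the reduction and the absolute-continuity argument in (C) are routine once (A) is in hand.
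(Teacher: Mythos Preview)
Your proof is correct, and it is organised quite differently from the paper's. The paper's argument is a two-line bootstrap: assuming $g_k\in\mathcal C^t(\mathbb R^l)$ for \emph{every} $k$, the identity $g_k=\nu\,g_{k+1}$ is asserted to give $g_k\in\mathcal C^{t+1}$, and induction on $t$ finishes. No further detail is supplied; in particular, the passage from $C^t$ to $C^{t+1}$ is left to the reader, and if one tries to fill it in honestly (Leibniz on $g_k=\nu g_{k+1}$ leaves the term $\nu\,\partial^\alpha g_{k+1}$ with $|\alpha|=t$, and showing \emph{continuity} of its derivative at points of $Z$ ultimately needs exactly the flatness of $\partial^\delta\mu$ relative to $\nu$ that you isolate as step~(A)).

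Your route makes this explicit. Rather than bootstrapping the regularity class via $g_k=\nu g_{k+1}$, you first differentiate $g_k=\mu\nu^{-k}$ on $U$ to reduce every $\partial^\alpha g_k$ to a finite combination of $(\partial^\delta\mu)/\nu^j$ times smooth factors, then prove (A) --- that each $(\partial^\delta\mu)/\nu^j$ extends continuously by $0$ across $Z$ --- via a Landau--Kolmogorov/finite-difference interpolation on a ball of radius $\sim|\nu(p)|$ where $\nu$ is pinned between $\epsilon/2$ and $3\epsilon/2$, and finally run a clean ``all $U$-derivatives extend continuously by $0$ $\Rightarrow$ $C^\infty$ with $0$-jet on $Z$'' argument (C) using absolute continuity along coordinate lines. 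What the paper compresses into the phrase ``from $g_k=\nu g_{k+1}$ it follows that $g_k\in\mathcal C^{t+1}$'' is precisely your step~(A); your decomposition singles it out and proves it directly. The paper's approach buys brevity; yours buys a self-contained argument that exhibits the actual analytic content of the lemma, namely that $C^0$-divisibility of $\mu$ by all powers of $\nu$ propagates to every $\partial^\delta\mu$.

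One small remark on (C): your parenthetical ``on $K$ both sides vanish at every density point of $K$'' is slightly loose --- what you need (and what is true) is that $\Psi'=0$ \emph{a.e.} on $K$, which follows because $\Psi$ is Lipschitz with $\Psi(K)=\{0\}$; together with $h|_K=0$ this gives $\Psi'=h$ a.e.\ and the fundamental-theorem computation goes through as you wrote.
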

\begin{proof} Let $U:=\{\rpt\in\mathbb R^l\,|\,\nu(\rpt)\neq 0\}$, clearly $g_k$ is smooth on $U$ for each $k$. Suppose we have proved that for some $t\in\mathbb Z_{\geq 0}$ $g_k\in\mathcal C^t(\mathbb R^l)$ for all $k\in\mathbb Z_{\geq 1}$. Then from $g_k=\nu g_{k+1}$ it follows that $g_k\in\mathcal C^{t+1}(\mathbb R^n)$.\end{proof}%

We need to check continuity of $\frac{h\circ\Psi}{f^{2k}}$ at each $\rqt_0\in\Psi^{-1}(V)$. There are $s,t\in\mathbb Z_{\geq 0}$ s.t.\@ $W\cap(\closu{B_{s+1}\setminus B_{s-1}}\times\closu{B'_{t+1}\setminus B'_{t-1}})$ contains an open neighbourhood $U\ni\rqt_0$. For each $\rqt\in U$ we have $(f(\rqt))^2\geq\alpha_{s,t}(\chi(\Psi(\rqt)))$. Therefore $\forall k\in\mathbb N$ $0\leq\underset{\rqt\rightarrow\rqt_0}\lim\frac{\alpha_s(\chi(\Psi(\rqt)))}{(f(\rqt))^{2k}}\leq\underset{\rqt\rightarrow\rqt_0}\lim\frac{\alpha_s(\chi(\Psi(\rqt)))}{(\alpha_{s,t}(\chi(\Psi(\rqt))))^{k}}=0$. The same argument applies to the other possible summand of $h$, that is non-zero arbitrarily close to $\Psi(\rqt_0)$.

\section{Proof of Proposition \ref{GermsThroughNeighbourhoods}}\label{ProofGermsThroughNeighbourhoods}

Let $\cring$ be finitely generated and point-determined, and suppose $\exists a\notin\zerogermin{\cspace}{\cspace'}$ s.t.\@ $a\in\underset{\ideal\in\rinfinil{\cspace}{\cspace'}}\bigcap\ideal\leq\kernel{\phi}$. We would like to arrive at a contradiction.

{\bf 1.} Choose a surjective $\alpha\colon\cinfty(\mathbb R^n)\rightarrow\cring$, let $V\subseteq\mathbb R^n$ consist of common zeroes of $\kernel{\alpha}$. We choose generators $\{a_i\}_{i\in I}$ of $\kernel{\phi}$ as an ideal, and $\forall i\in I$ some $g_i\in\alpha^{-1}(a_i)$, let $f\in\alpha^{-1}(a)$. We define closed subsets of $\mathbb R^n$
	\begin{equation*}V_i:=\{v\in V\,|\,f(v)=0,\,g_i(v)= 0,\,\rgerm{(f|_V)}{v}\neq 0\}, \quad i\in I,\end{equation*} 
where $\rgerm{(f|_V)}{v}$ is the germ at $v$ of the restriction of $f$ to $V$.

\smallskip

{\bf 2.} We claim that $\forall i$ $V_i\neq\emptyset$. Indeed, if $V\cap\vset{f}{\rpt}\cap\vset{g_i}{\rpt}=\emptyset$, then $a^2+a_i^2\in\cring$ is invertible, implying $\igen{a}+\kernel{\phi}=\cring$, and then $\kernel{\phi}=\cring$ , since $a\in\kernel{\phi}$. This would contradict $a\notin\zerogermin{\cspace}{\cspace'}$. 

If $(f|_V)_v=0$ $\forall v\in V\cap\vset{f}{\rpt}\cap\vset{g_i}{\rpt}$, we can choose an open $U\subseteq\mathbb R^n$, s.t.\@ $f|_{U\cap V}=0$ and $V\cap\vset{g_i}{\rpt}\cap\vset{f}{\rpt}\subseteq U$. Choose an $h\in\cinfty(\mathbb R^n)$, s.t.\@ $h|_{\mathbb R^n\setminus U}=0$ and $h|_{U}>0$, we see that $f h\in\kernel{\alpha}$ and $h$ is invertible modulo $\igen{f,g_i}+\kernel{\alpha}$. Since $f,g_i\in\kernel{\phi\circ\alpha}$, $h$ is also invertible modulo $\kernel{\phi\circ\alpha}$, contradicting $a\notin\zerogermin{\cspace}{\cspace'}$.
 
 \smallskip
 
{\bf 3.} For each $i\in I$ we would like to find a $\overline{g}_i\in\cinfty(\mathbb R^n)$, such that $\overline{g}_i$ vanishes exactly where $g_i$ vanishes, and the upper bound of $\overline{g}_i$ decays at $V_i$ along $V$ faster than that of $f$. To make this precise we need the following definition. We denote by $\czero(\mathbb R^n)$ be the set of continuous functions.

\begin{definition}\label{UpperBoundVanishing} Let $f\in\czero(\mathbb R^n)$, let $\emptyset\neq W\subseteq V\subseteq\mathbb R^n$ be closed subsets. {\it The order of upper bound vanishing of $f$ at $W$ along $V$} is the sequence of (not necessarily continuous) functions $\{\upperrate[V]{W}{f}{k}\colon W\rightarrow\mathbb R_{\geq 0}\}_{k\in\mathbb N}$, where
	\begin{equation*}\forall w\in W,\quad\upperrate[V]{W}{f}{k}(w):=\underset{\rpt\in\ball{w}{\frac{1}{k}}\cap V}\max\; |f(\rpt)|,\end{equation*}
and $\ball{w}{\frac{1}{k}}\subseteq\mathbb R^n$ is the closed ball of radius $\frac{1}{k}$ centered at $w$.

Suppose $\forall k\in\mathbb N$ $\upperrate[V]{W}{f}{k}$ is no-where vanishing on $W$. We say that {\it $g\in\czero(\mathbb R^n)$ decays at $W$ along $V$ faster than $f$} and write $\decay{W,V}{g}{f}$, if
	\begin{equation*}\forall w\in W,\quad\frac{\upperrate[V]{W}{g}{k}(w)}{\upperrate[V]{W}{f}{k}(w)}\underset{k\rightarrow\infty}\longrightarrow 0.\end{equation*}
\end{definition}

The distance function $\distance{-}{W}\colon\mathbb R^n\rightarrow\mathbb R_{\geq 0}$ is continuous and vanishes exactly on $W$. The following Lemma implies that we can always find $\chi$ in $\cinfty(\mathbb R^n)$, vanishing exactly on $W$ and decaying there faster than $\distance{-}{W}$.

\begin{lemma}\label{SmoothingDistance} Let $\chi\in\cinfty(\mathbb R^n)$, $W:=\vset{\chi}{\rpt}$. Suppose that partial derivatives of $\chi$ vanish on $W$. Then $|\chi|\leq\distance{-}{W}$ on an open $U\supseteq W$.\end{lemma}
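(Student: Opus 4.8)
The plan is to deduce the estimate from the mean value theorem, using the hypothesis that the first-order partial derivatives of $\chi$ vanish on $W$ together with their continuity. Since $W$ need not be compact, I would not look for a single uniform tube around $W$ on which $|\nabla\chi|<1$; instead I would argue locally about each point of $W$ and take a union of small balls.

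First, for each $w\in W$, continuity of $\nabla\chi$ and $\nabla\chi(w)=0$ provide an $r_w>0$ with $|\nabla\chi(y)|<1$ whenever $\distance{y}{w}\leq r_w$. Set $U:=\bigcup_{w\in W}\{x\in\mathbb R^n\mid\distance{x}{w}<r_w/2\}$, which is open and contains $W$. Now take $x\in U$, say $\distance{x}{w}<r_w/2$ for some $w\in W$. Since $W$ is closed there is $w_0\in W$ with $\distance{x}{w_0}=\distance{x}{W}\leq\distance{x}{w}<r_w/2$. Then $\distance{w_0}{w}\leq\distance{w_0}{x}+\distance{x}{w}<r_w$, so by convexity of the closed ball $\ball{w}{r_w}$ the whole segment $[w_0,x]$ lies inside $\ball{w}{r_w}$, where $|\nabla\chi|<1$. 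Applying the mean value theorem to $t\mapsto\chi\bigl(w_0+t(x-w_0)\bigr)$ on $[0,1]$ and using $\chi(w_0)=0$ gives $\chi(x)=\nabla\chi(\xi)\cdot(x-w_0)$ for some $\xi\in[w_0,x]$, hence by Cauchy--Schwarz $|\chi(x)|\leq|\nabla\chi(\xi)|\cdot\distance{x}{w_0}<\distance{x}{W}$. Thus $|\chi|\leq\distance{-}{W}$ on $U$.

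The only point requiring care is the non-compactness of $W$: the open set $\{|\nabla\chi|<1\}$ may fail to contain a uniform neighbourhood of $W$, so one cannot simply fix a single tube. The localisation above sidesteps this, the crux being that the segment joining a point $x$ of $\ball{w}{r_w/2}$ to its nearest point of $W$ stays inside $\ball{w}{r_w}$, which holds precisely because that nearest point lies at distance $\distance{x}{W}\leq\distance{x}{w}<r_w/2$ from $x$.
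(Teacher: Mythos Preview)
Your argument is correct. The core idea---apply the mean value theorem on a segment from $x$ to a nearest point of $W$, using $|\nabla\chi|<1$ near $W$---is the same as the paper's, but the organisation differs. The paper argues by contradiction: assuming no such $U$ exists, it extracts a sequence $q_k\to p\in W$ with $|\chi(q_k)|>\distance{q_k}{W}$ (this uses the topological observation that if the closed set $\{|\chi|\le\distance{-}{W}\}$ fails to contain a neighbourhood of $W$, some point of $W$ lies in the closure of its complement), and then the mean value theorem produces directional derivatives of absolute value $>1$ arbitrarily close to $p$, contradicting $\nabla\chi(p)=0$. Your version is direct and constructive: you build $U$ as a union of half-radius balls and verify the estimate on each. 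Your half-radius trick is exactly what is needed to keep the segment $[w_0,x]$ inside a ball where $|\nabla\chi|<1$, and it makes the handling of non-compact $W$ completely explicit, whereas in the paper this is hidden in the passage to a convergent sequence.
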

\begin{proof} Suppose not. Then $\exists\{\rqt_k\}_{k\in\mathbb N}\subseteq\mathbb R^n\setminus W$ converging to $\rpt\in W$, s.t.\@ $\forall k$ $|\chi(\rqt_k)|>\distance{\rqt_k}{W}$. Applying the mean value theorem we get a sequence $\{\rqt'_k\}\underset{k\rightarrow\infty}\longrightarrow\rpt$, s.t.\@ at each $\rqt'_k$ $\chi$ has a directional derivative with absolute value $>1$. This implies that at least one partial derivative of $\chi$ has absolute value $>\frac{1}{n}$ on a sub-sequence of $\{\rqt'_k\}$, contradicting the assumptions.\end{proof}%

{\bf 4.} By definition $\forall i\in I\,\forall v\in V_i$ $\rgerm{(f|_V)}{v}\neq 0$. Therefore $\forall k\in\mathbb N,\forall v\in V_i$ $\upperrate[V]{V_i}{f}{k}(v)\neq 0$, and we can ask for $\overline{g}_i\in\cinfty(\mathbb R^n)$, s.t.\@ $\vset{\overline{g}_i}{\rpt}=\vset{g_i}{\rpt}$ and $\decay{V_i,V}{\overline{g}_i}{f}$. The following lemma is almost obvious.

\begin{lemma}\label{DecayOnCompact} In the situation of Definition \ref{UpperBoundVanishing} suppose $W$ is compact. Suppose that $\forall w\in W$ $\rgerm{(f|_V)}{w}\neq 0$. Then $\forall k\in\mathbb N$ $\underset{w\in W}{\inf}\;\upperrate[V]{W}{f}{k}(w)>0$.\end{lemma}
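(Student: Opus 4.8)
The plan is to prove the lemma by contradiction, using compactness of $W$ together with the germ hypothesis; the heart of the matter is turning the non-vanishing of $\rgerm{(f|_V)}{w}$ into a uniform lower bound. Fix $k\in\mathbb N$. As a preliminary remark I would note that $\upperrate[V]{W}{f}{k}(w)>0$ for every $w\in W$: the set $\ball{w}{\frac{1}{k}}\cap V$ is closed and bounded, hence compact, and non-empty since $w\in W\subseteq V$ lies in it, so the maximum of the continuous function $|f|$ over it is attained; and since $\rgerm{(f|_V)}{w}\neq 0$ the function $f|_V$ is not identically zero on any neighbourhood of $w$ in $V$, so in particular there is a point of $\ball{w}{\frac{1}{k}}\cap V$ where $|f|>0$, whence the maximum is positive. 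This gives positivity of each individual value; the content of the lemma is the uniformity over $W$.

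Now suppose for contradiction that $\underset{w\in W}{\inf}\;\upperrate[V]{W}{f}{k}(w)=0$. Then there is a sequence $\{w_j\}_{j\in\mathbb N}\subseteq W$ with $\upperrate[V]{W}{f}{k}(w_j)\to 0$, and by compactness of $W$ I may pass to a subsequence with $w_j\to w_\infty$ for some $w_\infty\in W$. Applying the germ condition $\rgerm{(f|_V)}{w_\infty}\neq 0$ to the \emph{open} ball of radius $\frac{1}{k}$ about $w_\infty$ produces a point $\rpt_\infty\in V$ with $f(\rpt_\infty)\neq 0$ and $|\rpt_\infty-w_\infty|<\frac{1}{k}$, the inequality being strict.

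The final step transports $\rpt_\infty$ into the balls centred at the $w_j$: since $|\rpt_\infty-w_j|\leq|\rpt_\infty-w_\infty|+|w_\infty-w_j|$ with $|\rpt_\infty-w_\infty|<\frac{1}{k}$ strictly, for all sufficiently large $j$ one still has $|\rpt_\infty-w_j|\leq\frac{1}{k}$, so $\rpt_\infty\in\ball{w_j}{\frac{1}{k}}\cap V$ and therefore $\upperrate[V]{W}{f}{k}(w_j)\geq|f(\rpt_\infty)|>0$ for all large $j$, contradicting $\upperrate[V]{W}{f}{k}(w_j)\to 0$. The only place that needs any care is the use of the strict inequality coming from the open ball when choosing $\rpt_\infty$ — it is exactly this slack that survives the perturbation from $w_\infty$ to $w_j$ and keeps $\rpt_\infty$ inside the closed ball $\ball{w_j}{\frac{1}{k}}$; beyond that point the argument is a routine compactness argument and I do not anticipate a genuine obstacle.
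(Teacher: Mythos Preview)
Your proof is correct and follows essentially the same route as the paper's: both argue by contradiction, extract a convergent subsequence $w_j\to w_\infty$ in the compact $W$, and use the non-vanishing germ at $w_\infty$ to produce a fixed positive lower bound that contradicts $\upperrate[V]{W}{f}{k}(w_j)\to 0$. The only cosmetic difference is that the paper picks the smaller ball $\ball{w_\infty}{\frac{1}{2k}}$ and uses the inclusion $\ball{w_\infty}{\frac{1}{2k}}\subseteq\ball{w_j}{\frac{1}{k}}$ for large $j$, whereas you pick a single witness point $\rpt_\infty$ in the \emph{open} $\frac{1}{k}$-ball and use the strict inequality to keep it inside $\ball{w_j}{\frac{1}{k}}$; both devices serve the same purpose of absorbing the perturbation $w_\infty\to w_j$.
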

\begin{proof} Let $k\in\mathbb N$, s.t.\@ the subset $\upperrate[V]{W}{f}{k}(W)\subseteq\mathbb R_{\geq 0}$ is infinite. Consider a strictly decreasing sequence $\{b_i\}_{i\in\mathbb N}\subseteq\upperrate[V]{W}{f}{k}(W)$. Choosing $\{w_i\}_{i\in\mathbb N}\subseteq W$, s.t.\@ $\upperrate[V]{W}{f}{k}(w_i)=b_i$, and using compactness of $W$, we can find a converging sub-sequence of $\{w_i\}_{i\in\mathbb N}$. We can assume that the sequence $\{w_i\}_{i\in\mathbb N}$ itself converges to $w\in W$. It is clear that $\underset{\rpt'\in B_{w,\frac{1}{2k}}\cap V}\max\;|f(\rpt')|\leq\underset{i\rightarrow\infty}\lim\,b_i$. Therefore, since $\rgerm{f}{w}\neq 0$, it must be that $\underset{i\rightarrow\infty}\lim\,b_i>0$.\end{proof}%

\begin{lemma}\label{SmoothUpperBound} In the situation of Definition \ref{UpperBoundVanishing} suppose $\forall w\in W$ $\rgerm{(f|_V)}{w}\neq 0$. Then $\exists g\in\cinfty(\mathbb R^n)$, s.t.\@ $\vset{g}{\rpt}=W$ and $\decay{W,V}{g}{f}$.\end{lemma}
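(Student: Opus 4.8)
The plan is to produce $g$ as a composite $g:=\Theta\circ\chi_0$, where $\chi_0\in\cinfty(\mathbb R^n)$ is a nonnegative function with $\vset{\chi_0}{\rpt}=W$ all of whose partial derivatives vanish on $W$ (a standard construction), and $\Theta\in\cinfty(\mathbb R)$ is a function extremely flat at the origin, produced by Proposition \ref{InfinitesimalBoisReymond} so as to outvanish the order of vanishing of $f$ along $V$ on every compact piece of $W$ simultaneously. First I would control $f$: since $\rgerm{(f|_V)}{w}\neq 0$ for every $w\in W$, every neighbourhood of $w$ in $V$ contains points where $f\neq 0$, so $\upperrate[V]{W}{f}{k}$ is nowhere vanishing on $W$ for each $k$ (this is exactly what makes $\decay{W,V}{g}{f}$ a meaningful assertion). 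Exhaust $\mathbb R^n$ by the closed balls $\closu{B_s}$ of radius $s\in\mathbb Z_{\geq 0}$ about the origin and set $W_s:=W\cap\closu{B_s}$, a compact subset of $W$ with $\bigcup_s W_s=W$; applying Lemma \ref{DecayOnCompact} to each nonempty $W_s$ (legitimate because $\rgerm{(f|_V)}{w}\neq 0$ on $W_s\subseteq W$) yields $\beta_{s,k}:=\underset{w\in W_s}{\inf}\,\upperrate[V]{W}{f}{k}(w)>0$ for all $s,k$ (set $\beta_{s,k}:=1$ if $W_s=\emptyset$), and these are weakly decreasing in $k$ because $\ball{w}{\frac1{k+1}}\subseteq\ball{w}{\frac1k}$.

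Then I would build $\Theta$. For each $s$, let $\alpha_s\colon\mathbb R_{\geq 0}\to\mathbb R_{\geq 0}$ be piecewise linear with $\alpha_s(\tfrac1k):=\min(\beta_{s,k},2^{-k})$, linear on each $[\tfrac1{k+1},\tfrac1k]$, equal to $\alpha_s(1)$ on $[1,\infty)$, and $\alpha_s(0):=0$. It is immediate that $\alpha_s$ is continuous, weakly monotonically increasing, and $\vset{\alpha_s}{x}=\{0\}$, while $\alpha_s(\tfrac1k)\leq\beta_{s,k}\leq\upperrate[V]{W}{f}{k}(w)$ for all $w\in W_s$ and all $k$. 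Feeding the sequence $\{\alpha_s\}_{s}$ to Proposition \ref{InfinitesimalBoisReymond} produces a nonnegative $\Theta\in\cinfty(\mathbb R)$ with $\vset{\Theta}{x}=\mathbb R_{\leq 0}$ and $\underset{x\to 0^+}\lim\,\Theta(x)/\alpha_s(x)=0$ for every $s$; moreover, inspecting the construction in that proof, $\Theta$ may be taken weakly monotonically increasing on $\mathbb R_{\geq 0}$, a fact I will use.

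Finally I would set $g:=\Theta\circ\chi_0\in\cinfty(\mathbb R^n)$. Since $\Theta$ vanishes precisely on $\mathbb R_{\leq 0}$ and $\chi_0\geq 0$, one gets $\vset{g}{\rpt}=\vset{\chi_0}{\rpt}=W$, and $g$ is smooth; by Lemma \ref{SmoothingDistance} there is an open $U\supseteq W$ with $\chi_0\leq\distance{-}{W}$ on $U$. Now fix $w\in W$ and pick $s$ with $w\in W_s$; by compactness of $W_s$ there is $k_0$ such that $\ball{w'}{\frac1k}\subseteq U$ for all $w'\in W_s$ once $k\geq k_0$, and then for any $\rqt\in\ball{w}{\frac1k}\cap V$ we have $0\leq\chi_0(\rqt)\leq\distance{\rqt}{W}\leq|\rqt-w|\leq\tfrac1k$, whence $g(\rqt)=\Theta(\chi_0(\rqt))\leq\Theta(\tfrac1k)$ by monotonicity of $\Theta$; thus $\upperrate[V]{W}{g}{k}(w)\leq\Theta(\tfrac1k)$ for $k\geq k_0$. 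Combining with the lower bound for $f$,
	\begin{equation*}\frac{\upperrate[V]{W}{g}{k}(w)}{\upperrate[V]{W}{f}{k}(w)}\leq\frac{\Theta(\tfrac1k)}{\alpha_s(\tfrac1k)}\underset{k\to\infty}\longrightarrow 0,\end{equation*}
and since $w\in W$ was arbitrary this is exactly $\decay{W,V}{g}{f}$. The one genuinely non-formal point is the middle step: converting the pointwise hypothesis $\rgerm{(f|_V)}{w}\neq 0$ into a single smooth $\Theta$ dominating the vanishing order of $f$ uniformly on every $W_s$ — the familiar du Bois-Reymond diagonalization, which rests on Lemma \ref{DecayOnCompact} together with Proposition \ref{InfinitesimalBoisReymond}. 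After that, the composite $g=\Theta\circ\chi_0$ and its verification are routine, the only external ingredient being the standard existence of a smooth function flat on, and vanishing exactly on, the closed set $W$.
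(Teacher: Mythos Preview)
Your proof is correct and follows essentially the same route as the paper's: exhaust $W$ by compacts, use Lemma \ref{DecayOnCompact} to get positive lower bounds for $\upperrate[V]{W}{f}{k}$ on each piece, feed these into Proposition \ref{InfinitesimalBoisReymond} to produce a monotone $\Theta\in\cinfty(\mathbb R)$ vanishing faster at $0$, and compose with a smooth function flat on $W$. The only cosmetic differences are that the paper uses annular shells $W\cap(\closu{B_{0,j+1}}\setminus\overset{\circ}{B}_{0,j-1})$ instead of your nested balls $W\cap\closu{B_s}$, multiplies by $x$ rather than taking $\min(\,\cdot\,,2^{-k})$ to force vanishing at $0$, and writes its flat function explicitly as $e^{-\chi^{-2}}$; your invocation of compactness of $W_s$ to get $\ball{w'}{1/k}\subseteq U$ uniformly is slightly more than needed (you only use it at the fixed $w$), but harmless.
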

\begin{proof} Given $j\in\mathbb N$ let $W_j:=W\cap(B_{0,j+1}\setminus\overset{\circ}{B}_{0,j-1})$, where $\overset{\circ}{B}_{0,j-1}$ is the open ball of radius $j-1$ centered at the origin. Clearly $W=\underset{j\in\mathbb N}\bigcup\,W_j$. For every $j\in\mathbb N$ we define a piece-wise linear function $h_j\colon\mathbb R_{\geq 0}\rightarrow\mathbb R_{\geq 0}$ as follows
	\begin{equation*}\forall k\in\mathbb N,\quad h_j(\frac{1}{k})=\underset{w\in W_j}\inf\,\upperrate[V]{W}{f}{k}(w)
	\text{ and }h_j(d)=h_j(1)\text{ for }d>1.\end{equation*}
It is easy to see that each $h_j$ is weakly monotonically increasing. From Lemma \ref{DecayOnCompact} we see that $h_j|_{\mathbb R_{>0}}\neq 0$. Applying Lemma \ref{InfinitesimalBoisReymond} to $\{x h_j\}_{j\in\mathbb N}$, we get a weakly monotonically increasing $h\in\cinfty(\mathbb R)$ vanishing exactly on $\mathbb R_{\leq 0}$ and decaying at $0$ faster than each $h_j$. Choose $\chi\in\cinfty(\mathbb R^n)$ that vanishes exactly on $W$, then so does $e^{-\chi^{-2}}$ together with its derivatives, thus (Lemma \ref{SmoothingDistance}) $e^{-\chi^{-2}}\leq\distance{-}{W}$ in a neighbourhood of $W$. We define $g:=h\circ e^{-\chi^{-2}}$.

Let $w\in W$, then $\exists j\in\mathbb N$ s.t.\@ $w\in W_j$. For $k\in\mathbb N$ large enough we have $\rpt\in\ball{w}{\frac{1}{k}}\Rightarrow\distance{\rpt}{W}\geq e^{-\chi^{-2}}(\rpt)$. Then
	\begin{equation*}\frac{\upperrate[V]{W}{g}{k}(w)}{\upperrate[V]{W}{f}{k}(w)}\leq
	\frac{\upperrate[V]{W}{g}{k}(w)}{h_j(\frac{1}{k})}\leq\frac{h(\frac{1}{k})}{h_j(\frac{1}{k})}\underset{k\rightarrow\infty}{\longrightarrow}0.\end{equation*}
\end{proof}%

Applying Lemma \ref{SmoothUpperBound} to $f$, $V_i$ and $V$ we can find $\widetilde{g}_i\in\cinfty(\mathbb R^n)$ s.t.\@ $\vset{\overline{g}_i}{\rpt}=V_i$ and $\decay{V_i,V}{\widetilde{g}_i}{f}$. Then $\overline{g}_i:=g_i\widetilde{g}_i$.

\smallskip

{\bf 5.} Consider the ideal $\ideal:=\kernel{\alpha}+\underset{i\in I}\sum\igen{\overline{g}_i}$. Clearly $\iradical{\ideal}=\iradical{\kernel{\phi\circ\alpha}}$, but $f\notin\ideal\supseteq\underset{\ideal\in\rinfinil{\cspace}{\cspace'}}\bigcap\ideal$. Indeed, suppose $f=c+\underset{1\leq j\leq m}\sum c_j\overline{g}_j$, where $c\in\kernel{\alpha}$, $c_j\in\cinfty(\mathbb R^n)$. Since $\forall j$ $f|_V$ decays at $V_j$ faster than $\overline{g}_j$, and $c\equiv 0$ on $V$, the following lemma implies that $\underset{1\leq j\leq m}\bigcap V_j=\emptyset$. 

\begin{lemma} Let $f\in\czero(\mathbb R^n)$, $\emptyset\neq W\subseteq V\subseteq\mathbb R^n$ closed. Suppose $\forall w\in W$ $\rgerm{(f|_V)}{w}\neq 0$, and let $\{g_i\}_{1\leq i\leq k}\subseteq\czero(\mathbb R^n)$, s.t. $\forall i$ $\decay{W,V}{g_i}{f}$. Then $\forall h_i\in\czero(\mathbb R^n)$
	\begin{equation*}\decay{W,V}{\underset{1\leq i\leq k}\sum h_i g_i}{f}.\end{equation*}\end{lemma}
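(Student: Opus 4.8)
The plan is to reduce the statement to a pointwise estimate at each $w\in W$; write $H:=\sum_{i=1}^{k}h_ig_i$. The one obstacle is the possible global unboundedness of the $h_i$, but the quantities $\upperrate[V]{W}{-}{\ell}(w)$ only see the balls $\ball{w}{1/\ell}$, and all of these lie inside $\ball{w}{1}$ once $\ell\ge 1$, where the continuous $h_i$ are bounded; so a purely local bound suffices.

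First I would fix $w\in W$ and set $M:=1+\max_{1\le i\le k}\;\max_{\rpt\in\ball{w}{1}\cap V}|h_i(\rpt)|$, which is finite since each $h_i$ is continuous and $\ball{w}{1}\cap V$ is compact and contains $w$. Then for every $\ell\in\mathbb N$ and every $\rpt\in\ball{w}{1/\ell}\cap V\subseteq\ball{w}{1}\cap V$ one has
\begin{equation*}
|H(\rpt)|\le\sum_{i=1}^{k}|h_i(\rpt)|\,|g_i(\rpt)|\le M\sum_{i=1}^{k}|g_i(\rpt)|\le M\sum_{i=1}^{k}\upperrate[V]{W}{g_i}{\ell}(w),
\end{equation*}
the last step because $|g_i(\rpt)|\le\max_{\rpt'\in\ball{w}{1/\ell}\cap V}|g_i(\rpt')|=\upperrate[V]{W}{g_i}{\ell}(w)$. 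Taking the maximum over $\rpt\in\ball{w}{1/\ell}\cap V$ gives $\upperrate[V]{W}{H}{\ell}(w)\le M\sum_{i=1}^{k}\upperrate[V]{W}{g_i}{\ell}(w)$.

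Finally I would divide by $\upperrate[V]{W}{f}{\ell}(w)$, which is strictly positive for every $\ell$ because the hypothesis $\rgerm{(f|_V)}{w}\neq 0$ forces $f|_V$ not to vanish identically on $\ball{w}{1/\ell}\cap V$; this produces
\begin{equation*}
\frac{\upperrate[V]{W}{H}{\ell}(w)}{\upperrate[V]{W}{f}{\ell}(w)}\le M\sum_{i=1}^{k}\frac{\upperrate[V]{W}{g_i}{\ell}(w)}{\upperrate[V]{W}{f}{\ell}(w)}\underset{\ell\rightarrow\infty}{\longrightarrow}0,
\end{equation*}
since each of the finitely many summands tends to $0$ by $\decay{W,V}{g_i}{f}$ while $M$ is independent of $\ell$. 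As $w\in W$ was arbitrary, $\decay{W,V}{H}{f}$. The argument is essentially routine; the only point needing care is the localisation to $\ball{w}{1}\cap V$ that neutralises the unboundedness of the $h_i$.
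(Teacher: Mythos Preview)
Your proof is correct and follows essentially the same approach as the paper: both arguments bound $|h_i|$ on $\ball{w}{1/\ell}\cap V$ by its supremum on $\ball{w}{1}\cap V$ (the paper phrases this as ``$\upperrate[V]{W}{h_i}{k}(w)$ is weakly monotonically decreasing in $k$'') and then use that $\upperrate[V]{W}{\sum h_ig_i}{\ell}(w)$ is dominated by a fixed constant times $\sum_i\upperrate[V]{W}{g_i}{\ell}(w)$. Your version is slightly more explicit about why the local bound $M$ is finite and why $\upperrate[V]{W}{f}{\ell}(w)>0$, but the substance is identical.
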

\begin{proof} Clearly for any $k\in\mathbb Z_{\geq 1}$ and any $w\in W$
	\begin{equation*}\upperrate[V]{W}{\underset{1\leq i\leq k}\sum h_i g_i}{k}(w)\leq
	\underset{1\leq i\leq k}\sum\upperrate[V]{W}{h_i}{k}(w)\upperrate[V]{W}{g_i}{k}(w).\end{equation*}
Dividing by $\upperrate[V]{W}{f}{k}$ and recalling that each $\upperrate[V]{W}{h_i}{k}(w)$ is a weakly monotonically decreasing function of $k$ we are done.\end{proof}%
Now $\underset{1\leq j\leq m}\bigcap V_j=\emptyset$ means $f|_V$ has $0$ germ at $V\cap \underset{1\leq j\leq m}\bigcap\vset{g_j}{\rpt}$, which in turn implies existence of $h\in\cinfty(\mathbb R^n)$, s.t.\@ $f h\in\kernel{\alpha}$ and $h$ is invertible modulo $\kernel{\alpha}+\underset{1\leq j\leq m}\sum\igen{g_j}$, contradicting our choice of $a$.

\end{document}